\newtheorem{thm}[subsection]{Theorem}
\newtheorem{lem}[subsection]{Lemma}
\newtheorem{conj}[subsection]{Conjecture}
\newtheorem{prop}[subsection]{Proposition}
\newtheorem{cor}[subsection]{Corollary}
{
\theoremstyle{definition}

}
\definecolor{gold}{RGB}{255,215,0}
\newenvironment{rem}
{\pushQED{\qed}\remx}
{\popQED\endremx}
\newenvironment{defn}
{\pushQED{\qed}\defnx}
{\popQED\enddefnx}
\newenvironment{example}
{\pushQED{\qed}\examplex}
{\popQED\endexamplex}
\newcommand{\ZZ}{\mathbb Z}
\newcommand{\RR}{\mathbb R}
\newcommand{\CC}{\mathbb C}
\newcommand{\supp}{\mathrm{supp}}
\newcommand{\Newton}{\mathrm{Newton}}
\newcommand{\wt}{\mathrm{wt}}
\newcommand{\BD}{\mathcal{BD}}
\newcommand{\SBD}{\mathcal{SBD}}
\renewcommand{\top}{\mathrm{top}}
\newcommand{\BPD}{\mathrm{BPD}}
\newcommand{\RBPD}{\mathrm{RBPD}}
\newcommand{\MBPD}{\mathrm{MBPD}}
\newcommand{\SM}{\mathrm{SM}}
\newcommand{\GL}{\mathrm{GL}}
\newcommand{\word}{\mathrm{word}}
\begin{document}
\title{M-convexity of Grothendieck polynomials via bubbling}
\author{Elena S.~Hafner}

\address{Elena S.~Hafner, Department of Mathematics, Cornell University, Ithaca, NY 14853. \newline\textup{esh83@cornell.edu}
}

\author{Karola M\'esz\'aros}

\address{Karola M\'esz\'aros, Department of Mathematics, Cornell University, Ithaca, NY 14853. \newline\textup{karola@math.cornell.edu}
}

\author{Linus Setiabrata}
\address{Linus Setiabrata, Department of Mathematics, University of Chicago, Chicago, IL, 60637. \newline\textup{linus@math.uchicago.edu}
}

\author{Avery St.~Dizier}
\address{Avery St.~Dizier, Department of Mathematics, University of Illinois at Urbana-Champaign, Urbana, IL 61801. \newline\textup{stdizie2@illinois.edu}
}
\thanks{Elena S.~Hafner, Karola M\'esz\'aros  and Avery St.~Dizier received support from CAREER NSF Grant DMS-1847284. Avery St.~Dizier also  received support from NSF Grant DMS-2002079.}

\begin{abstract}
We introduce bubbling diagrams and show that they compute the support of the Grothendieck polynomial of any vexillary permutation. Using these diagrams, we show that the support of the top homogeneous component of such a Grothendieck polynomial coincides with the support of the dual character of an explicit flagged Weyl module. We also show that the homogenized Grothendieck polynomial of a vexillary permutation has M-convex support. 
\end{abstract}

\maketitle

\section{Introduction}

Grothendieck polynomials $\mathfrak G_w$ are multivariate polynomials associated to permutations $w \in S_n$. Grothendieck polynomials were introduced by Lascoux and Sch\"utzenberger \cite{ls1982} as representatives of the classes of Schubert varieties in the $K$-theory of the flag manifold. They generalize Schubert polynomials, which in turn generalize the classical Schur polynomials, a well-known basis of the ring of symmetric functions.

There has been a flurry of research on the support of Grothendieck polynomials as well as the distribution of their coefficients within their support \cites{hmms2022,weigandt2021,psw2021,hafner2022,mssd2022,ps2022,ccmm2022}. With Huh and Matherne, the second and fourth author conjectured that homogenized Grothendieck polynomials are Lorentzian (up to appropriate normalization). In particular, this conjecture would imply that their support is \emph{M-convex}, equivalently the set of integer points in a generalized permutahedron. That the support is the set of integer points of a convex polytope was previously conjectured by Monical, Tokcan, and Yong in \cite{mty2019}.

To date, it is known that homogenized Grothendieck polynomials are M-convex for several families of permutations. These include permutations of the form $1\pi$ with $\pi$ dominant on $\{2,3 \ldots, n\}$ (\cite{ms2020}), Grassmannian permutations (\cite{ey2017}), and permutations whose Schubert polynomial has all nonzero coefficients equal to $1$ (\cite{ccmm2022}). In the present paper, we prove M-convexity for homogenized Grothendieck polynomials of vexillary permutations. 

Our inspiration is the proof of the analogous result for all Schubert polynomials \cite{fms2018}. The latter relies heavily on the theory of dual Weyl characters which has no K-theoretic counterpart. Mimicking the dual Weyl character approach, we introduce \emph{bubbling diagrams}, which are diagrams (subsets of the $n\times k$ grid) endowed with additional data affecting the legality of certain local transformations (Definitions~\ref{defn:bubbling-move} and~\ref{defn:K-bubbling-move}). These diagrams also bear strong similarities with \emph{ghost diagrams} \cite{ry2015}. We show that bubbling diagrams compute the support of any vexillary Grothendieck polynomial:
\begin{thm}
	\label{thm:main1}
	If $w \in S_n$ is a vexillary permutation, then $\supp(\mathfrak G_w) = \{\wt(\mathcal D)\colon \mathcal D \in \BD(w)\}$.
\end{thm}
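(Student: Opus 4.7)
The plan is to prove both inclusions $\supp(\mathfrak{G}_w) \supseteq \{\wt(\mathcal{D}) : \mathcal{D} \in \BD(w)\}$ and $\supp(\mathfrak{G}_w) \subseteq \{\wt(\mathcal{D}) : \mathcal{D} \in \BD(w)\}$ separately, bridging bubbling diagrams with a known combinatorial model for Grothendieck polynomials of vexillary permutations. Natural candidates are the K-theoretic pipe dream formula of Fomin--Kirillov and Lenart or the flagged set-valued tableau formula of Knutson--Miller--Yong; both admit especially clean descriptions in the vexillary case because the Rothe diagram $D(w)$ is, after reordering rows, a Young diagram and the essential set sits on a single antidiagonal staircase.

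For the inclusion $\supseteq$, I would construct an initial bubbling diagram $\mathcal{D}_0 \in \BD(w)$ whose weight corresponds to the bottom K-pipe dream of $w$, so that $\wt(\mathcal{D}_0) \in \supp(\mathfrak{G}_w)$. I would then show that every elementary bubbling move (both the plain and the K-theoretic variants) mirrors a ladder move or K-ladder move on K-pipe dreams: a local rearrangement that preserves membership in the K-pipe dream complex of $w$, and hence in $\supp(\mathfrak{G}_w)$. Combined with a reachability statement saying that every $\mathcal{D} \in \BD(w)$ is obtainable from $\mathcal{D}_0$ by a sequence of legal moves, this gives the desired containment.

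For $\subseteq$, I would exhibit a weight-preserving map from the chosen combinatorial model to $\BD(w)$. The vexillary hypothesis is what makes this step tractable: the flag constraints encoded in the prohibited-transformations clauses of a bubbling diagram align naturally with the flag bounds of the tableau (or pipe dream) model, since the essential set lies on one staircase. The bulk of the work in this direction is verifying that the image of this map always satisfies the legality clauses defining bubbling diagrams, which should follow from a case analysis on how a K-theoretic crossing can sit inside the vexillary shape.

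The main obstacle I anticipate is the K-theoretic layer. The analogous theorem for Schubert polynomials of Fink--M\'esz\'aros--St. Dizier is controlled by dual Weyl character theory, which has no K-theoretic counterpart, so the K-bubbling moves must be handled by direct combinatorics. These moves must simultaneously generate every non-reduced K-pipe dream contributing to $\mathfrak{G}_w$ while not overgenerating; verifying the latter is likely accomplished by induction on the number of ``extra'' (non-reduced) crossings, organized by the vexillary essential set, and will be the most delicate part of the argument.
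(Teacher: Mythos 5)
Your outline shares the paper's high-level strategy --- prove both inclusions by setting up weight-preserving correspondences between bubbling diagrams and a known combinatorial model --- but it defers every substantive step to a ``would'' statement, and the deferred steps are exactly where the content of the theorem lives. The paper works with Weigandt's marked bumpless pipe dreams rather than Fomin--Kirillov/Lenart pipe dreams or set-valued tableaux, and the reason matters: for vexillary $w$ one has an ``organized local moves'' lemma (every $P\in\BPD(w)$ is reachable from the Rothe BPD by positioning pipes one at a time), and this pipe-by-pipe structure is what drives the induction. In each direction the proof constructs, pipe by pipe, a bubbling diagram $\mathcal D^m$ and a column-relabelling bijection $f_m$ (resp.\ $g_m$), and the verification that the bubbling and K-bubbling moves performed are \emph{legal} --- i.e.\ that the rank conditions and the no-two-dead-squares-of-equal-rank condition are satisfied --- rests on structural lemmas about Rothe diagrams of $2143$-avoiding permutations (notably Lemma~\ref{lem:vexillary-diagram-column-copies}) and on the canonical bubbling sequence machinery of Lemmas~\ref{lem:canonical-bubbling-sequence} and~\ref{lem:BD-characterization}. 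None of this appears in your proposal.

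Concretely, the gap is your assertion that ``every elementary bubbling move \dots mirrors a ladder move or K-ladder move on K-pipe dreams.'' That correspondence is not something one checks after the fact; constructing it is the theorem. A single bubbling or K-bubbling move changes the weight in one row of one column, whereas a (K-)ladder move on an FK pipe dream moves a crossing between rows in a way that depends on the global pipe configuration, and there is no given dictionary matching columns of the diagram to columns of the pipe dream --- in the paper this dictionary is the nontrivial bijection $f_m$, which must be re-permuted at every inductive step precisely because the displaced squares migrate between columns. Likewise, your reachability claim for the $\supseteq$ direction would require a connectivity result for non-reduced K-pipe dreams under K-ladder moves that you neither cite nor prove, and the legality of the K-bubbling moves (which is blocked by dead squares of equal rank in the same row) has no stated counterpart on the pipe-dream side. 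As written, the proposal is a research plan rather than a proof; the delicate parts you correctly identify as ``the most delicate part of the argument'' are left entirely unexecuted.
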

We also provide a much simpler subset $\SBD(w)\subseteq \BD(w)$ in Definition \ref{defn:sbd} which still realizes the conclusion of Theorem \ref{thm:main1}.
 
From the characterization of $\supp(\mathfrak G_w)$ afforded by Theorem~\ref{thm:main1}, we derive two interesting consequences. For a diagram $D$, let $\chi_D$ denote the dual character of the flagged Weyl module of $D$ (See Section~\ref{sec:background} for definitions). Denote the top degree component of $\mathfrak G_w$ by $\mathfrak G_w^\top$.

\begin{thm}
\label{thm:main2}
Let $w \in S_n$ be a vexillary permutation. There is a diagram $D^\top(w)$ such that $\supp(\mathfrak G_w^\top) = \supp(\chi_{D^\top(w)})$.
\end{thm}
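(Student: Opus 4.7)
The plan is to combine Theorem~\ref{thm:main1} with a careful identification of the maximum-size bubbling diagrams and then match them to a known combinatorial model for $\supp(\chi_D)$.

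First, I would observe that the top homogeneous component $\mathfrak G_w^\top$ is supported on exactly those weights $\wt(\mathcal D)$ for $\mathcal D \in \BD(w)$ achieving the maximum box count among all bubbling diagrams of $w$; call this subset $\BD^\top(w)$. By Theorem~\ref{thm:main1}, this gives $\supp(\mathfrak G_w^\top) = \{\wt(\mathcal D) : \mathcal D \in \BD^\top(w)\}$, and in fact the simpler set $\SBD(w) \cap \BD^\top(w)$ already suffices.

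Second, I would construct a canonical top bubbling diagram $\mathcal D^\top(w)$ by starting from the Rothe diagram of $w$ and applying K-bubbling moves greedily until none remain, pushing boxes as far down-right as the vexillary structure permits. Define $D^\top(w)$ to be the underlying diagram of $\mathcal D^\top(w)$, forgetting the bubbling data. The vexillary hypothesis is essential here: the Rothe diagram is (up to relabeling of rows) of skew shape, which guarantees that the saturation procedure terminates at a well-defined diagram and that the resulting $D^\top(w)$ has a flagged Weyl module whose dual character admits a clean combinatorial description.

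Third, I would establish the equality $\{\wt(\mathcal D) : \mathcal D \in \BD^\top(w)\} = \supp(\chi_{D^\top(w)})$. The support of $\chi_{D^\top(w)}$ is generated from $D^\top(w)$ by Magyar's ``northward'' box moves (equivalently, it is the set of content vectors of flagged column-strict tableaux of shape $D^\top(w)$). I expect that when restricted to $\BD^\top(w)$ the ordinary bubbling moves of Definition~\ref{defn:bubbling-move} correspond bijectively to Magyar's moves on $D^\top(w)$: once every K-bubbling has been performed, the only remaining freedom is in the positions of individual boxes, and a bubbling move shifts a box from one row to another in exactly the way a Magyar move does. Setting up this correspondence --- a two-way translation between bubbling moves on top bubbling diagrams and the moves generating $\supp(\chi_{D^\top(w)})$ --- is the main obstacle. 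The difficulty lies in verifying that the bubbling-diagram data (which encodes legality conditions coming from the K-theoretic relations) translates precisely into the flagging of $D^\top(w)$ used to define $\chi_{D^\top(w)}$; this will require a delicate case analysis leveraging the skew-shape description of the vexillary Rothe diagram and the structure of $\SBD(w)$ to reduce to a manageable set of local configurations.
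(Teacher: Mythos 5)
Your skeleton matches the paper's: restrict to the maximum-size bubbling diagrams, build a canonical ``top'' diagram $D^\top(w)$, and identify the maximal weights with $\{\wt(C)\colon C\leq D^\top(w)\}$, which is $\supp(\chi_{D^\top(w)})$ by Proposition~\ref{prop:support-of-chi} (your flagged-tableau/box-raising description of $\supp(\chi_D)$ is an equivalent formulation). However, the two steps you defer are exactly where the content lies, and as stated they have genuine gaps.

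First, ``apply K-bubbling moves greedily until none remain'' is not well defined and does not produce a canonical diagram: K-bubbling is obstructed by the rank conditions (a square cannot be K-bubbled if a dead square of equal rank sits in its row, and linked squares interfere), so different greedy orders terminate at genuinely different dead-square configurations, and distinct maximal diagrams need not share a dead-square set. The paper resolves this by selecting a distinguished set $A(w)$ of live squares --- one per column and one per linking class, chosen via the $\prec$ order --- and K-bubbling only those; the resulting dead set $F^\top(w)$ is then shown to \emph{dominate} all others in the strong sense of Theorem~\ref{thm:top-diagrams}: every $\mathcal D\in\BD(w)$ has its weight dividing the weight of some diagram with dead set exactly $F^\top(w)$. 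This confluence statement is the hard part (it occupies Lemmas~\ref{lem:vexillary-diagram-semi-containment-1}--\ref{lem:vexillary-diagram-semi-containment-3} and a lengthy column-surgery argument), and your proposal assumes it implicitly when you speak of ``the'' saturation.

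Second, for the reverse containment you need that \emph{every} $C\leq D^\top(w)$ is realizable as a bubbling diagram in $\BD(w)$, not merely that individual bubbling moves look like box-raising moves. This requires the non-recursive admissibility criterion (Lemma~\ref{lem:BD-characterization}) together with the structural fact that every dead square of $D^\top(w)$ has the entire column above it filled, so that raising live squares never violates the legality conditions. Your proposed ``two-way translation between moves'' would, even if carried out, only show closure under single moves starting from $D^\top(w)$; it would not by itself rule out maximal bubbling diagrams lying outside $\{C\colon C\leq D^\top(w)\}$, which is again Theorem~\ref{thm:top-diagrams}. In short: same architecture as the paper, but the canonical choice of which squares to K-bubble and the domination theorem for $F^\top(w)$ are missing, and these are the theorem's real content.
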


For vexillary permutations, Theorem~\ref{thm:main2} implies that the \emph{Rajchgot polynomials} of \cite{psw2021} are dual characters of flagged Weyl modules. Consequently, their Newton polytopes are \emph{Schubitopes}, a subclass of generalized permutahedra introduced in \cite{mty2019} whose defining inequalities are derived from a diagram.

In a recent work, Pan and Yu \cite{PanYu23} also construct a diagram whose weight is the leading monomial of $\mathfrak G_w^\top$.  In general, their diagram is distinct from $D^\top(w)$; in particular, the dual character of their diagram does not have the same support as $\mathfrak G_w^\top$ for vexillary permutations. Tianyi Yu communicated to us that   the results of \cite{PanYu23} along with those in  \cite{Yu23} can be used to show that $\supp(\mathfrak G_w^\top)$ is the set of integer points of a Schubitope.

\begin{thm}
\label{thm:main3}
Let $w\in S_n$ be a vexillary permutation. Then the homogenized Grothendieck polynomial $\widetilde{\mathfrak G}_w$ has M-convex support. In particular, each homogeneous component of $\mathfrak G_w$ has M-convex support. 
\end{thm}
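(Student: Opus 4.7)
The plan is to combine Theorem~\ref{thm:main1} (bubbling diagrams compute $\supp(\mathfrak G_w)$) with Theorem~\ref{thm:main2} (the top component is controlled by the diagram $D^\top(w)$) and leverage the known M-convexity of dual characters of flagged Weyl modules (Fink--M\'esz\'aros--St.~Dizier). The main task is to show that $\supp(\widetilde{\mathfrak G}_w)$, viewed as a subset of $\ZZ_{\geq 0}^{n+1}$ after introducing the homogenizing variable $t$, satisfies the symmetric exchange axiom of M-convexity. Since all points of the homogenized support lie on the hyperplane of coordinate-sum equal to $\deg(\mathfrak G_w)$, this is equivalent to showing the projected set $\supp(\mathfrak G_w)$ is M$^\natural$-convex, i.e.\ for any $\alpha,\beta\in\supp(\mathfrak G_w)$ and any coordinate $i$ with $\alpha_i>\beta_i$, there exists either (a) an index $j$ with $\alpha_j<\beta_j$ such that $\alpha-e_i+e_j$ and $\beta+e_i-e_j$ both lie in $\supp(\mathfrak G_w)$, or (b) indices with $\alpha-e_i$ and $\beta+e_i$ both in $\supp(\mathfrak G_w)$.

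My first step is to translate the exchange axiom into a statement about diagrams via Theorem~\ref{thm:main1}, using the simpler family $\SBD(w)$ to cut down the casework. Given representatives $\mathcal D_\alpha,\mathcal D_\beta\in\SBD(w)$ with $\wt(\mathcal D_\alpha)=\alpha$ and $\wt(\mathcal D_\beta)=\beta$, I would look for a local move on $\mathcal D_\alpha$ that decrements row $i$ of its weight and a matching move on $\mathcal D_\beta$ that increments row $i$. Ordinary bubbling moves (Definition~\ref{defn:bubbling-move}) are designed to relocate a cell between rows, so they will supply exchanges of type (a); K-bubbling moves (Definition~\ref{defn:K-bubbling-move}), which relate diagrams of different degrees, will supply the exchanges of type (b) that account for the homogenizing variable. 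The scheme is to imitate the exchange argument of \cite{fms2018} for dual characters of diagrams, but performed in the larger world of bubbling diagrams, using the extra flexibility K-bubbling moves provide to ``absorb'' the degree discrepancy between $\alpha$ and $\beta$.

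For the top-degree case, where $|\alpha|=|\beta|=\deg(\mathfrak G_w^\top)$, Theorem~\ref{thm:main2} identifies $\supp(\mathfrak G_w^\top)$ with $\supp(\chi_{D^\top(w)})$, which is M-convex by \cite{fms2018}, giving the base case of exchange (a). I would then induct downward on total degree: given $\alpha,\beta$ of possibly different sizes, use K-bubbling moves to produce witnesses in $\SBD(w)$ certifying membership of $\alpha-e_i$ and $\beta+e_i$ (or $\alpha-e_i+e_j$ and $\beta+e_i-e_j$) in the support, by choosing a K-bubbling configuration in $\mathcal D_\alpha$ at row $i$ that can be ``uncovered'' and a reciprocal move in $\mathcal D_\beta$ that adds a cell in row $i$. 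Once both type (a) and type (b) exchanges are produced, M$^\natural$-convexity of $\supp(\mathfrak G_w)$ follows, and hence M-convexity of $\supp(\widetilde{\mathfrak G}_w)$; the M-convexity of each homogeneous component $\mathfrak G_w^{(k)}$ drops out by restricting to a fixed coordinate-sum slice of the resulting generalized permutahedron.

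The main obstacle will be Step 2: establishing the simultaneous exchange for a matched pair of diagrams $\mathcal D_\alpha,\mathcal D_\beta$. In the Schubert case of \cite{fms2018}, one argues by tracing paths of cells along a suitable poset of diagrams; here the presence of K-bubbling moves enlarges the space of admissible local modifications but also introduces legality constraints (the ``additional data'' alluded to in the introduction) that can block moves which would otherwise be available. Handling these constraints cleanly for vexillary $w$ will require exploiting their special combinatorial structure, likely via the flag data encoded in $D^\top(w)$ and the compatibility of bubbling moves with this flag, to guarantee that for any blocked move on $\mathcal D_\alpha$ there is a bypass through an alternative K-bubbling configuration.
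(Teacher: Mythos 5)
Your plan has a genuine gap at precisely the point you flag as ``the main obstacle'': the simultaneous exchange for a matched pair of diagrams $\mathcal D_\alpha,\mathcal D_\beta$ is never constructed, and it is not a routine adaptation of \cite{fms2018}. Note first that \cite{fms2018} does not prove M-convexity of $\supp(\chi_D)$ by tracing exchange moves on diagrams; it writes the Schubitope as a Minkowski sum of Schubert matroid polytopes and invokes polymatroid theory. A direct exchange argument in $\BD(w)$ faces real difficulties: a single bubbling move changes the (homogenized) weight by $e_{i-1}-e_i$ and a K-bubbling move by $e_{i-1}-e_{n+1}$ (the dead square left behind still contributes to $\wt$), so individual moves only connect a diagram to its neighbors, whereas the exchange axiom demands, for \emph{arbitrary} $\alpha,\beta$ in the support with $\alpha_i>\beta_i$, a coordinated pair of modifications whose legality is obstructed by the rank and linking constraints governing K-bubbling. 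Your proposed downward induction on degree does not obviously supply these witnesses either, since M-convexity of the top slice says nothing about exchanges between points in different slices. Without a concrete mechanism producing the matched moves, the argument does not close.

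For comparison, the paper avoids the exchange axiom altogether. It first reduces to the subfamily $\SBD(w)$ (Theorem~\ref{thm:supp-Gw-SBD}), whose key feature is that, once the distinguished live squares $A(w)$ are fixed, the admissible states of each column are independent of the other columns. This yields a Minkowski-sum decomposition of $\supp(\widetilde{\mathfrak G}_w)$ into one-column pieces (Lemma~\ref{lem:SBD-explicit}). Each one-column summand is then shown to be the full set of lattice points of a generalized permutahedron in $\RR^{n+1}$ by writing down an explicit submodular function in terms of Schubert matroid rank functions (Proposition~\ref{prop:one-column-M-convexity}, resting on Corollary~\ref{cor:rank-functions-explicit}), and M-convexity of the sum follows from the integer decomposition property of sums of integral polymatroids, \cite{schrijver2003}*{Corollary 46.2c}. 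If you want to salvage your approach, the missing ingredient is exactly this column decoupling: with it, the exchange property only needs to be verified one column at a time, which is what the paper's one-column proposition accomplishes in polytopal language.
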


The lowest-degree homogeneous component of $\mathfrak{G}_w$, the Schubert polynomial, equals an integer multiple of some $\chi_{D}$ for any permutation.  As a consequence of Theorem \ref{thm:main2}, one might wonder whether this is the case for $\mathfrak G_w^\top$ or for other homogeneous components of $\mathfrak G_w$.

We can use the following result to verify whether or not the Newton polytopes of the homogeneous components of $\mathfrak G_w$ are Schubitopes, the Newton polytopes of the polynomials $\chi_D$.

\begin{thm}
	\label{thm:main4}
	Fix $n\geq 1$. The rank functions $r_{\SM_n(I)}$ of Schubert matroids form a basis of the vector space of functions $f\colon 2^{[n]} \to \RR$ satisfying $f(\emptyset) = 0$. In particular,
	\begin{itemize}
		\item A generalized permutahedron is a Schubitope if and only if its associated submodular function is a $\ZZ_{\geq 0}$-linear combination of rank functions of Schubert matroids, and
		
		\item Two Schubitopes $\mathcal S_D$ and $\mathcal S_{D'}$ are equal if and only if $D$ can be obtained from $D'$ by a permutation of columns.
	\end{itemize}
\end{thm}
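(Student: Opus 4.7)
My plan is to prove the basis statement; the two bullets then follow as short corollaries. Since $V_n := \{f\colon 2^{[n]} \to \RR : f(\emptyset) = 0\}$ has dimension $2^n - 1$, matching the number of nonempty subsets of $[n]$, it suffices to establish linear independence. I would do this by induction on $n$, with the trivial base case $n = 1$.

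For the inductive step, I split the nonempty subsets of $[n]$ into $A = \{I : n \notin I,\ I \neq \emptyset\}$ and $B = \{I : n \in I\}$, and correspondingly decompose $V_n = V_{\mathrm{loop}} \oplus V_{\mathrm{marg}}$, where $V_{\mathrm{loop}} := \{f \in V_n : f(J) = f(J \setminus \{n\}) \text{ for all } J\}$ and $V_{\mathrm{marg}} := \{f \in V_n : f(J) = 0 \text{ for } n \notin J\}$. For $I \in A$, the element $n$ is a loop in $\SM_n(I)$, so $r_{\SM_n(I)} \in V_{\mathrm{loop}}$; under the restriction isomorphism $V_{\mathrm{loop}} \cong V_{n-1}$, the function $r_{\SM_n(I)}$ is identified with $r_{\SM_{n-1}(I)}$, so the inductive hypothesis yields a basis of $V_{\mathrm{loop}}$ from these rank functions.

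The main obstacle is showing that the projections of $\{r_{\SM_n(I)}\}_{I \in B}$ to $V_{\mathrm{marg}}$ form a basis. Setting $d_I(J) := r_{\SM_n(I)}(J \cup \{n\}) - r_{\SM_n(I)}(J)$ for $J \subseteq [n-1]$, this amounts to showing $\{d_I\}_{I \in B}$ is a basis of $\RR^{2^{[n-1]}}$. Using the formula $r_{\SM_n(I)}(J) = \min_{0 \leq l \leq k}(|J \cap [i_l]| + k - l)$ for $I = \{i_1 < \cdots < i_k\}$ with $i_0 := 0$, one computes $d_I(J) \in \{0, 1\}$, with $d_I(J) = 1$ precisely when $|J \cap (i_l, n]| \leq k - l - 1$ for all $l \in \{0, 1, \ldots, k-1\}$. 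Writing $I = I' \cup \{n\}$, each such inequality holds with equality at $J = I'$, so $d_I(I') = 1$. On the other hand, for $J \subseteq [n-1]$ strictly exceeding $I'$ in the size-then-lex order, $d_I(J) = 0$: if $|J| \geq k$, the $l = 0$ condition $|J| \leq k-1$ fails; if $|J| = k-1$ and $J$ is lex larger than $I'$, letting $s$ be the first index with $j_s > i_s$, all $k - s$ entries $j_s, \ldots, j_{k-1}$ lie in $(i_s, n]$, so the $l = s$ condition fails. Ordering $I' \subseteq [n-1]$ by size and lex then makes $(d_{I' \cup \{n\}}(J))_{I', J}$ lower triangular with ones on the diagonal, hence invertible. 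The full matrix for $\{r_{\SM_n(I)}\}$ in the decomposition $V_{\mathrm{loop}} \oplus V_{\mathrm{marg}}$ is thus block lower triangular with two invertible diagonal blocks, closing the induction.

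For the bulleted consequences: by its construction, the Schubitope $\mathcal S_D$ is the generalized permutahedron whose submodular function is $h_D = \sum_j r_{\SM_n(D_j)}$, summed over the columns $D_j$ of $D$, which yields bullet~1 immediately. For bullet~2, the equality $\mathcal S_D = \mathcal S_{D'}$ forces $\sum_j r_{\SM_n(D_j)} = \sum_j r_{\SM_n(D_j')}$, and the linear independence just established forces the multisets $\{D_j\}$ and $\{D_j'\}$ to agree, meaning $D'$ differs from $D$ by a permutation of columns.
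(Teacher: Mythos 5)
Your proof is correct and takes a genuinely different route from the paper's. Both arguments induct on $n$ and split on whether $n\in I$, but the paper works entirely at the level of the explicit matrix $A_n = (r_{\SM_n(I)}(J))_{I,J}$ in a hand-chosen total order $\prec$ (with $I \prec J$ iff $\max(I\setminus J)\leq\max(J\setminus I)$), performing two rounds of row operations to expose a block structure via Lemmas~\ref{lem:schubert-matroid-rank-1} and~\ref{lem:schubert-matroid-rank-2}, and reducing to $A_{n-1}$ using the fact that $V_{n-1}$ is an initial segment of $V_n$ for $\prec$. You instead introduce the conceptual direct-sum decomposition $V_n = V_{\mathrm{loop}}\oplus V_{\mathrm{marg}}$ --- exploiting that $n$ is a loop of $\SM_n(I)$ exactly when $n\notin I$ --- so the block-triangular shape is structural rather than the output of Gaussian elimination, and no order compatibility between the two blocks is required. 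For the $V_{\mathrm{marg}}$ block you use a different explicit formula for the rank, $r_{\SM_n(I)}(J)=\min_{0\le l\le k}(|J\cap[i_l]|+k-l)$, in place of the paper's bracket-matching count $\theta_I^n(J)$, which makes $d_I\in\{0,1\}$ immediate, and a size-then-lex order rather than $\prec$, producing a unitriangular matrix for that block directly. The two bullets are handled the same way in both treatments (the Minkowski sum description of $\mathcal S_D$ plus Lemma~\ref{lem:gp-from-rank}). One small caveat: the min formula you invoke for Schubert matroid rank is standard but not stated or proved in this paper (which only records the $\theta$ formula), so a self-contained writeup should either cite it, derive it directly from the basis description of $\SM_n(I)$, or verify its equivalence with $\theta_I^n$.
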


Using Theorem \ref{thm:main4}, we exhibit two interesting counter examples:

(1) Example~\ref{exp:main3counter} provides a vexillary permutation $w$ and a (not top-degree) homogeneous component of $\mathfrak G_w$ whose Newton polytope is not a Schubitope. This suggests focusing attention only on $\mathfrak G_w^\top$ when looking for Schubitopes among the homogeneous components of $\mathfrak G_w$.

(2) Example~\ref{exp:main2counter} gives a non-vexillary permutation $w$ where $\Newton(\mathfrak G_w^\top)$ is not a Schubitope and so is not a multiple of any $\chi_D$. This suggests restricting attention to vexillary permutations when relating $\mathfrak G_w^\top$ to $\chi_D$. We conjecture the following strengthening of Theorem \ref{thm:main2} (tested for all vexillary $w\in S_n$, $n\leq 9$):

\begin{conj}
	\label{conj:1}
	If $w \in S_n$ is vexillary, then $\mathfrak G_w^\top$ is an integer multiple of $\chi_{D^\top(w)}$.
\end{conj}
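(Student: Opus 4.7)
The plan is to leverage Theorem~\ref{thm:main2} and promote the equality of supports to an equality of polynomials up to a scalar. Write $c_\alpha \colonequals [x^\alpha]\,\mathfrak G_w^\top$ and $d_\alpha \colonequals [x^\alpha]\,\chi_{D^\top(w)}$. Since both polynomials are homogeneous of the same degree with coinciding supports by Theorem~\ref{thm:main2}, the conjecture reduces to showing that the ratio $c_\alpha/d_\alpha$ is a constant positive integer $N(w)$ independent of $\alpha \in \supp(\mathfrak G_w^\top)$. As a first step, I would compute $N(w)$ for all vexillary $w\in S_n$ with $n \leq 9$ (the data is already available from the authors' tests of the conjecture) and guess a closed form for $N(w)$ in terms of the vexillary shape $\lambda(w)$ and its flag; knowing the target constant in advance would pin down the identity to prove.

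With $N(w)$ conjecturally identified, the main task is a two-sided weighted enumeration. On the Grothendieck side, I would refine the combinatorics of $\BD(w)$ developed for Theorem~\ref{thm:main1}: let $\BD^\top(w) \subseteq \BD(w)$ denote the bubbling diagrams of maximal weight, and extract from the proof of Theorem~\ref{thm:main1} a formula of the form $\mathfrak G_w^\top = \sum_{\mathcal D \in \BD^\top(w)} x^{\wt(\mathcal D)}$ (with appropriate multiplicities). On the flagged-Weyl side, $\chi_{D^\top(w)}$ admits a positive combinatorial formula via the tableau basis of the flagged Weyl module (Kra\'skiewicz--Pragacz) or, when $D^\top(w)$ is percent-avoiding, via Magyar's column-strict tableaux. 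The goal is then to construct a weight-preserving map $\Phi$ from $\BD^\top(w)$ to the set of tableaux parametrizing $\chi_{D^\top(w)}$ whose every fiber has size exactly $N(w)$.

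The main obstacle will be establishing uniform fiber size for $\Phi$ across all supported monomials $\alpha$. A promising tactic is to reduce to local moves: a carefully chosen elementary move on $\BD^\top(w)$ (a top-degree analogue of the $K$-bubbling moves of Definition~\ref{defn:K-bubbling-move}) should preserve the image under $\Phi$, organizing each fiber into a single orbit of the predicted size. An alternative route avoids an explicit bijection by invoking the known determinantal formulas for Grothendieck polynomials of vexillary permutations (for instance the Hudson--Ikeda--Matsumura--Naruse formula) and matching top-degree components directly against a Jacobi--Trudi-style character formula for the flagged Weyl module of $D^\top(w)$; this translates the problem into a top-term identity among symmetric-function expressions in the stable Grothendieck polynomials $G_\lambda$. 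Either route ultimately requires understanding how the K-theoretic corrections collected in $\mathfrak G_w^\top$ assemble into a single rescaled flagged Weyl character, and the delicate point is exactly that the combined contribution of bubbling diagrams mapping to a fixed tableau genuinely does not depend on the chosen monomial.
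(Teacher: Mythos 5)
The statement you are addressing is Conjecture~\ref{conj:1}, which the paper does not prove: it is stated as an open conjecture, verified computationally for all vexillary $w \in S_n$ with $n \leq 9$. Your text is likewise a research plan rather than a proof, and it has genuine gaps at exactly the points where the difficulty lies. First, Theorem~\ref{thm:main2} gives only an equality of \emph{supports}; passing from equal supports to proportional coefficient vectors is the entire content of the conjecture, and nothing in your outline establishes that $c_\alpha/d_\alpha$ is independent of $\alpha$ (or even that it is an integer for each $\alpha$, since $\chi_{D^\top(w)}$ is generally not multiplicity-free). Second, the formula $\mathfrak G_w^\top = \sum_{\mathcal D\in\BD^\top(w)} x^{\wt(\mathcal D)}$ ``with appropriate multiplicities'' is not available from the paper: Theorem~\ref{thm:main1} and its proof produce weight-preserving maps between $\MBPD(w)$ and $\BD(w)$ that realize every weight, but these maps are not shown to be bijections, so bubbling diagrams compute supports, not coefficients. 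You would first need an actual enumerative formula for the top-degree coefficients in terms of some combinatorial objects before any fiber-counting argument could begin.

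Third, the two concrete routes you propose are each left entirely open. The map $\Phi$ to the tableau basis of $\mathcal M_{D^\top(w)}$ is not constructed, and ``uniform fiber size'' is precisely the assertion to be proved; saying that a ``carefully chosen elementary move'' will organize each fiber into a single orbit of size $N(w)$ restates the conjecture rather than arguing for it. The alternative determinantal route likewise defers all the work to an unproved ``top-term identity'' between a vexillary Grothendieck determinant and a flagged Weyl character. None of this is a criticism of the plan as a plan --- a coefficient-level refinement of the bubbling calculus, or a comparison of determinantal formulas, are both reasonable directions --- but as written your text does not prove the statement, and the paper itself offers no proof against which to measure it.
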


\textbf{Outline of the paper.} In Section~\ref{sec:background}, we recall some relevant background. In Section~\ref{sec:bubbling-and-supports}, we establish basic properties of bubbling diagrams, including a non-recursive characterization of the set of bubbling diagrams (Lemma~\ref{lem:BD-characterization}) and prove Theorem~\ref{thm:main1} by constructing weight preserving maps between the set of bubbling diagrams and the set of marked bumpless pipe dreams. In Section~\ref{sec:supports-of-top}, we prove Theorem~\ref{thm:main2} by showing that bubbling diagrams can be systematically padded to obtain a top-degree diagram which is necessarily in $\supp(\chi_{D^\top(w)})$ (Theorem~\ref{thm:top-diagrams}), and we also show that divisibility relations among monomials in $\mathfrak G_w$ can be realized by inclusion relations among bubbling diagrams in a strong sense (Theorem~\ref{thm:remove-dead-squares}). In Section~\ref{sec:supports-of-homog}, we deduce Theorem~\ref{thm:main3} from a ``one-column version'' of the result (Proposition~\ref{prop:one-column-M-convexity}). In Section~\ref{sec:linear-independence}, we prove Theorem~\ref{thm:main4} and use it to show that our results are sharp.


\section{Background}
\label{sec:background}
\subsection*{Conventions}

We will write permutations $w\in S_n$ in one-line notation as words with the letters $\{1,2,\ldots,n\}$. For example, $w=312\in S_3$ is the permutation that sends $1\mapsto 3$, $2\mapsto 1$, and $3\mapsto 2$. Throughout, permutations will act on the right (switching positions, not values). For $j\in [n-1]$, let $s_j$ denote the adjacent transposition swapping positions $j$ and $j+1$, so for example, $ws_1$ is the permutation $w$ with the numbers $w(1)$ and $w(2)$ swapped. We write $\ell(w)$ for the number of inversions of $w$.

\subsection*{Grothendieck polynomials}

\begin{defn}
Fix $n\geq 1$ and $j \in [n-1]$. The \textbf{divided difference operators} $\partial_j$ are operators on the polynomial ring $\ZZ[x_1, \dots, x_n]$ defined by
\begin{align*}
\partial_j(f) &\overset{\rm def}= \frac{f - s_j\cdot f}{x_j - x_{j+1}} \\&= \frac{f(x_1, \dots, x_n) - f(x_1, \dots, x_{j-1}, x_{j+1}, x_j, x_{j+2}, \dots, x_n)}{x_j - x_{j-1}}.
\end{align*}
The \textbf{isobaric divided difference operators} $\overline \partial_j$ are defined on $\ZZ[x_1,\dots,x_n]$ by
\[
\overline\partial_j(f)\overset{\rm def}=\partial_j(f - x_{j+1}f).
\]
\end{defn}

\begin{defn}
The \textbf{Grothendieck polynomial} $\mathfrak G_w$ of $w \in S_n$ is defined recursively on the weak Bruhat order. Let $w_0$ denote the longest permutation in $S_n$. If $w \neq w_0$, then there is $j \in [n-1]$ with $w(j) < w(j+1)$. The polynomial $\mathfrak G_w$ is defined by
\[
\mathfrak G_w \overset{\rm def}=\begin{cases}
x_1^{n-1}x_2^{n-2}\dots x_{n-1} &\textup{ if } w = w_0,\\
\overline\partial_j\mathfrak G_{ws_j} &\textup{ if } w(j) < w(j+1).
\end{cases}
\]
\end{defn}

Recall that a permutation $w \in S_n$ is \textbf{vexillary} if it is 2143-avoiding, that is, if there do not exist $i < j < k < \ell$ with $w(j) < w(i) < w(\ell) < w(k)$.

\begin{thm}[{\cite{hafner2022}*{Theorem 3.4}}]
\label{thm:HafSupportBetween}
Let $w \in S_n$ be a vexillary permutation, and let $\alpha, \gamma \in \supp(\mathfrak G_w)$ be such that $x^\alpha \mid x^\gamma$. Then, any $\beta\in \ZZ^n$ such that $x^\alpha \mid x^\beta \mid x^\gamma$ is also in $\supp(\mathfrak G_w)$. 
\end{thm}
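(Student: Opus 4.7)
My plan is to work with the set-valued tableau formula for vexillary Grothendieck polynomials, which is the natural combinatorial model in the vexillary setting. By work of Knutson--Miller--Yong (extending Buch's Grassmannian formula), for $w$ vexillary with associated partition $\lambda(w)$ and flag $\phi(w)$, one has
\[
\mathfrak{G}_w \;=\; \sum_{T} (-1)^{|T| - \ell(w)}\, x^{\wt(T)},
\]
the sum taken over flagged set-valued tableaux $T$ of shape $\lambda(w)$. Since the coefficient of every degree-$d$ monomial in $\mathfrak{G}_w$ has the common sign $(-1)^{d-\ell(w)}$, no cancellation across distinct monomials can occur, so $\supp(\mathfrak{G}_w)$ is precisely the set of weights $\wt(T)$.

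Using this description, I would induct on $|\gamma|-|\beta|$, the base case $|\gamma|=|\beta|$ being $\beta=\gamma$. The inductive step reduces to the following single-step claim: if $\alpha \leq \beta \leq \gamma$ componentwise, $|\beta|<|\gamma|$, and $\alpha, \gamma \in \supp(\mathfrak{G}_w)$, then there exists $\gamma' \in \supp(\mathfrak{G}_w)$ with $\beta \leq \gamma' \leq \gamma$ and $|\gamma'|=|\gamma|-1$. Iterating this claim with $\gamma$ replaced by $\gamma'$ produces a descending chain in $\supp(\mathfrak{G}_w)$ that reaches $\beta$ after $|\gamma|-|\beta|$ steps.

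To establish the single-step claim, I would fix a flagged set-valued tableau $T_\gamma$ realizing $\gamma$ and seek a box $b$ and an entry $k \in T_\gamma(b)$ satisfying (i) $|T_\gamma(b)| \geq 2$, so that deleting $k$ yields a valid set-valued tableau and hence places $\gamma - e_k$ into $\supp(\mathfrak{G}_w)$, and (ii) $\gamma_k > \beta_k$, so that the new weight still dominates $\beta$. The existence of such a pair $(b,k)$ is the crux of the argument.

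The main obstacle is arranging (ii): a priori every multi-element box of $T_\gamma$ could contain only ``essential'' indices $k$ with $\gamma_k=\beta_k$, in which case no naive deletion works. To rule this out I would compare $T_\gamma$ with a set-valued tableau $T_\alpha$ realizing $\alpha$---this is where the hypothesis $\alpha \in \supp(\mathfrak{G}_w)$ enters---and argue that $T_\gamma$ can always be modified by flag-compatible swaps so that some ``excess'' index $k$ with $\gamma_k > \beta_k$ ends up in a multi-element box. The vexillary hypothesis is critical here, as it provides the flag bound on row entries that makes such swaps legal and keeps the rectangular $\lambda(w)$-shape rigid. I anticipate that the swap/repositioning step, together with the associated bookkeeping on row- and column-strictness, will constitute the bulk of the work and will likely require careful case analysis on the relative positions of excess entries in $\lambda(w)$.
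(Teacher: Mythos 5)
The statement is an external result: the paper quotes it from Hafner's 2022 work, whose proof goes through bumpless pipe dreams and the controlled local-move structure of Lemma~\ref{lem:bpd-organized-local-moves}. Your proposal instead works with the flagged set-valued tableau model, which is a legitimately different combinatorial route. The preliminary steps are sound: the common-sign observation showing $\supp(\mathfrak G_w)$ is exactly the set of tableau weights, the reduction to the single-step deletion claim, and the fact that removing an entry from a multi-element box preserves all tableau and flag conditions.

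However, there is a genuine gap at the crux. You correctly identify that the hard part is producing a box $b$ and an index $k\in T_\gamma(b)$ with $|T_\gamma(b)|\ge 2$ and $\gamma_k>\beta_k$, and you correctly observe that this may fail for the particular $T_\gamma$ you start with. Your remedy --- ``modify $T_\gamma$ by flag-compatible swaps,'' comparing against some tableau $T_\alpha$ realizing $\alpha$ --- is where the argument stops being a proof. You have not specified which swaps are available, why the vexillary flag makes them legal, or why any sequence of swaps must eventually expose a deletable $k$ with $\gamma_k>\beta_k$; the hypothesis $\alpha\in\supp(\mathfrak G_w)$ enters only as a vague comparison device rather than through any concrete invariant of $T_\alpha$. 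Note also that the single-step claim you aim for (``some $k$ with $\gamma_k>\beta_k$ works'') is not obviously easier than the uniform version (``every $k$ with $\gamma_k>\alpha_k$ works'') that the full theorem gives a posteriori, and without the uniform version you cannot steer the descending chain toward a prescribed $\beta$ unless you prove existence of the right $k$ at every step. In its current form the proposal identifies the right target and the right difficulty, but the central swap argument remains a hope rather than an argument, and this is precisely where a proof would succeed or fail.
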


\subsection*{Marked bumpless pipe dreams}
A \textbf{bumpless pipe dream} (BPD) is a tiling of the $n\times n$ grid with the tiles
\begin{center}
\includegraphics{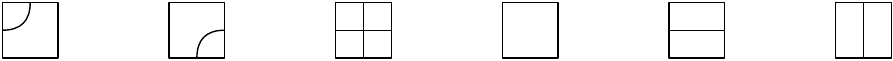}
\end{center}
that form a network of $n$ pipes running from the bottom edge of the grid to the right edge \cites{ls2021, weigandt2021}. A bumpless pipe dream is \textbf{reduced} if each pair of pipes crosses at most once.

Given a bumpless pipe dream, one can define a permutation given by labeling the pipes $1$ through $n$ along the bottom edge and then reading off the labels on the right edge, ignoring any crossings after the first, i.e. replacing redundant crossing tiles \includegraphics{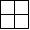} with bump tiles \includegraphics{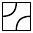}. The set of all bumpless pipe dreams associated to $w$ is denoted $\BPD(w)$ and the set of all reduced bumpless pipe dreams associated to $w$ is denoted $\RBPD(w)$.

For any permutation $w\in S_n$, the \textbf{Rothe bumpless pipe dream} is the unique BPD $P(w) \in \BPD(w)$ which has no up-elbow tiles \scalebox{.6}{\begin{tikzpicture}
\draw[gray, thin] (0,0) rectangle (.5,.5);
\draw[black,thick] (0,.25) .. controls (.25,.25) .. (.25,.5);
\end{tikzpicture}}; each pipe has one down-elbow tile \scalebox{.6}{\begin{tikzpicture}
\draw[gray, thin] (0,0) rectangle (.5,.5);
\draw[black,thick] (.25,0) .. controls (.25,.25) .. (.5,.25);
\end{tikzpicture} } at $(i,w(i))$.

Given $P \in \BPD(w)$, let $D(P)$ denote the set of blank tiles and $U(P)$ denote the up-elbow tiles. A \textbf{marked bumpless pipe dream} (MBPD) is a pair $(P,S)$ where $P \in \BPD(w)$ and $S\subseteq U(P)$. The set of marked bumpless pipe dreams is denoted $\MBPD(w)$.

\begin{prop}[{\cite{weigandt2021}*{Corollary 1.5}}]
We have
\[
\mathfrak G_w = \sum_{(P,S) \in \MBPD(w)}(-1)^{|D(P)| + |S| - \ell(w)}\left(\prod_{(i,j) \in D(P)\cup S}x_i\right).
\]
\end{prop}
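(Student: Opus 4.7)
The plan is to induct on $\ell(w_0) - \ell(w)$. Write $F_w$ for the right-hand side of the displayed formula. I would establish the base case $F_{w_0} = x_1^{n-1}x_2^{n-2}\cdots x_{n-1}$ and the inductive step $F_w = \overline{\partial}_j F_{ws_j}$ whenever $w(j) < w(j+1)$; the defining recursion for $\mathfrak{G}_w$ then forces $F_w = \mathfrak{G}_w$.

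The base case is a direct inspection. The unique element of $\BPD(w_0)$ is the Rothe BPD, whose blank tiles occupy exactly $\{(i,j) : i+j \leq n\}$, with down-elbows on the antidiagonal and crossings below it. There are no up-elbows, so $S = \emptyset$ is forced; $|D(P)| = \binom{n}{2} = \ell(w_0)$ gives sign $+1$, and the contribution $\prod_{(i,j) \in D(P)} x_i = \prod_i x_i^{n-i}$ is exactly the staircase monomial.

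For the inductive step, let $v = ws_j$. The operator $\overline{\partial}_j$ only touches $x_j$ and $x_{j+1}$, and for an MBPD $(P,S) \in \MBPD(v)$ the exponents of $x_j, x_{j+1}$ depend only on the tiles of $P$ in rows $j, j+1$ together with the part of $S$ lying in those rows. I would fix the exterior data (everything outside rows $j, j+1$) and reduce $\overline{\partial}_j F_v = F_w$ to a family of two-row local identities, one for each admissible pair of boundary configurations along the horizontal seams bordering rows $j$ and $j+1$.

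The main obstacle is proving these local two-row identities. Each requires classifying all legal two-row patches for $v$ with given boundary, weighted by subsets of their up-elbows, and showing that the signed sum transforms under $\overline{\partial}_j = \partial_j \circ (1 - x_{j+1})$ into the analogous sum for $w$-patches. I would pursue a sign-reversing involution on $v$-patches whose non-fixed pairs either differ by a factor of $(x_j - x_{j+1})$ (matching $\partial_j$) or differ by toggling a single marking in $S$ (absorbing the $1 - x_{j+1}$), with the fixed points mapping weight-preservingly onto $w$-patches. A natural candidate is a local K-theoretic droop move within rows $j, j+1$ together with a toggling rule on the markings of the involved up-elbows. A less local alternative is to construct a weight-preserving bijection between $\MBPD(w)$ and K-theoretic (ordinary) pipe dreams of $w$, reducing the statement to the Fomin--Kirillov formula for $\mathfrak{G}_w$; this trades the local analytic obstacle for the combinatorial one of explicitly describing the bijection via droop moves between reduced BPDs and reduced pipe dreams.
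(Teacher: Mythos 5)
The paper does not prove this proposition at all: it is quoted with a citation to Weigandt \cite{weigandt2021}*{Corollary 1.5}, so there is no internal argument to compare yours against, and any proof you give is necessarily ``a different route from the paper.'' Weigandt's own proof, for what it is worth, does not run the divided-difference recursion directly on bumpless pipe dreams; it goes through the correspondence between BPDs and alternating sign matrices and Lascoux's ASM formula for Grothendieck polynomials.

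Judged on its own terms, your outline has a correct base case and a sensible global shape, but the entire content of the theorem sits in the step you explicitly defer. You reduce $\overline\partial_j F_{ws_j}=F_w$ to a family of two-row identities and then say the ``main obstacle'' is proving them, offering only a ``natural candidate'' involution that is never defined, let alone checked. That is a genuine gap, not a routine verification, and the localization itself is more delicate than you acknowledge. First, for Grothendieck (as opposed to Schubert) polynomials the sum runs over non-reduced BPDs, and the permutation of a BPD is read off by discarding redundant crossings; whether a crossing tile inside rows $j,j+1$ is the first crossing of its two pipes depends on global information about the rest of the diagram, so the ``boundary configuration'' of a two-row strip must record which pairs of entering pipes have already crossed elsewhere. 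Second, the sets of admissible exteriors for $\BPD(ws_j)$ and $\BPD(w)$ need not coincide, so some local sums must be shown to be annihilated by $\overline\partial_j$ rather than matched. Your fallback route --- a weight-preserving bijection from $\MBPD(w)$ to K-theoretic pipe dreams followed by Fomin--Kirillov --- replaces one unproved assertion with another: even in the Schubert case such a bijection (Gao--Huang) is a substantial theorem, and its marked/non-reduced extension is not automatic. As it stands the proposal is a plausible strategy, not a proof.
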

Given a MPBD $(P,S)$, the \textbf{weight} of $(P,S)$ is the vector $\wt(P,S)\in\ZZ^n$ whose $i$-th component is the number of tiles in the $i$-th row that are blank or up-elbows.
\begin{cor}
\label{cor:supp-Gw-mbpd}
We have
\[
\supp(\mathfrak G_w) = \{\wt(P,S)\colon (P,S) \in \MBPD(w)\}.
\]
\end{cor}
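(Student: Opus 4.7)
The plan is to derive the corollary directly from the signed-sum formula of the preceding proposition by observing that, for each fixed monomial, all contributing terms share the same sign. In particular, no cancellation can occur between MBPDs that produce the same monomial, so a monomial survives in $\mathfrak G_w$ if and only if at least one MBPD produces it.

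To set this up, note that the monomial associated with $(P,S)$ in the proposition is $\prod_{(i,j)\in D(P)\cup S} x_i = x^{\wt(P,S)}$, whose total degree equals $|D(P)\cup S| = |D(P)|+|S|$. Here the second equality uses that $D(P)$ (blank tiles) and $S\subseteq U(P)$ (up-elbow tiles) are sets of tiles of different types in the BPD $P$, and are therefore disjoint. Consequently $|D(P)|+|S|$ depends only on $\wt(P,S)$: it is simply $|\wt(P,S)|$, so the sign $(-1)^{|D(P)|+|S|-\ell(w)}$ rewrites as $(-1)^{|\wt(P,S)|-\ell(w)}$, which is determined by the monomial alone.

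Given this sign constancy, I would collect terms in the proposition's formula according to weight, obtaining
$$[x^\alpha]\,\mathfrak G_w \;=\; (-1)^{|\alpha|-\ell(w)}\cdot\bigl|\{(P,S)\in\MBPD(w) : \wt(P,S)=\alpha\}\bigr|$$
for every $\alpha \in \ZZ_{\geq 0}^n$. The right-hand side is a signed nonnegative integer, hence nonzero precisely when the indexing set is nonempty. This yields the claimed equality $\supp(\mathfrak G_w) = \{\wt(P,S) : (P,S)\in\MBPD(w)\}$.

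There is no substantive obstacle here: the statement is a bookkeeping consequence of the proposition, and the only input beyond it is the disjointness of $D(P)$ and $S$ that makes each term's sign depend only on its monomial. If one were concerned about conventions, the most delicate point to double-check is simply that $\wt(P,S)$, as defined, really is the exponent vector of $\prod_{(i,j)\in D(P)\cup S}x_i$; this is immediate from the definition.
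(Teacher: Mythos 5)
Your argument is correct and is exactly the implicit reasoning the paper relies on when stating this as an immediate corollary without a written proof: since $D(P)$ (blank tiles) and $S\subseteq U(P)$ (up-elbow tiles) are disjoint, the sign $(-1)^{|D(P)|+|S|-\ell(w)}$ in the preceding proposition depends only on $|\alpha|=|D(P)\cup S|$, so contributions to a fixed monomial $x^\alpha$ never cancel. Your closing caveat about $\wt(P,S)$ is the right thing to flag — the paper's phrase ``blank or up-elbows'' should be read as ``in $D(P)\cup S$,'' i.e.\ blank or \emph{marked} up-elbow — and once that is fixed, your identification $\prod_{(i,j)\in D(P)\cup S}x_i = x^{\wt(P,S)}$ and the ensuing sign-constancy argument go through verbatim.
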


The \textbf{rank} of a tile $(i,j)\in D(P)$ is the number of pipes northwest of $(i,j)$.

\begin{lem}[{\cite{weigandt2021}}]
\label{lem:vex-reduced}
A permutation $w\in S_n$ is vexillary if and only if every $P\in \BPD(w)$ is reduced.
\end{lem}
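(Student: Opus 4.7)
The plan is to use the droop-move description of bumpless pipe dreams: every $P \in \BPD(w)$ is obtained from the Rothe BPD $P(w)$ by a sequence of droop moves, each of which shifts a pipe southeast through a rectangular region of the grid while preserving the associated permutation. The Rothe BPD is tautologically reduced: each pair of pipes crosses at most once there, since each crossing is forced by an inversion of $w$ and each inversion contributes exactly one crossing (determined by the two relevant down-elbows). Droop moves can however introduce a second crossing between a pair of pipes, and the goal is to pin down when this occurs in terms of patterns in $w$.

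For the forward direction, assume $w$ is vexillary and suppose for contradiction that some $P \in \BPD(w)$ has pipes $p$ and $q$ that cross twice. I would trace these pipes back to their down-elbows $(a,w(a))$ and $(b,w(b))$ in $P(w)$, with $a<b$, and argue that the two distinct crossing tiles, together with the relative positions of the down-elbows, force the existence of four indices $i_1 < i_2 < i_3 < i_4$ with $w(i_2) < w(i_1) < w(i_4) < w(i_3)$, contradicting $2143$-avoidance. Concretely, a second crossing of $p$ and $q$ beyond the forced inversion crossing requires a third pipe to have detoured around $p$ (or $q$) far enough to justify the re-encounter, and that detour is itself recorded by an inversion of $w$; assembling the four endpoints of these inversions yields the $2143$-pattern. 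For the reverse direction, given a $2143$-pattern at positions $i_1<i_2<i_3<i_4$ with $w(i_2)<w(i_1)<w(i_4)<w(i_3)$, I would exhibit a non-reduced BPD explicitly: starting from $P(w)$, I would droop the pipe whose Rothe down-elbow sits at $(i_1,w(i_1))$ through the rectangle bounded on the southeast by $(i_3,w(i_3))$, using the slack guaranteed by the $2143$-inequalities to re-route this pipe past the pipe through $(i_3,w(i_3))$ so that the two cross a second time.

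The main obstacle will be the careful pipe-tracking in the forward direction: no matter how the two offending pipes meander through intervening elbow and crossing tiles after a long sequence of droops, one must show that their two crossing tiles, paired with their Rothe down-elbows, produce four columns arranged in the $2143$-configuration. This requires a clean case analysis of how the relative positions of the two crossings constrain the values $w(i_k)$, and a verification that vexillarity is the sharp obstruction, with no stronger pattern avoidance being needed.
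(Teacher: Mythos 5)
The paper offers no proof of this lemma --- it is quoted from \cite{weigandt2021} --- so your proposal must stand on its own, and as it stands it has two substantive gaps. The first is your foundational premise. Droop moves in the sense of Lam--Lee--Shimozono generate exactly the \emph{reduced} BPDs from the Rothe BPD; they preserve reducedness, so no sequence of droops can ever produce a non-reduced element of $\BPD(w)$, which is precisely the object you need to analyze. Reaching non-reduced BPDs requires the crossing-creating ``K-theoretic'' moves (the local moves pictured in Section~\ref{sec:background} that trade a pair of elbows for a pair of crossings), and the assertion that \emph{every} element of $\BPD(w)$ is reachable from $P(w)$ by droops together with these moves is itself a nontrivial theorem that you neither prove nor cite. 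The version available in this paper, Lemma~\ref{lem:bpd-organized-local-moves}, is stated only for vexillary $w$, so it cannot be used to prove that vexillarity forces reducedness without circularity, and it is unavailable in the converse direction where $w$ is assumed non-vexillary.

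The second gap is that the core of the forward direction --- extracting a $2143$ pattern from a pair of pipes that cross twice --- is exactly the step you defer, and the mechanism you propose (``a third pipe must have detoured around $p$ to justify the re-encounter'') is not correct as stated: the minimal double-crossing configuration is the local move consisting of two diagonally adjacent crossing tiles flanked by one up-elbow and one down-elbow, and the lens between the two crossings encloses no third pipe at all. Since a $2143$ pattern needs four positions and the doubly-crossing pair supplies only two, the remaining two pipes must be produced by a global argument tracking where the two pipes meet the bottom and right edges of the grid and which other pipes are forced to separate the lens from the boundary; none of that analysis appears in your sketch, and the case analysis (which pipe is vertical at which crossing, whether the pair forms an inversion, whether the two crossings are the first two) genuinely matters. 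The reverse direction is the more tractable half and your plan for it is reasonable, but the legality of the proposed re-routing (that it yields a valid BPD of the same $w$, unobstructed by other pipes) still needs to be checked. In short, this is a plausible outline of a strategy rather than a proof.
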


A \textbf{local move} is any of the following local transformations of BPDs:
\begin{center}
\includegraphics[scale=0.8]{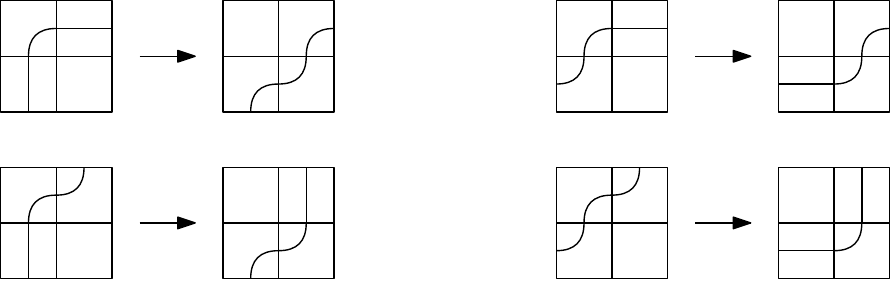}
\end{center}

\begin{lem}[{\cite{weigandt2021}*{Lemma 7.4}, \cite{hafner2022}*{Lemma 2.3}}]
\label{lem:bpd-organized-local-moves}
Let $w\in S_n$ be a vexillary permutation, and let $P \in \BPD(w)$. Then $P$ can be obtained from the Rothe BPD by using local moves to position pipe $w(n)$, then to position pipe $w(n-1)$, and so on through pipe $w(1)$. Alternatively, $P$ can be obtained from the Rothe BPD by using local moves to position pipe $n$, then to position pipe $n-1$, and so on through pipe $1$.
\end{lem}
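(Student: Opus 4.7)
The plan is to induct on $n$, treating the second formulation (positioning pipes in order $n, n-1, \ldots, 1$); the first formulation then follows by a symmetric argument exploiting the row-column duality of BPDs (replacing $w$ by $w^{-1}$ and transposing the grid). The base case $n = 1$ is trivial.

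For the inductive step, fix vexillary $w \in S_n$ and $P \in \BPD(w)$. I would first position pipe $n$: in $P(w)$, pipe $n$ is an L-shape in column $n$, entering at the bottom of column $n$ and exiting immediately after the elbow at $(w^{-1}(n), n)$, while in $P$ it traces a possibly more elaborate path between the same endpoints. Using the connectivity of $\BPD(w)$ under local moves (Weigandt, or Lam--Lee--Shimozono) together with the reducedness from Lemma~\ref{lem:vex-reduced}, I would show that one can schedule the moves transforming $P(w)$ into $P$ so that all moves involving pipe $n$ come first, arriving at an intermediate BPD in which pipe $n$ is in its $P$-position. The remaining $n-1$ pipes then occupy the region northwest of pipe $n$'s path and, after collapsing row $w^{-1}(n)$ and column $n$, form a reduced BPD of the smaller permutation $w' \in S_{n-1}$ obtained by deleting $n$ from the one-line notation of $w$. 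Since $2143$-avoidance is preserved under deletion, $w'$ is vexillary, and the inductive hypothesis supplies local moves within the smaller grid that position pipes $n-1, n-2, \ldots, 1$ in order without touching pipe $n$.

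The main obstacle is the scheduling step, namely showing that local moves can be rearranged so that all moves affecting pipe $n$ come first. A concrete approach is to induct on the ``excess area'' swept out by pipe $n$'s $P$-path compared to its $P(w)$-path: starting from $P(w)$, one identifies an ``innermost available corner'' where a droop on pipe $n$ makes progress toward pipe $n$'s $P$-configuration, applies this droop, and iterates. The vexillary hypothesis, via Lemma~\ref{lem:vex-reduced}, is essential to guarantee that all intermediate BPDs remain reduced, forbidding obstructive multi-crossing configurations that could force droops on other pipes in the middle of positioning pipe $n$.
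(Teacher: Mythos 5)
This lemma is imported: the paper cites \cite{weigandt2021}*{Lemma 7.4} and \cite{hafner2022}*{Lemma 2.3} and gives no proof of its own, so there is nothing in this paper to compare your argument against; I can only assess the proposal on its merits. Your overall shape (position pipe $n$ first, recurse) matches the spirit of the cited results, and the transposition duality you invoke for the first formulation is sound (pipe $j$ of $P\in\BPD(w)$ becomes pipe $w^{-1}(j)$ of the transposed BPD of $w^{-1}$, and vexillarity is preserved under inversion). But as written the proposal has two genuine gaps.

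First, the scheduling step is not a detail to be filled in later --- it \emph{is} the lemma. You say you ``would show that one can schedule the moves... so that all moves involving pipe $n$ come first,'' and then propose inducting on excess area via an ``innermost available corner.'' What must actually be proved is that at every intermediate stage there exists a legal local move involving only pipe $n$ that strictly reduces the discrepancy with pipe $n$'s position in $P$, and that no configuration arises in which progress on pipe $n$ is blocked until some other pipe is moved. This is exactly where the vexillary hypothesis does work (via Lemma~\ref{lem:vex-reduced}, which guarantees all intermediate BPDs are reduced), but you only assert that reducedness ``forbids obstructive configurations'' without exhibiting the obstruction analysis. Second, the inductive reduction to $S_{n-1}$ does not go through as described: once pipe $n$ has been drooped into its $P$-position, its path is a staircase occupying cells well outside row $w^{-1}(n)$ and column $n$, so ``collapsing row $w^{-1}(n)$ and column $n$'' neither removes pipe $n$ nor leaves an $(n-1)\times(n-1)$ grid on which the remaining pipes form a BPD of $w'$. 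The remaining pipes live in the skew region northwest of pipe $n$'s path, and you would need a separate argument that local moves for $w'$ transport to local moves in that region --- or abandon the induction on $n$ in favor of a direct argument about reordering a given sequence of local moves (which is closer to what the cited sources do).
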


\subsection*{Diagrams}

As stated in the introduction, a diagram is a subset $D\subseteq[n]\times[k]$. When we draw diagrams, we read the indices as in a matrix.

Associated to any permutation $w \in S_n$ is the \textbf{Rothe diagram} $D(w)\subseteq[n]\times[n]$, defined by
\[
D(w)\overset{\rm def}= \{(i,j)\in[n]\times[n]\colon i < w^{-1}(j) \textup{ and } j < w(i)\}.
\]
The Rothe diagram comes equipped with a \textbf{rank function} $r_{D(w)}\colon D(w)\to\ZZ_{\geq 0}$ defined by
\[
r_{D(w)}(i,j) = |\{(k,w(k))\colon k < i \textup{ and } w(k) < j\}|.
\]

For $R, S\subseteq[n]$, we say $R\leq S$ if $\#R = \#S$ and the $k$-th smallest element of $R$ does not exceed the $k$-th smallest element of $S$ for every $k$. For diagrams $C = (C_1, \dots, C_k)$ and $D = (D_1, \dots, D_k)$, we say $C\leq D$ if $C_j \leq D_j$ for every $j \in [k]$.

\subsection*{Flagged Weyl modules}

Let $Y$ denote a matrix with indeterminates $y_{ij}$ in the upper triangular positions $i \leq j$ and zeroes elsewhere. Given a matrix $M \in M_n(\CC)$ and $R,S\subseteq[n]$, let $M_R^S$ denote the submatrix of $M$ obtained by restricting to rows $S$ and columns $R$.

Let $B$ denote the set of upper triangular matrices in $\GL_n(\CC)$, and let $\mathfrak b$ denote the set of upper triangular matrices in $M_n(\CC)$. The coordinate ring $\CC[\mathfrak b]$ is a polynomial ring in the variables $\{y_{ij}\colon i \leq j\}$. The action of $B$ on $\mathfrak b$ by left multiplication induces an action of $B$ on $\CC[\mathfrak b]$ on the right via $f(Y)\cdot b\colonequals f(b^{-1}Y)$.

The \textbf{flagged Weyl module} of a diagram $D\subseteq[n]\times[k]$ is the subrepresentation
\[
\mathcal M_D \overset{\rm def}=\mathrm{Span}_\CC\left\{\prod_{j=1}^k\det\left(Y_{D_j}^{C_j}\right)\colon C\leq D\right\}
\]
of $\CC[\mathfrak b]$.

The \textbf{dual character} of a representation $M$ of $B$ is the function $\mathrm{char}_M^*\colon T\to\CC$ given by
\[
\mathrm{char}_M^*(\mathrm{diag}(x_1, \dots, x_n)) = \mathrm{tr}(\mathrm{diag}(x_1^{-1},\dots, x_n^{-1})\colon M \to M).
\]
We will write $\chi_D\colonequals\mathrm{char}_{\mathcal M_D}^*$ for the dual character of $\mathcal M_D$.

\begin{prop}[cf.~{\cite{fms2018}*{Theorem 7}}]
\label{prop:support-of-chi}
The function $\chi_D$ equals a polynomial in $\ZZ[x_1, \dots, x_n]$ whose support is $\{\wt(C)\colon C\leq D\}$.
\end{prop}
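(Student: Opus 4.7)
The plan is to recover the support from the torus weight decomposition of $\mathcal M_D$. Since $\mathcal M_D$ is a finite-dimensional $B$-subrepresentation of $\CC[\mathfrak b]$, restricting the action to the diagonal subgroup of $B$ gives a weight decomposition $\mathcal M_D=\bigoplus_\lambda (\mathcal M_D)_\lambda$, where $f\in(\mathcal M_D)_\lambda$ means $f\cdot \mathrm{diag}(x_1,\ldots,x_n)=x^\lambda f$. The dual character then reads
\[
\chi_D=\sum_\lambda \dim\bigl((\mathcal M_D)_\lambda\bigr)\cdot x^{-\lambda},
\]
so it suffices to identify which weights $\lambda$ actually occur.

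The first step is to check that each spanning vector $f_C\colonequals \prod_{j=1}^k \det(Y_{D_j}^{C_j})$ is itself a weight vector. For $b=\mathrm{diag}(x_1,\ldots,x_n)$, left multiplication by $b^{-1}$ scales the $i$-th row of $Y$ by $x_i^{-1}$, and so $\det(Y_{D_j}^{C_j})\cdot b = \prod_{i\in C_j}x_i^{-1}\cdot \det(Y_{D_j}^{C_j})$. Taking the product over $j$ yields $f_C\cdot b = x^{-\wt(C)}f_C$. Because the $f_C$ span $\mathcal M_D$, every weight of $\mathcal M_D$ has the form $-\wt(C)$ for some $C\leq D$. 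In particular $\chi_D$ lies in $\ZZ[x_1,\ldots,x_n]$ (since $\wt(C)\in\ZZ_{\geq 0}^n$), and $\supp(\chi_D)\subseteq\{\wt(C):C\leq D\}$.

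For the reverse containment I would show that $f_C\neq 0$ in $\CC[\mathfrak b]$ for every $C\leq D$, which forces $\dim\bigl((\mathcal M_D)_{-\wt(C)}\bigr)>0$. Writing $C_j=\{c_1<\cdots<c_m\}$ and $D_j=\{d_1<\cdots<d_m\}$, the hypothesis $C_j\leq D_j$ gives $c_i\leq d_i$ for every $i$, so the diagonal entries $y_{c_i,d_i}$ of $Y_{D_j}^{C_j}$ are honest upper-triangular variables of $\CC[\mathfrak b]$. Distinct permutations appearing in the Laplace expansion of $\det(Y_{D_j}^{C_j})$ produce distinct squarefree monomials in the $y_{ij}$, so the diagonal monomial $\prod_i y_{c_i,d_i}$ cannot cancel and $\det(Y_{D_j}^{C_j})\neq 0$. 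Since $\CC[\mathfrak b]$ is an integral domain, the product $f_C$ is nonzero as well.

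The main technical content is the nonvanishing of $f_C$ in the previous paragraph; everything else is routine bookkeeping with the torus action. Combining the two inclusions yields $\supp(\chi_D)=\{\wt(C):C\leq D\}$ as claimed.
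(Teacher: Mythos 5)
Your proof is correct and rests on the same framework as the paper's (decomposing $\mathcal M_D$ into torus weight spaces, noting that the $f_C=\prod_j\det(Y_{D_j}^{C_j})$ are eigenvectors of weight $-\wt(C)$ that span $\mathcal M_D$), but you have in fact filled a gap the paper leaves open. The paper's two-sentence proof only argues the containment $\supp(\chi_D)\subseteq\{\wt(C)\colon C\leq D\}$ (since the spanning eigenvectors account for all possible weights) and says nothing about why each $\wt(C)$ actually occurs, implicitly deferring the reverse containment to the cited reference \cite{fms2018}. Your third paragraph supplies exactly the missing step: because $C_j\leq D_j$ gives $c_i\leq d_i$, the diagonal monomial $\prod_i y_{c_i,d_i}$ of $\det(Y_{D_j}^{C_j})$ consists of genuine upper-triangular variables, and since distinct permutations in the Leibniz expansion yield distinct squarefree monomials (the pair $(c_i, d_{\sigma(i)})$ recovers $\sigma$), no cancellation can kill the diagonal term, so each $\det(Y_{D_j}^{C_j})\neq 0$; as $\CC[\mathfrak b]$ is a domain, $f_C\neq 0$, and the weight $-\wt(C)$ therefore has positive multiplicity. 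This makes your argument self-contained where the paper's is not.
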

\begin{proof}
The elements $\prod_j\det(Y_{D_j}^{C_j}) \in \mathcal M_D$ are simultaneous eigenvectors for the action of $T$ with eigenvalue $x^{-\wt(C)}$. Since these elements span $\mathcal M_D$, the dual character is a sum of monomials of the form $x^{\wt(C)}$ for $C\leq D$.
\end{proof}

\subsection*{Generalized permutahedra and M-convexity}

A function $z\colon 2^{[n]}\to\RR$ is called \textbf{submodular} if
\[
z(I) + z(J)\geq z(I\cup J) + z(I\cap J) \qquad\text{ for all } I,J\subseteq[n].
\]
\begin{defn}
A polytope $P\subset\RR^n$ is a \textbf{generalized permutahedron} if there is a submodular function $z\colon 2^{[n]}\to\RR$ such that $z(\emptyset) = 0$ and
\[
P = \left\{t\in\RR^n\colon\sum_{i\in I}t_i\leq z(I) \textup{ for all } I\subseteq[n] \textup{ and } \sum_{i=1}^nt_i = z([n])\right\}.
\]
\end{defn}
\begin{lem}[{\cite{frank2011}*{Theorem 14.2.8}}]
\label{lem:gp-from-rank}
Let $P\subseteq \RR^n$ be a generalized permutahedron defined by a submodular function $z$ with $z(\emptyset) = 0$. Then
\[
z(I) = \max\left\{\sum_{i\in I}p_i\colon p \in P\right\}.
\]
\end{lem}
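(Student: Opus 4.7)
The plan is to prove the two inequalities separately. The easy direction $\max\{\sum_{i\in I}p_i : p \in P\} \leq z(I)$ follows immediately from the definition of $P$, since every $p \in P$ satisfies the defining inequality $\sum_{i \in I}p_i \leq z(I)$.

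For the reverse inequality, I would exhibit an explicit point $p^* \in P$ attaining $\sum_{i \in I} p^*_i = z(I)$, produced by the greedy algorithm. Order $[n]$ as $(i_1, \ldots, i_n)$ so that $\{i_1, \ldots, i_k\} = I$, set $S_r \colonequals \{i_1, \ldots, i_r\}$, and define $p^*_{i_r} \colonequals z(S_r) - z(S_{r-1})$. Two telescoping sums immediately give $\sum_{i \in I} p^*_i = z(S_k) - z(S_0) = z(I)$ and $\sum_{i=1}^n p^*_i = z([n])$, using $z(\emptyset)=0$. So the only remaining task is to check the membership inequalities $\sum_{i \in J} p^*_i \leq z(J)$ for arbitrary $J \subseteq [n]$.

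To verify these, fix $J$ and introduce the auxiliary sets $T_r \colonequals J \cap S_r$, which grow by exactly one element when $i_r \in J$ and are constant otherwise. Whenever $i_r \in J$, one checks $S_{r-1} \cup T_r = S_r$ and $S_{r-1} \cap T_r = T_{r-1}$, so submodularity of $z$ applied to $S_{r-1}$ and $T_r$ yields $z(T_r) - z(T_{r-1}) \geq z(S_r) - z(S_{r-1}) = p^*_{i_r}$. Summing these inequalities over the indices $r$ with $i_r \in J$ telescopes the left-hand side to $z(J) - z(\emptyset) = z(J)$, giving the desired bound.

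The main obstacle is choosing the greedy ordering and the auxiliary sets $T_r$ so that a single application of submodularity at each step is enough; once this setup is in place, the whole proof is a double telescoping argument with no recursion or induction on $n$ required.
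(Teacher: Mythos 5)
Your proof is correct and complete: the paper does not prove this lemma itself but cites \cite{frank2011}, and your argument is exactly the standard greedy-algorithm proof used there (construct $p^*_{i_r}=z(S_r)-z(S_{r-1})$ along an ordering that lists $I$ first, verify feasibility via one application of submodularity per step, and telescope). Nothing is missing.
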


A set $S\subseteq\ZZ^n$ is \textbf{M-convex} if for any $x,y \in S$ and any $i\in[n]$ for which $x_i > y_i$, there is an index $j\in[n]$ satisfying $x_j < y_j$ and $x - e_i + e_j \in S$ and $y - e_j + e_i \in S$.

Note that the convex hull of an M-convex set is a generalized permutahedron, and the set of integer points of an integer generalized permutahedron is an M-convex set.

\subsection*{Schubert matroid polytopes}
A \textbf{matroid} is a pair $(E,\mathcal B)$ consisting of a finite set $E$ and a nonempty collection of subsets $\mathcal B$ of $E$, called the \textbf{bases} of $M$. The set $\mathcal B$ is required to satisfy the \textbf{basis exchange axiom}: If $B_1, B_2 \in\mathcal B$ and $b_1 \in B_1 \setminus B_2$, then there is $b_2 \in B_2 \setminus B_1$ such that $B_1 \setminus b_1 \cup b_2 \in \mathcal B$. 

\begin{defn}
Fix positive integers $1 \leq s_1 < \dots < s_r \leq n$. The \textbf{Schubert matroid} $\SM_n(s_1, \dots, s_r)$ is the matroid whose ground set is $[n]$ and whose bases are the sets \linebreak $\{a_1, \dots, a_r\}$ with $a_1 < \dots < a_r$ such that $a_1 \leq s_1, \dots, a_r \leq s_r$.
\end{defn}

Given a matroid $M = ([n],\mathcal B)$ and a basis $B \in \mathcal B$, let $\zeta^B = (\zeta^B_1, \dots, \zeta^B_n)$ be the vector with $\zeta^B_i = 1$ if $i \in B$ and $\zeta^B_i = 0$ if $i \not\in B$. The \textbf{matroid polytope} $P(M)$ of $M$ is the convex hull $\mathrm{conv}\{\zeta^B\colon B\in\mathcal B\}$.

The \textbf{rank function} of $M$ is the function $r_M\colon 2^E \to \ZZ_{\geq 0}$ defined by $r_M(S) = \max\{\#(S\cap B)\colon B \in \mathcal B\}$. The function $r_M$ is submodular and $r_M(\emptyset) = 0$. The matroid polytope $P(M)$ is a generalized permutahedron, defined by the submodular function $r_M$.
\section{Bubbling and supports of Grothendieck polynomials}
\label{sec:bubbling-and-supports}
We establish basic properties of bubbling diagrams, including a non-recursive characterization of the set of bubbling diagrams (Lemma~\ref{lem:BD-characterization}), and we prove Theorem~\ref{thm:main1} by constructing weight preserving maps between the set of bubbling diagrams and the set of marked bumpless pipe dreams.
\begin{defn}
\label{defn:bubbling-diagram}
A \textbf{bubbling diagram} is a triple $(D, r, F)$ where $D\subseteq[n]\times[k]$ is a diagram, $r\colon D\to \ZZ_{\geq 0}$ is a function, and $F \subseteq D$ is a collection of squares in $D$ satisfying the following properties:
\begin{itemize}
\item If $(i,j)\in F$, then there exists $(i',j)\in D\setminus F$ with $i' < i$. For the maximal $i' < i$ with $(i',j) \in D\setminus F$, the equality $r(i,j) - r(i',j) = i - i'$ holds.

\item If $(i,j)\in F$ and $(i,k)\in F$, then $r(i,j)\neq r(i,k)$.
\end{itemize}

We refer to the squares $(i,j) \in D\setminus F$ as \textbf{live squares}, to the squares $(i,j) \in F$ as \textbf{dead squares}, and to the squares $(i,j)\not\in D$ as \textbf{empty squares}. (A dead square cannot be bubbled or K-bubbled but impacts the K-bubbleability of certain other squares; see Definitions~\ref{defn:bubbling-move} and~\ref{defn:K-bubbling-move}.)

When we draw a bubbling diagram, the live squares $D\setminus F$ will be colored green, the dead squares $F$ will be colored grey, and each square $(i,j) \in D$ will be labelled by the value of $r(i,j)$. See Figure~\ref{fig:rothebd1423}.

The \textbf{rank} of a square $(i,j)\in D$ is the value $r(i,j)$. The \textbf{weight} of a bubbling diagram $\mathcal D = (D, r, F)$ is $\wt(\mathcal D)\overset{\rm def}=\wt(D)$.
\end{defn}

\begin{example}
The \textbf{Rothe bubbling diagram} of a permutation $w$ is the bubbling diagram $\mathcal D(w)\colonequals(D(w), r_{D(w)}, \emptyset)$.

For example, the Rothe bubbling diagram for $w = 1423$ is shown in Figure~\ref{fig:rothebd1423}.
\begin{figure}[ht]
\includegraphics{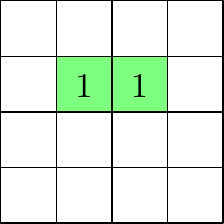}
\caption{The Rothe bubbling diagram for $w = 1423$. All squares in $D(w)$ are live.}
\label{fig:rothebd1423}
\end{figure}
\end{example}

\begin{defn}[Bubbling move]
\label{defn:bubbling-move}
Let $\mathcal D = (D, r, F)$ be a bubbling diagram. Suppose that $(i,j)$ is a live square and that $(i-1,j)$ is an empty square.

Then, a \textbf{bubbling move} at $(i,j)$ produces the bubbling diagram $\mathcal D' = (D', r', F')$ where:
\begin{align*}
D' &\colonequals D\setminus(i,j)\cup(i-1,j)\\
r'(x,y) &\colonequals \begin{cases} r(x,y) &\textup{ if } (x,y) \neq (i-1,j)\\ r(i,j) - 1&\textup{ if } (x,y) = (i-1,j)\end{cases}\\
F' &\colonequals F.
\end{align*}
In other words, we ``bubble up'' a live square $(i,j)$ to $(i-1,j)$, decreasing the rank of the square by $1$ in the process.
\end{defn}
\begin{example}
Let $w = 1423$. The bubbling diagram obtained from $\mathcal D(w)$ by applying a bubbling move at $(2,3)\in D(w)$ is shown in Figure~\ref{fig:bubbled1423}.
\begin{figure}[ht]
\includegraphics{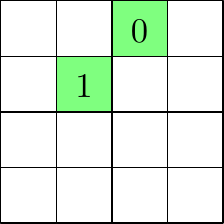}
\caption{A bubbling move applied to $(2,3) \in D(w)$, $w = 1423$}
\label{fig:bubbled1423}
\end{figure}
\end{example}

\begin{defn}[K-bubbling move]
\label{defn:K-bubbling-move}
Let $\mathcal D = (D, r, F)$ be a bubbling diagram. Suppose that $(i,j)$ is a live square and that $(i-1,j)$ is an empty square. Assume furthermore that there are no dead squares $(i,k) \in F$ for which $r(i,j) = r(i,k)$.

Then, a \textbf{K-bubbling move} at $(i,j)$ produces the bubbling diagram $\mathcal D' = (D', r', F')$ where:
\begin{align*}
D' &\colonequals D\cup (i-1,j)\\
r'(x,y)&\colonequals \begin{cases} r(x,y) &\textup{ if } (x,y) \neq (i-1,j)\\ r(i,j) - 1&\textup{ if } (x,y) = (i-1,j)\end{cases}\\
F' &\colonequals F\cup(i,j).
\end{align*}
In other words, we ``bubble up'' a live square $(i,j)$ to $(i-1,j)$, decreasing the rank of the square by $1$ in the process, while also ``leaving behind'' a dead copy of the original square $(i,j)$.
\end{defn}
\begin{example}
Let $w = 1423$. The bubbling diagram obtained from $\mathcal D(w)$ by applying a K-bubbling move at $(2,3)\in D(w)$ is shown in Figure~\ref{fig:Kbubbled1423}.
\begin{figure}[ht]
\includegraphics{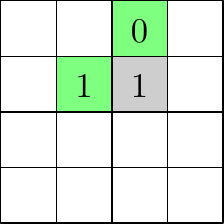}
\caption{A K-bubbling move applied to $(2,3) \in D(w)$, $w = 1423$}
\label{fig:Kbubbled1423}
\end{figure}
\end{example}

\begin{defn}
\label{defn:bubbling-algorithm}
Let $\mathcal D$ be a bubbling diagram. Define $\BD(\mathcal D)$ to be the set of all bubbling diagrams generated from $\mathcal D$ by a series of bubbling moves and K-bubbling moves. For $w \in S_n$, let $\BD(w)\colonequals \BD(\mathcal D(w))$.
\end{defn}
\begin{example}
Let $w = 1423$. Then $\BD(w)$ consists of the bubbling diagrams in Figure~\ref{fig:bd1423}. Note that the two squares in $D(w)$ cannot both be K-bubbled.

\begin{figure}[ht]
\includegraphics{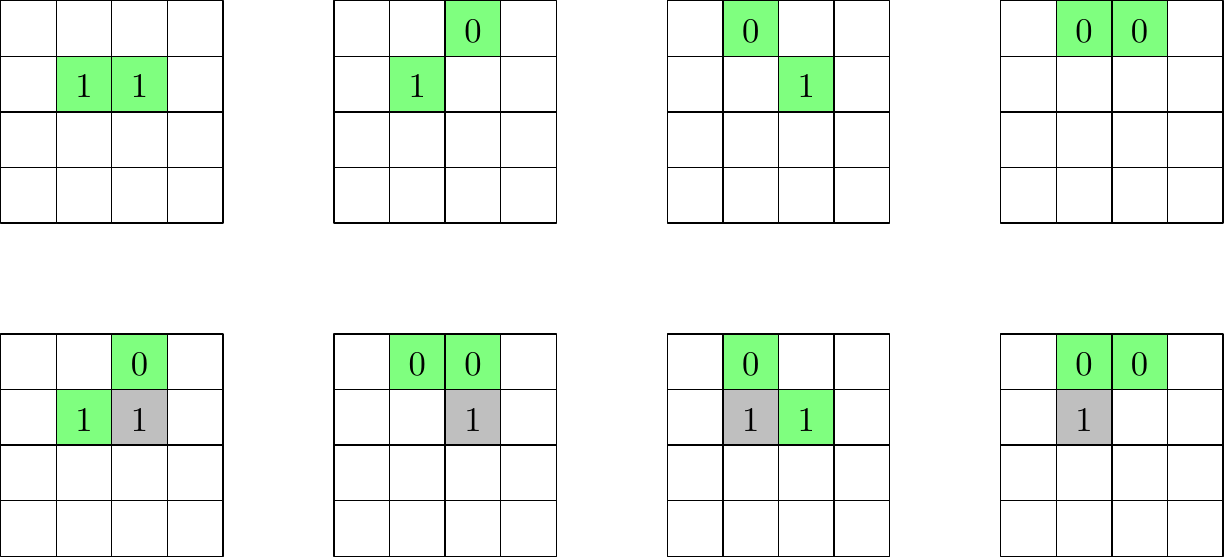}
\caption{The set $\BD(w)$ for $w = 1423$.}
\label{fig:bd1423}
\end{figure}
\end{example}

\begin{defn}
Let $\mathcal D = (D, r, F)$ be a bubbling diagram where $D\subseteq[n]\times[k]$. For $j\in[k]$, define $\mathcal D_j$ to be the $j$-th column of $\mathcal D$, i.e.\ $\mathcal D_j\colonequals (D_j, r|_{D_j}, F_j)$.
\end{defn}
\begin{defn}
Let $D$ be a diagram, and let $r\colon D\to\ZZ_{\geq 0}$ be a function. We say that two squares $(i,j), (i',j')\in D$ are \textbf{linked} if $i - i' = r(i,j) - r(i',j')$. A \textbf{linking class} is an equivalence class of linked squares.
\end{defn}
\begin{example}
Let $w = 178925(10)346$. Then, $\{(6,3), (6,4), (7,6)\}$ is a linking class in $\mathcal D(w)$. See Figure~\ref{fig:linking-class}.
\begin{figure}[ht]
\includegraphics{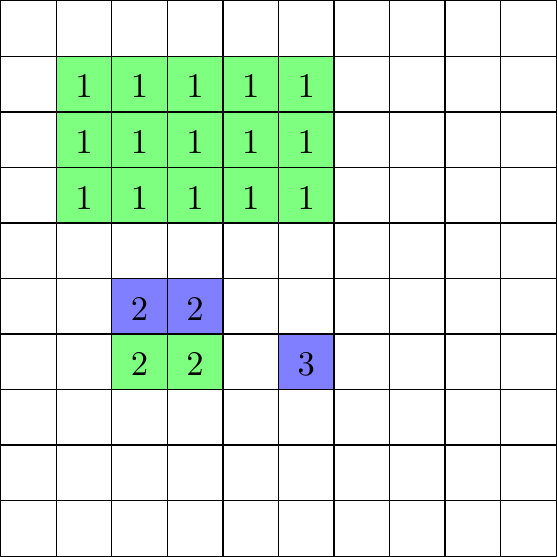}
\caption{The squares in the linking class $\{(6,3), (6,4), (7,6)\}$ are shaded blue.}
\label{fig:linking-class}
\end{figure}
\end{example}
\begin{lem}
\label{lem:propagating-linkedness}
Fix $\mathcal D = (D,r,F)$, and let $\mathcal D' = (D',r',F')\in\BD(\mathcal D)$. If the $a_j$-th highest live square in $D'_j$ is linked to the $a_k$-th highest live square in $D'_k$, then the $a_j$-th highest live square in $D_j$ is linked to the $a_k$-th highest live square in $D_k$.
\end{lem}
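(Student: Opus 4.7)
The plan is to reduce to a single bubbling or K-bubbling move by induction on the number of moves, and to track the quantity $\lambda(i,j) \colonequals i - r(i,j)$ (call it the \emph{link invariant} of the square $(i,j)$). Directly from the definition, two squares are linked if and only if they share the same link invariant. Thus it suffices to prove: if $\mathcal D'$ is obtained from $\mathcal D$ by one move, then the $a$-th highest live square in column $j$ of $\mathcal D$ and the $a$-th highest live square in column $j$ of $\mathcal D'$ have the same link invariant (for every $j$ and $a$).

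First I would verify that a single move in column $j$ preserves the number of live squares in column $j$ and does not alter any other column. For a bubbling move at $(i,j)$, the set $D_j \setminus F_j$ of live squares in column $j$ is modified only by replacing $(i,j)$ with $(i-1,j)$; since $(i-1,j)$ was empty, the top-to-bottom ordering of the live squares is preserved and the affected square remains the $a$-th highest, for the same value of $a$. Its link invariant changes from $i - r(i,j)$ to $(i-1) - (r(i,j)-1) = i - r(i,j)$, so it is unchanged. For a K-bubbling move at $(i,j)$, the square at $(i,j)$ transitions from live to dead while a new live square appears at $(i-1,j)$ with rank $r(i,j)-1$; once again the top-to-bottom ordering of live squares in column $j$ is unchanged (because $(i-1,j)$ was empty), so the $a$-th highest live square in column $j$ of $\mathcal D'$ sits at $(i-1,j)$ with link invariant $(i-1) - (r(i,j)-1) = i - r(i,j)$, matching the link invariant of the $a$-th highest live square of $\mathcal D$. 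Columns other than $j$ are untouched, so their live squares retain both position and rank.

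Combining these observations for each column, a single move preserves the link invariant of the $a$-th highest live square in every column, so the hypothesis on $\mathcal D'$ in the statement transfers back to $\mathcal D$ after one move, and by induction on the length of the sequence of moves it transfers back to the original $\mathcal D$. The only real subtlety is the bookkeeping in Step 1: one must confirm that in each of the two move types the $a$-th highest live square in the affected column really does correspond to the $a$-th highest live square after the move, which follows immediately from the fact that the new row index $(i-1)$ is strictly between the row indices of the neighboring live squares (since $(i-1,j)$ was empty before the move). No deeper obstacle arises.
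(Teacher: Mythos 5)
Your proposal is correct and is essentially the paper's argument: the paper's key step is the pair of equalities $r'(i_j',j)-r(i_j,j)=i_j'-i_j$ (asserted from $\mathcal D'\in\BD(\mathcal D)$), which is exactly your statement that the link invariant $i-r(i,j)$ of the $a$-th highest live square in each column is preserved by bubbling and K-bubbling moves. You merely make explicit the induction on moves and the one-move verification that the paper leaves implicit.
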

\begin{proof}
Let $i_j'$ and $i_j$ denote the $a_j$-th highest live squares in $D'_j$ and $D_j$ respectively, and let $i_k'$ and $i_k$ denote the $a_k$-th highest live squares in $D'_k$ and $D_k$ respectively. By assumption,
\[
r'(i_j',j) - r'(i_k',k) = i_j' - i_k'.
\]
Since $\mathcal D' \in \BD(\mathcal D)$, the equalities
\[
r'(i_j',j) - r(i_j,j) = i_j' - i_j \qquad\textup{ and } \qquad r'(i_k',k) - r(i_k,k) = i_k' - i_k
\]
hold. It follows that
\[
r(i_j,j) - r(i_k,k) = i_j - i_k.\qedhere
\]
\end{proof}
\begin{lem}
\label{lem:vexillary-diagram-column-copies}
Let $w\in S_n$ be a vexillary permutation. Let $(i_1, j_1), (i_2, j_2)\in D(w)$ be two linked squares, and suppose that $j_1 < j_2$. Then:
\begin{enumerate}
\item $i_1 \leq i_2$, and

\item If $(i_1 - 1, j_1), (i_2 - 1, j_2)\not\in D(w)$, then the $j_1$-th and $j_2$-th columns of $D(w)$ agree above the $(i_1-1)$-th row, that is, $(i,j_1) \in D(w)$ if and only if $(i,j_2) \in D(w)$ for all $i \leq i_1-1$ and $r_{D(w)}(i,j_1) = r_{D(w)}(i,j_2)$.
\end{enumerate}
\end{lem}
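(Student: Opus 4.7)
Both parts are proved by contradiction, obtained by writing the linking equation as a telescoping sum and extracting constraints on the one-line notation of $w$.

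For Part 1, suppose $i_1 > i_2$. Decomposing
\[
r_{D(w)}(i_1, j_1) - r_{D(w)}(i_2, j_2) = \bigl(r_{D(w)}(i_1, j_1) - r_{D(w)}(i_2, j_1)\bigr) + \bigl(r_{D(w)}(i_2, j_1) - r_{D(w)}(i_2, j_2)\bigr),
\]
the first summand equals $|\{k \in [i_2, i_1-1] : w(k) < j_1\}|$, which is at most $i_1 - i_2$, and the second is at most $0$ because $j_1 < j_2$. Linking forces both summands to be extremal, so every $k \in [i_2, i_1-1]$ satisfies $w(k) < j_1$; taking $k = i_2$ gives $w(i_2) < j_1 < j_2$, contradicting the fact that $(i_2, j_2) \in D(w)$ forces $w(i_2) > j_2$. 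This argument uses no vexillarity.

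For Part 2, I may assume $i_1 \geq 2$ (otherwise the claim is vacuous) and, by Part 1, $i_1 \leq i_2$. The hypotheses rewrite as $w(i_1-1) < j_1$ and $w(i_2-1) < j_2$: for each $\alpha$, $(i_\alpha, j_\alpha) \in D(w)$ gives $i_\alpha - 1 < w^{-1}(j_\alpha)$, so $(i_\alpha - 1, j_\alpha) \notin D(w)$ collapses to $w(i_\alpha - 1) < j_\alpha$. A short direct check shows that the conclusion of Part 2 reduces to the single assertion: no $k \leq i_1 - 1$ satisfies $j_1 < w(k) < j_2$. Suppose toward contradiction that such a $k$ exists, and set $v = w(k) \in (j_1, j_2)$; then $w(i_1-1) < j_1 < v$ forces $k \leq i_1 - 2$.

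Decomposing the linking equation the other way,
\[
r_{D(w)}(i_2, j_2) - r_{D(w)}(i_1, j_1) = \underbrace{|\{k' < i_1 : j_1 < w(k') < j_2\}|}_{A'} + \underbrace{|\{k' \in [i_1, i_2-1] : w(k') < j_2\}|}_{B'} = i_2 - i_1,
\]
and $A' \geq 1$ by the assumption. Hence $B' < i_2 - i_1$, producing some $k'' \in [i_1, i_2 - 1]$ with $w(k'') > j_2$; the inequality $w(i_2-1) < j_2$ refines this to $k'' \leq i_2 - 2$. Now $k < i_1 - 1 < k'' < w^{-1}(j_2)$ is a strictly increasing tuple of positions (using $k \leq i_1 - 2$ and $k'' \geq i_1$) whose values $v,\, w(i_1-1),\, w(k''),\, j_2$ satisfy $w(i_1-1) < v < j_2 < w(k'')$, which is a 2143 pattern in $w$, contradicting vexillarity.

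The main obstacle is the second step of Part 2: the linking equation alone only yields $A' + B' = i_2 - i_1$, which does not directly force $A' = 0$. The hypotheses $(i_1-1, j_1), (i_2-1, j_2) \notin D(w)$ are essential because they provide the small value $w(i_1-1)$ for the second slot of the 2143 pattern and force $k''$ to land strictly before $i_2 - 1$; vexillarity is invoked exactly once, at the very end, to convert the pattern into a contradiction.
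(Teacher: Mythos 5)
Your proof is correct. For Part~1, your telescoping decomposition is the same argument the paper uses, and you are right that vexillarity is not needed there.

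For Part~2 the global strategy also matches the paper's: reduce the two-sided conclusion (containment equivalence plus rank equality) to the single assertion that no $k \leq i_1 - 1$ has $j_1 < w(k) < j_2$, and then convert a hypothetical such $k$ into a $2143$ pattern. Your reduction step is correct, and in fact you spell it out more explicitly than the paper does. However, the step where you derive $k''$ from the decomposition $A' + B' = i_2 - i_1$ is unnecessary work: the square $(i_2, j_2) \in D(w)$ already supplies a position $i_2$ with $w(i_2) > j_2$ and $i_1 - 1 < i_2 < w^{-1}(j_2)$, so the pattern $k < i_1 - 1 < i_2 < w^{-1}(j_2)$ with values $v,\, w(i_1-1),\, w(i_2),\, j_2$ is already $2143$. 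This is what the paper does, and it never touches the linking equation in Part~2. Relatedly, you claim the hypothesis $(i_2-1, j_2) \not\in D(w)$ is essential ``because it forces $k''$ to land strictly before $i_2 - 1$,'' but that refinement is not used anywhere in your pattern argument --- $k'' \leq i_2 - 1 < i_2 < w^{-1}(j_2)$ already suffices --- and indeed neither your proof nor the paper's genuinely needs that hypothesis. So the overstatement of its role is a small inaccuracy, though it does not affect correctness.
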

\begin{proof}
We first show item (1). Suppose that $i_1 > i_2$. Since $j_1 < j_2$, we know
\[
r_{D(w)}(i_2,j_2) \geq |\{(k,w(k))\colon k<i_2\textup{ and } w(k)< j_1\}|.
\]
Thus
\begin{align*}
r_{D(w)}(i_2, j_2) - r_{D(w)}(i_1, j_1) &\geq|\{(k,w(k))\colon k < i_2 \textup{ and } w(k) < j_1\}| \\
&\phantom{aaaaaaaaa}-|\{(k,w(k))\colon k<i_1\textup{ and } w(k) < j_1\}|\\
&= -|\{(k,w(k))\colon i_2 \leq k < i_1 \textup{ and } w(k) < j_1\}| \\
&\geq i_2 - i_1.
\end{align*}
If equality occurs, then $w(k) < j_1$ for all $i_2 \leq k < i_1$; in particular, $w(i_2) < j_1 < j_2$. This contradicts the fact that $(i_2,j_2)\in D(w)$.

We now show item (2). Because $(i_1,j_1)\in D(w)$, we know that $(i,j_2)\in D(w)$ implies $(i,j_1) \in D(w)$ for all $i < i_1$. If $(i,j_1)\in D(w)$ and $(i,j_2)\not\in D(w)$, then $i < i_1 - 1 < i_2 < w^{-1}(j_2)$ forms a $2143$ pattern.
\end{proof}
\begin{defn}
Fix a bubbling diagram $\mathcal D = (D, r, F)$. Let $F'\subseteq D'$ be diagrams. We say that $(D', F')$ is \textbf{$\mathcal D$-admissible} if:
\begin{enumerate}
\item $F'\supseteq F$,

\item $D'\setminus F' \leq D \setminus F$,

\item For any $(i,j) \in F'\setminus F$, there is $(i',j)\in D'\setminus F'$ with $i' < i$. If the $m$-th highest live square in $D'_j\setminus F'_j$ is the live square that is immediately above $(i,j)$, then the $m$-th highest live square in $D_j\setminus F_j$ is below row $i$,

\item Suppose that $(i,j)\in F$ and that there are $a$ live squares in $D_j$ above row $i$ and $b$ live squares in $D_j$ below row $i$. Then there are $a$ live squares in $D'_j$ above row $i$ and $b$ live squares in $D'_j$ below row $i$,

\item Let $(i,j), (i,k)\in F'$ be dead squares in the same row. Suppose that the $m_j$-th highest square in $D'_j\setminus F'_j$ is the live square immediately above $(i,j)$ and that the $m_k$-th highest square in $D'_k \setminus F'_k$ is the live square immediately above $(i,k)$. Then the $m_j$-th highest square in $D_j$ and the $m_k$-th highest square in $D_k$ are not linked.
\end{enumerate}
\end{defn}
In Lemma~\ref{lem:BD-characterization}, we will show that $\mathcal D$-admissibility is equivalent to membership in $\BD(\mathcal D)$. To get there, we describe a systematic way to generate a given bubbling diagram $\mathcal D'\in\BD(\mathcal D)$ (Definition~\ref{defn:canonical-bubbling-sequence}); the legality of the construction is the content of Lemma~\ref{lem:canonical-bubbling-sequence}.

\begin{defn}
Let $\mathcal D = (D,r, F)$ be a bubbling diagram, and let $F'\subseteq D'$ be a pair of diagrams. Let $\mathcal D' = (D', F')$ and $\mathcal D'_j = (D'_j, F'_j)$. We say that $\mathcal D_j$ and $\mathcal D'_j$ \textbf{weakly agree below row $s$} if:
\begin{itemize}
\item $D_j\cap\{s, \dots, n\} = D'_j \cap \{s, \dots,n\}$, and

\item $F_j\cap\{s, \dots,n\} = F'_j\cap\{s, \dots,n\}$.
\end{itemize}
We write $s(\mathcal D_j, \mathcal D'_j)$ to mean the minimal integer $s$ so that $\mathcal D_j$ and $\mathcal D'_j$ weakly agree below row $s$. If no such row exists, then we set $s(\mathcal D_j, \mathcal D'_j)\colonequals n+1$.
\end{defn}
\begin{defn}
\label{defn:canonical-bubbling-sequence}
Let $\mathcal D = (D, r, F)$ be a bubbling diagram. Suppose that $F'\subseteq D'$ are $\mathcal D$-admissible. The \textbf{canonical bubbling sequence} of $(D',F')$ with respect to $\mathcal D$ is the sequence $(\mathcal D^n, \dots, \mathcal D^0)$ defined by:
\begin{itemize}
\item $\mathcal D^n \colonequals\mathcal D$

\item For $m \geq 1$, $\mathcal D^{m-1}$ is obtained from $\mathcal D^m = (D^m, r^m, F^m)$ by applying the following bubbling and K-bubbling moves. For each column $j$ with an empty square in row $m$ directly above a live square in row $m+1$, let $k_j\colonequals s(\mathcal D^m_j,\mathcal D'_j)$. If $k_j > m+1$ and $k_j-1 \not\in D'_j$, then apply bubbling moves at $m+1,m+2, \dots, k_j-1$; if $k_j > m+1$ and $k_j -1 \in F'_j$, then apply bubbling moves at $m+1, m+2, \dots, k_j-2$ and then a K-bubbling move at $k_j-1$. 
\end{itemize}
See Figure~\ref{fig:canonical-bubbling-sequence} for an example.
\end{defn}
\begin{figure}[ht]
\includegraphics[scale=0.7]{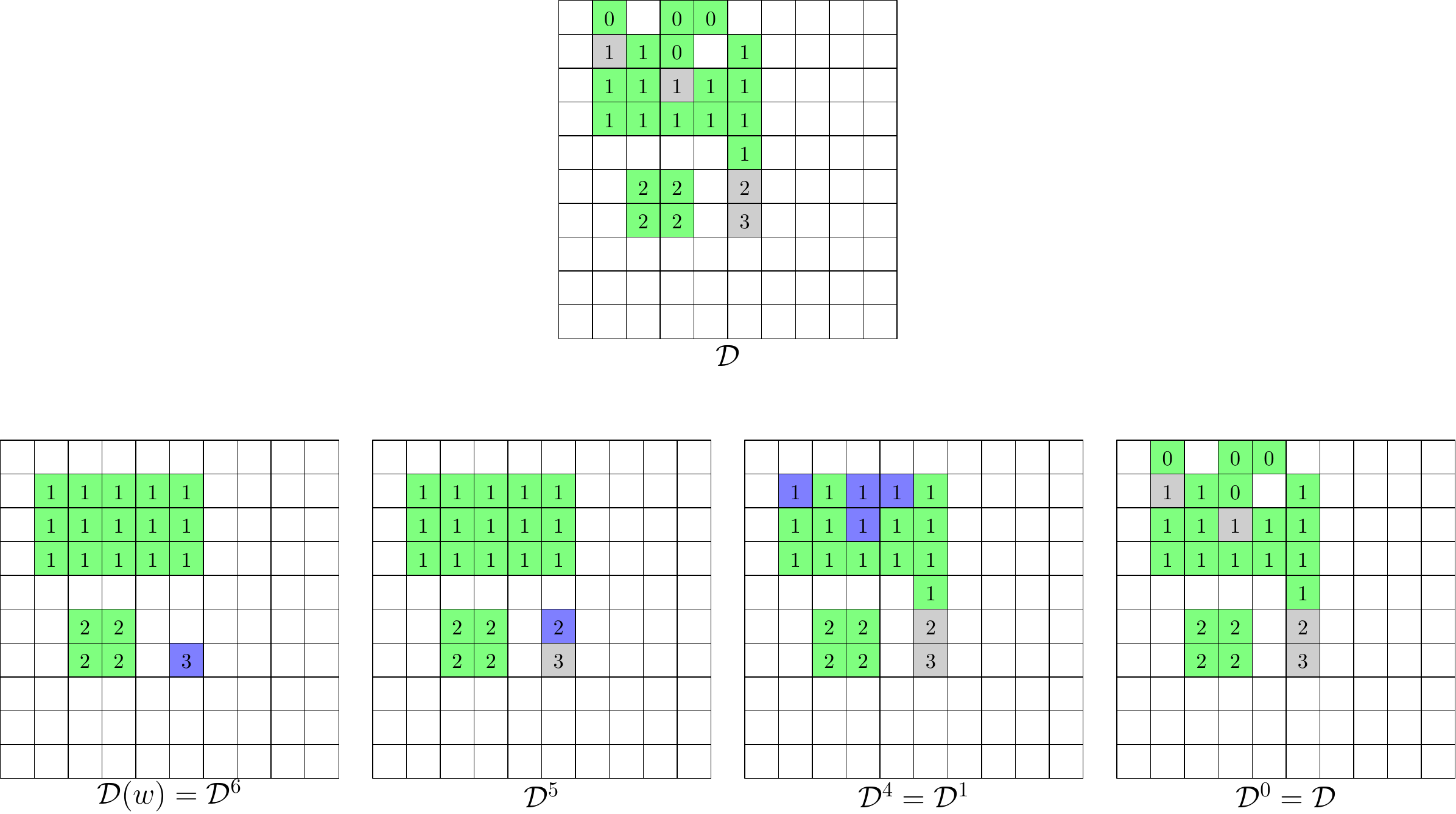}
\caption{The canonical bubbling sequence for a bubbling diagram $\mathcal D\in\BD(w)$ with $w = 178925(10)346$. The squares which are bubbled or K-bubbled are shaded blue.}
\label{fig:canonical-bubbling-sequence}
\end{figure}
To ensure the legality of the bubbling moves in Definition~\ref{defn:canonical-bubbling-sequence}, we use the following lemma.
\begin{lem}
\label{lem:canonical-bubbling-sequence}
Let $\mathcal D = (D,r,F)$ be a bubbling diagram. Suppose that $F'\subseteq D'$ is $\mathcal D$-admissible, and let $(\mathcal D^n, \dots, \mathcal D^0)$ denote the canonical bubbling sequence. Write $\mathcal D^m = (D^m, r^m, F^m)$. Suppose that for some $m$ and $j$, we have $(m,j)\not \in D^m$ and $(m+1,j) \in D^m\setminus F^m$. Let $\ell_{j,m} \geq m+1$ be maximal so that $(m+1,j), \dots, (\ell_{j,m},j) \in D^m\setminus F^m$. Let $k_{j,m}\colonequals s(\mathcal D^m_j, \mathcal D'_j)$. Then:
\begin{enumerate}
\item $\ell_{j,m} \geq k_{j,m} - 1$,

\item Suppose that $k_{j,m}-1\in F'_j$ and that for some $h\neq j$, either $k_{j,m}-1\in F^m_h$ or $m\not\in D^m_h$, $m+1\in D^m_h$, $k_{h,m}=k_{j,m}$, and $k_{h,m}-1 \in F'_h$. Then $r^m(k_{j,m}-1,j)\neq r^m(k_{h,m}-1,h)$.
\end{enumerate}
Furthermore, $D^0 = D'$ and $F^0 = F'$.
\end{lem}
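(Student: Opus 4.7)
The plan is to prove parts (1), (2), and the furthermore statement simultaneously by reverse induction on $m$. The inductive hypothesis at step $m$ is that $\mathcal D^m \in \BD(\mathcal D)$ and that the canonical bubbling sequence so far has preserved a bookkeeping correspondence: each live square of $D^m_j \setminus F^m_j$ is identified with its counterpart in $D_j \setminus F_j$ as the ``$t$-th highest live square in its column,'' and similarly for dead squares. The base case $m = n$ is trivial, since step $n$ executes no moves (there is no row $n+1$). The furthermore claim $\mathcal D^0 = \mathcal D'$ follows from running the invariant down to $m = 0$.

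For part (1), I argue by contradiction. Suppose $\ell_{j,m} < k_{j,m}-1$, so that some row $r$ with $m+1 \leq r \leq k_{j,m}-1$ is non-live (empty or dead) in $\mathcal D^m_j$. Using the identification of live and dead squares in $\mathcal D^m_j$ with those in $\mathcal D_j$, admissibility conditions (1)--(4) force both the live squares and any newly added dead squares of $\mathcal D'_j$ to sit at or above their counterparts in $\mathcal D_j$, with compatible row counts. A gap in $\mathcal D^m_j$'s top block below row $k_{j,m}-1$ then produces a discrepancy between $\mathcal D^m_j$ and $\mathcal D'_j$ at some row strictly less than $k_{j,m}$, contradicting the minimality in the definition of $k_{j,m}$.

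For part (2), the goal is to show that the two dead squares in row $k_{j,m}-1$ at the end of step $m$ have distinct ranks, via admissibility condition (5). In both subcases of the hypothesis, the final diagram $\mathcal D'$ contains dead squares at $(k_{j,m}-1, j)$ and $(k_{j,m}-1, h)$, and the live squares immediately above them in $\mathcal D'_j$ and $\mathcal D'_h$ are, under the bookkeeping correspondence, the $m_j$-th and $m_h$-th highest live squares of $D_j \setminus F_j$ and $D_h \setminus F_h$ respectively. Admissibility (5) gives non-linkage of these indexed live squares in $\mathcal D$, and the contrapositive of Lemma~\ref{lem:propagating-linkedness} propagates non-linkage forward to $\mathcal D^m$, where the corresponding live squares are precisely those at $(k_{j,m}-1, j)$ and $(k_{h,m}-1, h)$ about to be K-bubbled. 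Because these two squares share a row, non-linkage is equivalent to the desired inequality of ranks.

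To propagate the invariant from step $m$ to step $m-1$, I verify that the prescribed moves correctly extend the agreement between $\mathcal D^{m-1}_j$ and $\mathcal D'_j$ to the newly processed rows: in each processed column $j$, rows $m, m+1, \ldots, k_{j,m}-2$ become live and row $k_{j,m}-1$ becomes empty (if $k_{j,m}-1 \notin D'_j$) or dead (if $k_{j,m}-1 \in F'_j$), matching $\mathcal D'_j$ exactly on those rows. I expect the main obstacle to be the careful bookkeeping required in part (2), namely verifying that the live squares immediately above the dead squares at $(k_{j,m}-1, j)$ and $(k_{j,m}-1, h)$ in $\mathcal D'$ are identified under the ``$t$-th highest'' correspondence with precisely the live squares referenced by admissibility condition (5), with the correct indices $m_j$ and $m_h$.
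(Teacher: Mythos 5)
Your overall strategy---a simultaneous reverse induction establishing $\mathcal D^m\in\BD(\mathcal D)$, with admissibility conditions (1)--(4) driving part (1) and condition (5) together with Lemma~\ref{lem:propagating-linkedness} driving part (2)---is the same as the paper's, and your treatment of part (2) is essentially right. But the argument you give for part (1) runs the contradiction in the wrong direction. By definition $k_{j,m}$ is the \emph{minimal} $s$ such that $\mathcal D^m_j$ and $\mathcal D'_j$ weakly agree on rows $\{s,\dots,n\}$, so minimality \emph{guarantees} a discrepancy at row $k_{j,m}-1$; exhibiting ``a discrepancy at some row strictly less than $k_{j,m}$'' therefore contradicts nothing. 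What must be shown is the opposite: if the contiguous live block ends at $\ell_{j,m}<k_{j,m}-1$, then every live square of $D^m_j$ in rows $\geq \ell_{j,m}+1$ lies in a contiguous run directly beneath a dead square of $F^m_j$ (using that row $m$ is the topmost empty-over-live configuration in the column), whence conditions (1), (2), and (4) lock those squares in place and condition (3) forbids new dead squares among them; this forces weak \emph{agreement} below row $\ell_{j,m}+1<k_{j,m}$, contradicting the minimality of $k_{j,m}$. You cite the right admissibility conditions but draw the inverted conclusion, so as written the step fails.

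Your propagation of the invariant also overclaims: after step $m$, rows $m,\dots,k_{j,m}-2$ of column $j$ are all live but need \emph{not} match $\mathcal D'_j$ (take $D_j=\{3\}$ and $D'_j=\{1\}$: after the first processing step row $2$ is live in $\mathcal D^1_j$ but empty in $\mathcal D'_j$). Only row $k_{j,m}-1$ and the rows below it are newly matched. Consequently the ``furthermore'' claim does not follow merely from tracking the processed rows down to $m=0$: the top block of each column of $\mathcal D^0$ (the rows above the first remaining empty-over-live gap, which the algorithm never touches) must separately be shown to coincide with $\mathcal D'_j$. The paper does this by verifying that $(D',F')$ is $\mathcal D^0$-admissible and arguing that admissibility pins down that untouched block; this piece is missing from your plan.
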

\begin{proof}
We first show item (1) using induction. Let $m_1 > \dots > m_h$ be the integers for which $\mathcal D^{m_i}_j \neq \mathcal D^{m_i-1}_j$. Thus, $m_1$ is maximal so that $m_1\not\in D_j$ and $m_1+1\in D_j\setminus F_j$. It follows that if $i > \ell_{j,m_1}$ and $i\in D_j\setminus F_j$, then there is $m'<i$ with $m'\in F_j$ and $m'+1, m'+2, \dots, i \in D_j\setminus F_j$. Condition (1) implies that $m'\in F'_j$, and conditions (2) and (4) together imply
\[
(D_j\setminus F_j)\cap\{\ell_{j,m_1}+1, \dots, n\} = (D'_j\setminus F'_j)\cap\{\ell_{j,m_1}+1,\dots,n\}.
\]
Then condition (3) implies that $(F'\setminus F)\cap\{\ell_{j,m_1}+1,\dots,n\} = \emptyset$. We conclude that $\ell_{j,m_1}\geq k_{j,m_1} - 1$.

Now assume that $\ell_{j,m_i} \geq k_{j,m_i} - 1$. By construction of the canonical bubbling sequence, $\mathcal D^{m_{i+1}} = \mathcal D^{m_i-1}_j$ weakly agrees with $\mathcal D'_j$ below row $k_{j,m_i}-1$. Furthermore, $k_{j,m_i}-1\not\in D'_j\setminus F'_j$, so $\ell_{j,m_{i+1}}\leq k_{j,m_i}-2$. Thus 
\[
k_{j,m_{i+1}}-1 \leq k_{j,m_i}-2 \leq \ell_{j,m_{i+1}}.
\]

We now show item (2). By induction, we may assume that $\mathcal D^m\in\BD(\mathcal D)$. Suppose that $k_{j,m}\in D^m_j\setminus F^m_j$ is the $a_j$-th highest live square in its column. Define $a_h$ as follows: if $(k_{j,m}-1,h)\in F^m$, then suppose that the $a_h$-th highest square in $D^m_h\setminus F^m_h$ is the live square immediately above $(k_{j,m}-1,h)$; if $k_{j,m}-1\in D^m_h\setminus F^m_h$, then suppose that $k_{j,m}-1$ is the $a_h$-th highest live square in its column. If $r^m(k_{j,m}-1,j)\neq r^m(k_{h,m}-1,h)$, then the $a_j$-th highest live square in $D^m_j$ and the $a_h$-th highest live square in $D^m_h$ are linked. Lemma~\ref{lem:propagating-linkedness} implies that the $a_j$-th highest live square in $D_j$ and the $a_h$-th highest live square in $D_h$ are linked, contrary to condition (5) in Lemma~\ref{lem:BD-characterization}.

We now show that $D^0 = D'$ and $F^0 = F'$. We claim that $F'\subseteq D'$ is $\mathcal D^0_j$-admissible, that is:
\begin{enumerate}
\item The canonical bubbling sequence introduces a dead square $(i,j)\in F^0\setminus F$ only if $(i,j)\in F'\setminus F$, so $F'\supseteq F^0$,

\item The canonical bubbling sequence bubbles the $m$-th highest live square $i\in D^m_j$ only if the $m$-th highest live square in $D'_j$ is above row $i$, so $D'\setminus F'\leq D^0\setminus F^0$,

\item Since $F'\supseteq F^0$, for any $(i,j)\in F'\setminus F^0$, there is $(i',j)\in D'\setminus F'$ with $i' < i$. If the $m$-th and $(m+1)$-th highest live squares in $D^0_j\setminus F^0_j$ are in row $k$ and $\ell$ respectively, then $F^0_j\cap\{k+1,\dots,\ell-1\} = F'_j \cap\{k+1,\dots,\ell-1\}$; thus, if the $m$-th highest live square in $D'_j\setminus F'_j$ is the live square that is immediately above $(i,j)$, then the $m$-th highest live square in $D^0_j\setminus F^0_j$ is below row $i$.

\item Suppose that $(i,j)\in F^0$ and that there are $a$ live squares in $D^0_j$ above row $i$ and $b$ live squares in $D^0_j$ below row $i$. If $(i,j)\in F^m\setminus F^{m+1}$ for some $m$, then $i+1 = k_{j,m+1}$ and there are $a$ and $b$ live squares in $D^m_j$ above and below row $i$ respectively. It follows that there are $a$ and $b$ live squares in $D'_j$ above and below row $i$ respectively.

\item Let $(i,j),(i,k)\in F'$ be dead squares in the same row. Suppose that the $m_j$-th highest square in $D'_j\setminus F'_j$ is the live square immediately above $(i,j)$ and that the $m_k$-th highest square in $D'_k \setminus F'_k$ is the live square immediately above $(i,k)$. Then the $m_j$-th highest square in $D_j$ and the $m_k$-th highest square in $D_k$ are not linked; thus Lemma~\ref{lem:propagating-linkedness} guarantees that the $m_j$-th highest square in $D^0_j$ and the $m_k$-th highest square in $D^0_k$ are not linked.
\end{enumerate}
Let $m$ be minimal so that $m\not\in D^0_j$ and $m+1\in D^0_j\setminus F^0_j$; if no such $m$ exists, then set $m\colonequals n+1$. It follows that if $i < m$ and $i\in D^0_j\setminus F^0_j$, then there is $m' < i$ with $m'\in F^0_j$ and $m'+1, \dots, i\in D^0_j\setminus F^0_j$. Condition (1) implies that $m'\in F'_j$, and conditions (2) and (4) together imply $(D^0_j\setminus F^0_j)\cap\{1,\dots,m-1\} = (D'_j\setminus F'_j)\cap\{1,\dots,m-1\}$. Then condition (3) implies that $(F'\setminus F)\cap\{1,\dots,m-1\} = \emptyset$, so it suffices to show that $\mathcal D^0_j$ and $\mathcal D'_j$ agree below row $m+1$.

Either $m\not\in D^m_j$, $m+1\in D^m_j$, and $s(\mathcal D^m_j, \mathcal D'_j) \leq m+1$ or $m, m+1\in D^m_j$ and there exists $\ell < m$ with $k_{j,\ell} = m+1$ so that $s(\mathcal D^\ell_j,\mathcal D'_j) \leq m$.  Furthermore, in both cases, the canonical bubbling sequence then leaves rows $\{m+1, \dots, n\}$ invariant. It follows that $s(\mathcal D^0_j,\mathcal D'_j) \leq m+1$, and we conclude that $D^0_j = D'_j$ and $F^0_j = F'_j$.
\end{proof}

\begin{lem}
\label{lem:BD-characterization}
Let $\mathcal D = (D, r, F)$ be a bubbling diagram, and let $F'\subseteq D'$ be diagrams. Then there exists $r'\colon D'\to\ZZ_{\geq 0}$, necessarily unique, so that $(D',r',F')\in\BD(\mathcal D)$ if and only if $(D', F')$ is $\mathcal D$-admissible.
\end{lem}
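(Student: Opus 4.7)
I would prove the three components separately: necessity ($\mathcal D' \in \BD(\mathcal D) \Rightarrow (D',F')$ is $\mathcal D$-admissible), sufficiency ($(D',F')$ admissible $\Rightarrow$ exists $r'$ with $(D',r',F') \in \BD(\mathcal D)$), and uniqueness of $r'$. Sufficiency follows immediately from Lemma~\ref{lem:canonical-bubbling-sequence}: under the admissibility hypothesis, the canonical bubbling sequence consists of legal (K-)bubbling moves and terminates at a bubbling diagram $\mathcal D^0 = (D', r^0, F')$.

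For necessity, I would induct on the number of (K-)bubbling moves used to produce $\mathcal D'$ from $\mathcal D$. The base case $\mathcal D' = \mathcal D$ is immediate from the defining axioms of a bubbling diagram. For the inductive step, suppose $\mathcal D'$ is obtained from $\mathcal D'' = (D'',r'',F'')$ with $(D'',F'')$ already $\mathcal D$-admissible, by a move at $(i,j)$. Such a move requires $(i-1,j)$ to be empty, so it never crosses any pre-existing dead square; it only alters column $j$ by swapping the live square at $(i,j)$ for one at $(i-1,j)$, and, in the K-bubbling case, leaving behind a new dead square at $(i,j)$. Conditions (1), (2), and (4) follow directly from this local picture together with the induction hypothesis. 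For condition (3) applied to a newly-created dead square $(i,j) \in F' \setminus F''$, the live square immediately above is $(i-1,j)$, which sits as the $(a+1)$-th highest live square in $D'_j$, where $a$ counts the live squares in $D''_j$ strictly above row $i$. The inductive hypothesis $D'' \setminus F'' \leq D \setminus F$ then forces the $(a+1)$-th highest live square of $D_j \setminus F_j$ to lie weakly below row $i$.

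Condition (5) is the key subtlety, and is where I expect the main obstacle, as it is the only condition genuinely coupling distinct columns. Suppose $(i,j)$ (new) and $(i,k)$ (old) are dead squares of $F'$ in the same row. If the corresponding $m_j$-th and $m_k$-th highest live squares in $D$ were linked, Lemma~\ref{lem:propagating-linkedness} would transfer the linkage down to $D''$. Combining this linkage with the dead-square rank axiom applied to $(i,k)$ in $\mathcal D''$, one would deduce $r''(i,j) = r''(i,k)$, directly contradicting the K-bubbling legality condition at $(i,j)$. Thus no such linkage exists in $D$, establishing condition (5).

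Finally, for uniqueness of $r'$, observe that each (K-)bubbling move preserves the invariant $r(i,j) - i$ on the moving live square. Hence the rank of the $m$-th highest live square of $D'_j$, located at row $i$, must equal $r(i_m,j) - i_m + i$, where $i_m$ is the row of the $m$-th highest live square of $D_j$. Ranks of old dead squares in $F$ are unchanged by any move, and the ranks of new dead squares in $F' \setminus F$ are forced by the first bubbling-diagram axiom in terms of the rank of the live square immediately above. This fully specifies $r'$ and completes the plan.
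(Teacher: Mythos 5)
Your proposal is correct and follows essentially the same outline as the paper's proof: the backward implication (sufficiency) comes directly from the canonical bubbling sequence and Lemma~\ref{lem:canonical-bubbling-sequence}, exactly as you say, while the forward implication (necessity) is proved by induction on the number of moves, checking that $\mathcal D$-admissibility is preserved by each bubbling or K-bubbling move. The paper dismisses the forward direction as ``a straightforward check,'' so your unpacking of conditions (1)--(5) is content the paper leaves implicit, and your treatment of condition (5) --- using the equality $r'(i',j)-r(i,j)=i'-i$ to propagate linkage forward, together with the dead-square rank axiom to derive $r''(i,j)=r''(i,k)$ and contradict K-bubbling legality --- is the right argument. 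One small technical note: Lemma~\ref{lem:propagating-linkedness} as stated transfers linkage \emph{backward} (from $\mathcal D'\in\BD(\mathcal D)$ to $\mathcal D$), whereas you invoke the converse; this is harmless because the proof of that lemma is a chain of equalities and works in both directions, but it would be worth flagging. You should also note that conditions (3) and (5) must be re-verified for \emph{pre-existing} dead squares (those in $F''\setminus F$), not just the newly created one; this reduces to observing that a move at $(i,j)$ does not change the ordinal position within its column of the live square immediately above any pre-existing dead square, after which the inductive hypothesis applies. Finally, you prove uniqueness of $r'$, which the paper asserts in the statement but does not argue explicitly; your invariant $r(i,j)-i$ on moving live squares plus the rank axiom for dead squares is the right justification.
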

\begin{proof}[Proof of Lemma~\ref{lem:BD-characterization}]
A straightforward check confirms that if $\mathcal D'\in\BD(\mathcal D)$ is $\mathcal D$-admissible, then any bubbling diagram $\mathcal D''$ obtained from $\mathcal D'$ via a bubbling or K-bubbling move is $\mathcal D$-admissible. Thus, the forward implication follows.

Conversely, take any $\mathcal D$-admissible $F'\subseteq D'$. The canonical bubbling sequence of $(D',F')$ with respect to $\mathcal D$ gives a bubbling diagram $\mathcal D^0 = (D^0, r^0, F^0) \in\BD(\mathcal D)$, and Lemma~\ref{lem:canonical-bubbling-sequence} guarantees that $D^0 = D'$ and $F^0 = F'$.
\end{proof}
\newtheorem*{thm:main1}{Theorem~\ref{thm:main1}}
\begin{thm:main1}
Let $w \in S_n$ be a vexillary permutation. Then $\supp(\mathfrak G_w) = \{\wt(\mathcal D)\colon \mathcal D \in \BD(w)\}$.
\end{thm:main1}
\begin{proof}[Proof of Theorem~\ref{thm:main1}]
We first show that $\supp(\mathfrak G_w)\subseteq\{\wt(\mathcal D)\colon \mathcal D\in\BD(w)\}$. By Corollary~\ref{cor:supp-Gw-mbpd}, it suffices to show that for every marked bumpless pipe dream $(P,S) \in \MBPD(w)$, there exists a bubbling diagram $\mathcal D\in\BD(w)$ so that $\wt(\mathcal D) = \wt(P,S)$.  We will construct such a bubblinng diagram $\mathcal D$ as follows; see Example~\ref{example:Dm-from-Pm} for an example.

Fix $(P,S) \in \MBPD(w)$. Let $(P_m, S_m) \in \MBPD(w)$ denote the MBPD whose $w(m+1), w(m+2), \dots, w(n)$-th marked pipes agree with those of $(P,S)$ and whose $w(1),$ $w(2),$ $\dots,$ $w(m)$-th marked pipes agree with those of the Rothe BPD $P(w)$. Let $B_m$ denote the set of blank tiles of $P_m$ which are not southeast of any of the pipes $w(m+1), \dots, w(n)$.

We will use induction to construct diagrams $\mathcal D^m = (D^m, r^m, F^m) \in\BD(w)$ and bijections $f_m\colon[n]\to[n]$ so that:
\begin{itemize}
\item The diagram $\mathcal D^m$ agrees with $\mathcal D(w)$ above row $m$,

\item $\wt(\mathcal D^m) = \wt(P_m, S_m)$,

\item $(i,j)\in B_m$ if and only if $(i, f_m(j))\in D^m\setminus F^m$ for every $1 \leq i \leq \max\{k\colon (k,j)\in B_m\}$, and

\item The rank of any square $(i,j) \in B_m$ is equal to $r^m(i,f_m(j))$.
\end{itemize}

Since $(P_n, S_n)$ is the Rothe BPD, we set $\mathcal D^n\colonequals \mathcal D(w)$ and $f_n \colonequals \mathrm{id}$; the three items above hold because $B_n = D(w)$ (as subsets of $[n]\times[n]$) and $S_n = F_n = \emptyset$.

If $(P_{m-1}, S_{m-1}) = (P_m, S_m)$, then we define $\mathcal D^{m-1}\colonequals\mathcal D^m$ and $f_{m-1}\colonequals f_m$.

Now suppose $(P_{m-1},S_{m-1})\neq(P_m, S_m)$. Assume we are given $\mathcal D^m$ and $f_m$ satisfying the items above. Let $Q_m$ denote the set of blank tiles in $B_m$ which are displaced upon replacing the $w(m)$-th pipe in $P_m$ with the $w(m)$-th (marked) pipe of $P$ to obtain $(P_{m-1}, S_{m-1})$.

Any square $(i,j) \in Q_m$ that is northernmost in its column satisfies $i = m + 1$; it follows by construction of $f_m$ that $(m+1,f_m(j)) \in D_m\setminus F_m$ and that $(m, f_m(j))\not \in D_m$. Furthermore, any square in $Q_m$ has the same rank, $Q_m\cap S_{m-1}$ has at most one square in each row, and any square in $Q_m\cap S_{m-1}$ is southernmost in its column. Thus, we may apply bubbling moves to $\mathcal D_m$ at the squares $\{(i, f_m(j))\colon (i,j) \in Q_m \setminus S_{m-1}\}$ followed by K-bubbling moves at the squares $\{(i,f_m(j))\colon (i,j) \in Q_m\cap S_{m-1}\}$ to produce a bubbling diagram $\mathcal D_{m-1}$.  This bubbling diagram agrees with $\mathcal D(w)$ above row $m-1$ and satisfies $\wt(\mathcal D_{m-1}) = \wt(P_{m-1}, S_{m-1})$. 

It remains to define the bijection $f_{m-1}$.  Let $j_1 < \dots < j_k$ denote the columns which have squares in $Q_m$, and set $j_0\colonequals j_1-1$. Let $\sigma_m\colon\{j_0, \dots,j_k\}\to\{j_0, \dots, j_k\}$ be the map $\sigma_m(j_\ell) = j_{\ell+1}$ (with $j_{k+1}\colonequals j_0$), and let $f_{m-1} \colonequals f_m\circ\sigma_m$. Since $(m,j_0), \dots, (m,j_k) \not \in D(w)$, Lemma~\ref{lem:vexillary-diagram-column-copies} guarantees that the $j_0, \dots, j_k$-th columns of $D(w)$ all agree above row $m$. Combined with the fact that $(i,j) \in B_m$ if and only if $(i,f_m(j)) \in D^m\setminus F^m$ for all $1 \leq i \leq \max\{k\colon(k,j)\in B_m\}$, it follows that $(i,j) \in B_{m-1}$ if and only if $(i, f_{m-1}(j))\in D^{m-1}\setminus F^{m-1}$ for every $1 \leq i \leq \max\{k\colon (k,j) \in B_{m-1}\}$ and furthermore, that the rank of any square $(i,j) \in B_{m-1}$ is equal to $r^{m-1}(i,f_{m-1}(j))$.\\

We now show that $\supp(\mathfrak G_w)\supseteq\{\wt(\mathcal D)\colon \mathcal D\in\BD(w)\}$. By Corollary~\ref{cor:supp-Gw-mbpd}, it suffices to show that for every diagram $\mathcal D\in\BD(w)$, there exists a marked bumpless pipe dream $(P,S)\in\MBPD(w)$ such that $\wt(\mathcal D) = \wt(P,S)$. We accomplish this using the following construction, see Example~\ref{example:Pm-from-Dm} for an example.

Fix $\mathcal D = (D, r, F) \in \BD(w)$. Let $\mathcal D^m = (D^m, r^m, F^m)$ denote the canonical bubbling sequence. We will construct MBPDs $(P_m, S_m)$ and bijections $g_m\colon[n]\to[n]$ so that:
\begin{itemize}
\item The $w(1), \dots, w(m)$-th pipes of $P_m$ have no up-elbow tiles;

\item $\wt(\mathcal D^m) = \wt(P_m, S_m)$;

\item $(i,j) \in B_m$ if and only if $(i, g_m(j)) \in D^m\setminus F^m$ for every $1\leq i \leq \max\{k\colon(k,j)\in B_m\}$;

\item The rank of any square $(i,j) \in B_m$ is equal to $r_m(i,f_m(j))$.
\end{itemize}
Since $\mathcal D^n$ is the Rothe bubbling diagram, we set $(P_n, S_n)$ to be the Rothe BPD and $g_n \colonequals \mathrm{id}$. If $\mathcal D^{m-1} = \mathcal D^m$, then we define $(P_{m-1}, S_{m-1}) \colonequals (P_m, S_m)$ and $f_{m-1}\colonequals f_m$.

Now suppose $\mathcal D^{m-1}\neq\mathcal D^m$. Assume we are given $(P_m, S_m)$ and $f_m$ satisfying the items above. Let $j_1, \dots, j_\ell$ be the columns 
which are bubbled when constructing $\mathcal D^{m-1}$, and let $k_{j_i} = s(\mathcal D^m_{j_i},\mathcal D'_{j_i})$, indexed so that $k_{j_1} \geq \dots \geq k_{j_\ell}$ and so that if $(k_{j_i},j_i) \in F$, then $k_{j_i} > k_{j_{i+1}}$.

By assumption on $g_m$ and by Lemma~\ref{lem:canonical-bubbling-sequence}, the squares $(x, g_m^{-1}(j_i))$ are in $B_m$ for $m+1 \leq x \leq k_{j_i}-1$ while the squares $(m,g_m^{-1}(j_i))$ are not in $B_m$. It follows that the squares $(i,g_m^{-1}(j_i))$ are southeast of pipe $w(m)$.

Let
\[
R_m = \{(x,w^{-1}(m)+y)\colon m+1\leq x \leq k_{j_y}-1, 1 \leq y \leq \ell\}.
\]
We define $(P_{m-1}, S_{m-1})$ to be the BPD obtained from $P_m$ by replacing pipe $w(m)$ with the pipe that traces the southeasternmost squares of $R_m$ and marking the up-elbow tiles at $(k_{j_i}-1, w^{-1}(m))$ whenever $(k_{j_i}-1, j_i) \in F$. The $w(1), \dots, w(m-1)$-th pipes of $P_{m-1}$ have no up-elbow tiles, and this BPD satisfies $\wt(\mathcal D_{m-1}) = \wt(P_{m-1},S_{m-1})$. 

It remains to define the bijection $g_{m-1}$.  Let $h_m$ be the maximal integer such that $(m+1,h_m)\in B_m$. Let $\psi_m\colon\{w^{-1}(m), w^{-1}(m)+1, w^{-1}(m)+2, \dots, h_m\}\to\{w^{-1}(m), w^{-1}(m)+1, w^{-1}(m)+2, \dots, h_m\}$ denote a fixed bijection which sends $w^{-1}(m)+i-1$ to $g_m^{-1}(j_i)$. Then we define $g_{m-1}\colonequals g_m\circ\psi_m$. Since $(m,w^{-1}(m)+1), \dots, (m, h_m) \not\in D(w)$, Lemma~\ref{lem:vexillary-diagram-column-copies} guarantees that the $w^{-1}(m)+1, \dots, h_m$-th rows of $D(w)$ all agree above row $m$. Combined with the fact that $(i,j)\in B_m$ if and only if $(i,g_m(j))\in D^m\setminus F^m$ for all $1\leq i\leq\max\{k\colon(k,j)\in B_m\}$, it follows that $(i,j)\in B_{m-1}$ if and only if $(i, g_{m-1}(j))\in D^{m-1}\setminus F^{m-1}$ for every $1\leq i\leq\max\{k\colon(k,j)\in B_{m-1}\}$ and furthermore, that the rank of any square $(i,j)\in B_{m-1}$ is equal to $r^{m-1}(i,g_{m-1}(j))$.
\end{proof}
\begin{example}
\label{example:Dm-from-Pm}
Let $(P,S)\in \MBPD(w)$ be as in Figure~\ref{fig:PS-Dm-from-Pm}.
\begin{figure}[ht]
\includegraphics[scale=0.8]{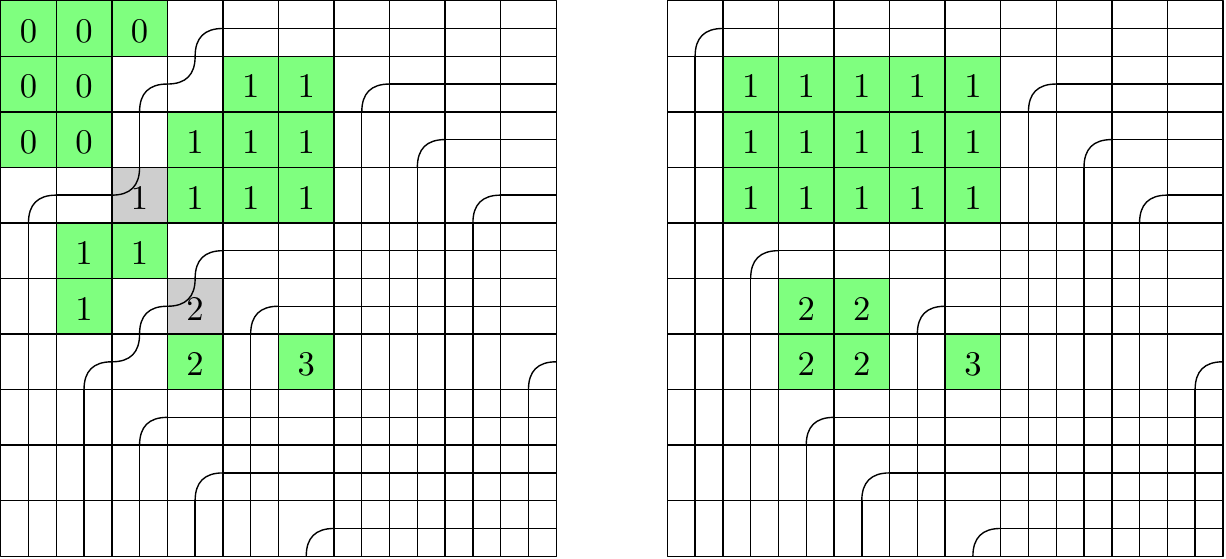}
\caption{Left: A marked pipe dream $(P,S) \in \MBPD(w)$ for $w=178925(10)346$ with blank tiles shaded green and marked up-elbows shaded grey; each blank and marked tile is labeled by its rank. Right: The Rothe MBPD $(P(w), \emptyset)$ with blank and marked tiles labeled by their ranks.}
\label{fig:PS-Dm-from-Pm}
\end{figure}

The MBPDs $(P_m,S_m)$ are equal to $(P(w),\emptyset)$ for $m\geq 5$, so the bubbling diagrams $\mathcal D^m$ are equal to $\mathcal D(w)$ for $m\geq 5$. Furthermore, $f_m$ is the identity for $m\geq 5$. The set 
\[
Q_5 = \{(6,3),(7,3), (6,4)\}
\]
is, however, nonempty, so $(P_4, S_4) \neq (P_5, S_5)$. The set $Q_5$ contains squares in columns $3$ and $4$, and the map $f_4 = \sigma_5$ cyclically permutes the set $(2,3,4)$.

Then $(P_m, S_m) = (P_4, S_4)$ for $1\leq m\leq 4$, so $\mathcal D^m = \mathcal D^4$ for $1\leq m\leq 4$; however, 
\[
Q_1 = \{(2,2), (3,2), (4,2), (2,3), (3,3), (4,3), (2,4)\}
\]
is nonempty, so $(P_0, S_0)\neq (P_1, S_1)$. The set $Q_1$ contains squares in columns $2$, $3$, and $4$, and the map $\sigma_1$ cyclically permutes the set $(1,2,3,4)$. Thus, the map $f_0 = f_4\circ \sigma_1$ cyclically permutes the set $(1,3,2,4)$. See Figure~\ref{fig:Qmfm}.

\begin{figure}[ht]
\includegraphics[scale=0.7]{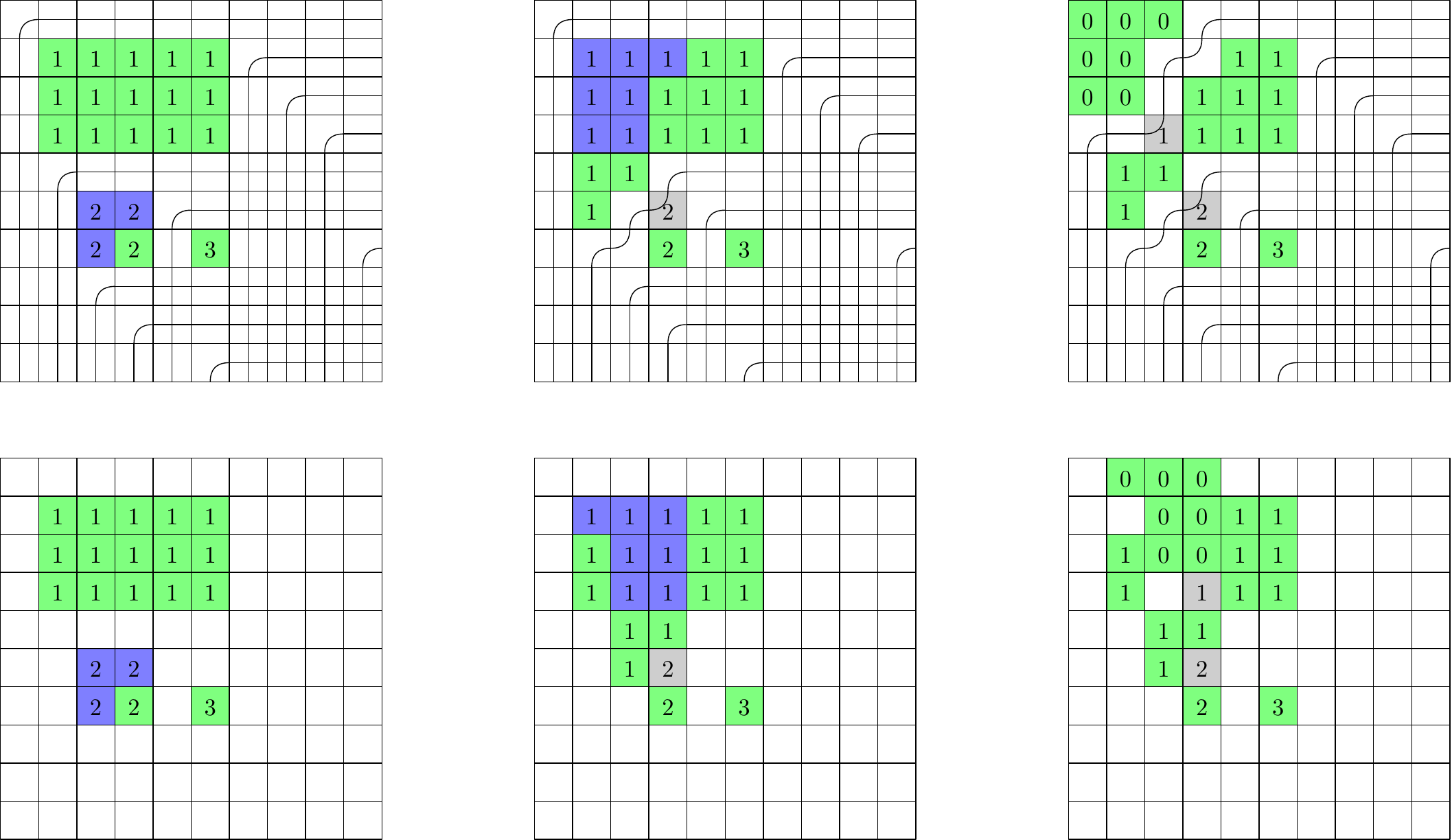}
\caption{Left column: The pipe dream $(P_5,S_5) = (P(w),\emptyset)$ for $P$ as in Figure~\ref{fig:PS-Dm-from-Pm} along with the diagram $\mathcal D^5 = \mathcal D(w)$. The squares in $Q_5$ and its image under $f_5$ are shaded blue. Middle column: The pipe dream $(P_1,S_1)$ along with $\mathcal D^1$. The squares in $Q_1$ and its image under $f_1$ are shaded blue. Right column: The pipe dream $(P_0,S_0) = P$ along with the diagram $\mathcal D^0 = \mathcal D$.}
\label{fig:Qmfm}\end{figure}
\end{example}
\begin{example}
\label{example:Pm-from-Dm}
Let $\mathcal D\in\BD(w)$ be as in Figure~\ref{fig:D-Pm-from-Dm}.
\begin{figure}[ht]
\includegraphics[scale=0.8]{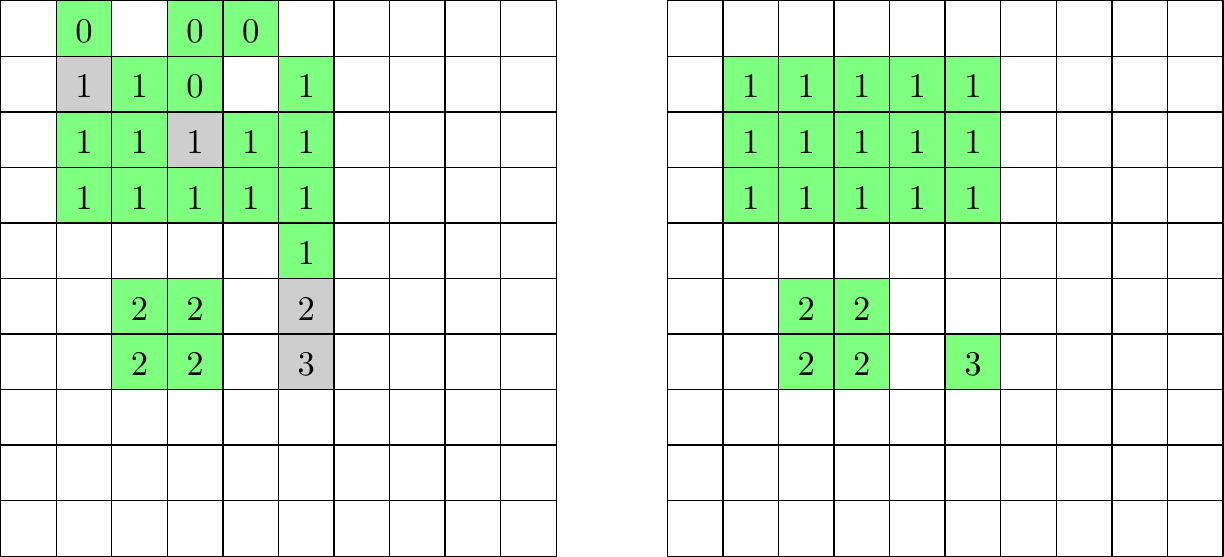}
\caption{Left: A bubbling diagram $\mathcal D \in \BD(w)$ for $w=178925(10)346$. Right: The Rothe bubbling diagram $\mathcal D(w)$.}
\label{fig:D-Pm-from-Dm}
\end{figure}
The bubbling diagrams $\mathcal D^m$ are equal to $\mathcal D(w)$ for $m\geq 6$, so the MBPDs $(P_m,S_m)$ are equal to $(P(w),\emptyset)$ for $m\geq 6$. Furthermore, $g_m$ is the identity for $m\geq 6$. Then, $\mathcal D^5$ is obtained from $\mathcal D^6$ by applying a K-bubbling move to $(7,6)\in D^6\setminus F^6$. We have
\[
R_6 = \{(7,6)\},
\]
so $(P_5, S_5) \neq (P_6, S_6)$. The map $\psi_6\colon\{5,6\}\to\{5,6\}$ is a fixed bijection which sends $w^{-1}(6)+1-1 = 5$ to $g_6^{-1}(6) = 6$, so $\psi_6$ will cyclically permute $(5,6)$. Thus, $g_5 = \psi_6$ cyclically permutes $(5,6)$.

Next, $\mathcal D^4$ is obtained from $\mathcal D^5$ by applying a K-bubbling move to $(6,6)\in D^5\setminus F^5$. We have
\[
R_5 = \{(6,3)\},
\]
so $(P_4, S_4)\neq (P_5, S_5)$. The map $\psi_5\colon\{2,3,4,5\}\to\{2,3,4,5\}$ is a fixed bijection which sends $w^{-1}(5)+1-1 = 2$ to $g_5^{-1}(6) = 5$; suppose that $\psi_5$ cyclically permutes $(2,5)$. Then, $g_4 = g_5\circ\psi_5$ cyclically permutes $(2,6,5)$.

Then, $\mathcal D^m =\mathcal D^4$ and $g_m = g_4$ for $1\leq m\leq 4$. The bubbling diagram $\mathcal D^0 = \mathcal D$ is obtained from $\mathcal D^1$ by applying bubbling moves to $\{(2,4), (2,5)\} \in D^1\setminus F^1$ and then K-bubbling moves to $\{(2,1), (3,4)\} \in D^1\setminus F^1$. We have
\[
R_1 = \{(2,2),(3,2),(2,3),(2,4)\}.
\]
The map $\psi_1\colon \{1,2,3,4,5,6\}\to\{1,2,3,4,5,6\}$ is a fixed bijection which sends $w^{-1}(1) + 1-1 = 1$ to $g_1^{-1}(4) = 4$, $w^{-1}(1) + 2-1 = 2$ to $g_1^{-1}(5) = 6$, and $w^{-1}(1) + 3-1 = 3$ to $g_1^{-1}(2) = 5$; suppose that $\psi_1$ cyclically permutes $(1,4)$ and $(2,6)$ and $(3,5)$. Then, $g_0 = g_1\circ\psi_1$ cyclically permutes $(1,4)$ and $(2,5,3)$.

See Figure~\ref{fig:Rmgm}.

\begin{figure}[htp]
\includegraphics[scale=0.7]{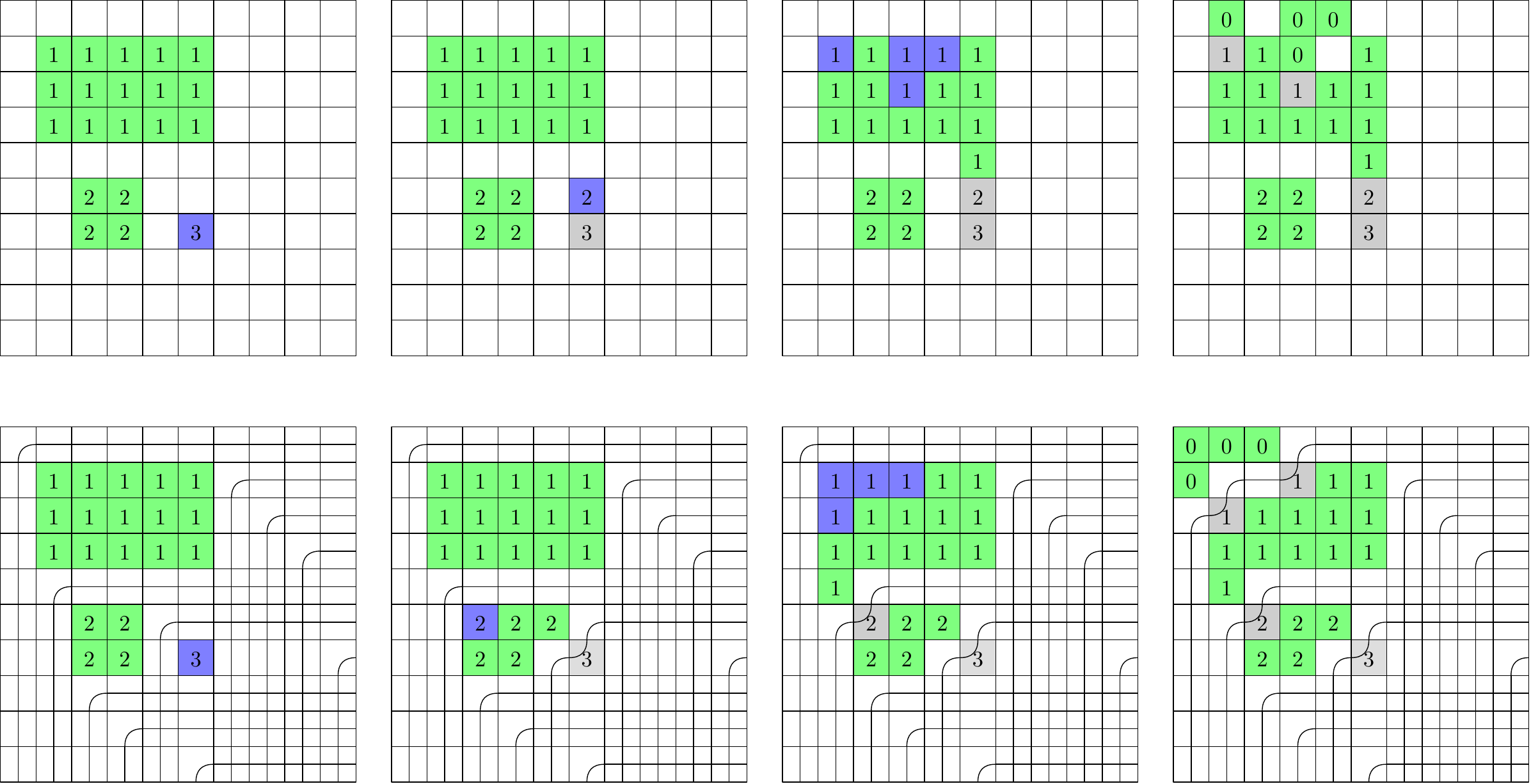}
\caption{Left column: The diagram $\mathcal D^6 = \mathcal D(w)$ for $\mathcal D$ as in Figure~\ref{fig:D-Pm-from-Dm} along with the pipe dream $(P_6, S_6) = (P(w), \emptyset)$; $R_6$ and its image under $g_6$ are shaded blue. Second column: The diagram $\mathcal D^5$ along with $(P_5, S_5)$; $R_5$ and its image under $g_5$ are shaded blue. Third column: The diagram $\mathcal D^1$ along with $(P_1, S_1)$; $R_1$ and its image under $g_1$ are shaded blue. Right column: The diagram $\mathcal D^0 = \mathcal D$ along with $(P_0, S_0) = (P,S)$.}
\label{fig:Rmgm}
\end{figure}
\end{example}

\section{Supports of top degree components of Grothendieck polynomials}
\label{sec:supports-of-top}
We prove Theorem~\ref{thm:main2} by showing that bubbling diagrams can be systematically padded to obtain a top-degree diagram which is necessarily in $\supp(\chi_{D^\top(w)})$ (Theorem~\ref{thm:top-diagrams}), and we show that divisibility relations among monomials in $\mathfrak G_w$ can be realized by inclusion relations among bubbling diagrams in a strong sense (Theorem~\ref{thm:remove-dead-squares}).
\begin{defn}
Let $w\in S_n$ be a vexillary permutation. We will construct an ordered set $A(w)$ of \textbf{distinguished live squares} using the following procedure.
\begin{enumerate}
\item Endow the squares in $D(w)$ with the total ordering given by $(i,j) \prec (i',j')$ if:
\begin{enumerate}
\item $i > i'$, or

\item $i = i'$ and $(i,j)$ has fewer squares below it than $(i',j')$, or

\item $i = i'$, $(i,j)$ has the same number of squares below it as does $(i',j')$, and $j < j'$.
\end{enumerate}
\item Add the first square in this ordering to $A(w)$.

\item Each subsequent square in the ordering will be appended to $A(w)$ if and only if $A(w)$ does not already contain a square in the same column and $A(w)$ does not already contain a square in the same linking class.
\end{enumerate}
\end{defn}

\begin{example}
The diagram $D(w)$ for $w = 178925(10)346$ with squares labeled by their positions in the $\prec$ order is shown in Figure~\ref{fig:A(w)178925(10)346}. The squares in $A(w)$ are colored gold.
\begin{figure}[ht]
\includegraphics{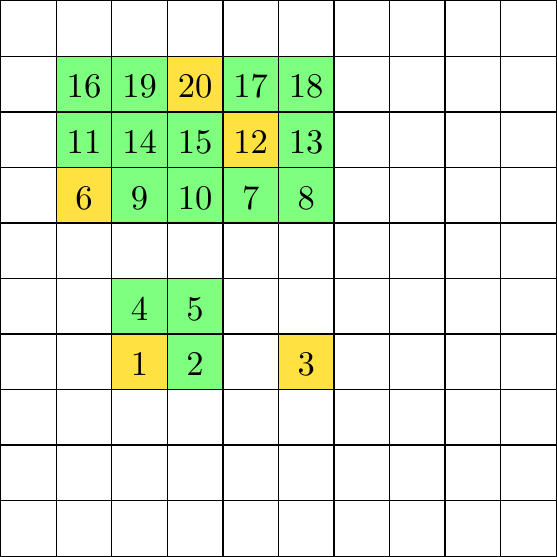}
\caption{The $\prec$ ordering on $D(w)$, along with the set $A(w)$, for $w = 178925(10)346$.}
\label{fig:A(w)178925(10)346}
\end{figure}
\end{example}
\begin{defn}
Let $w\in S_n$ be a vexillary permutation, and let $\mathcal D\in\BD(w)$. Suppose that the $k$-th square in $A(w)$ is the $m_k$-th highest square in column $j_k$. Define the \textbf{distinguished live squares} $A(\mathcal D)$ of $\mathcal D$ to be the ordered set whose $k$-th element is the $m_k$-th highest live square in $D_{j_k}$. 
\end{defn}

\begin{lem}
\label{lem:dead-squares-linked-below-Aw}
Let $w\in S_n$ be a vexillary permutation, and let $\mathcal D = (D,r,F)\in\BD(w)$. Suppose that a dead square $(i,j)\in F$ is linked to the $k$-th highest live square in $D_j$. Suppose that the $\ell$-th square in $A(\mathcal D)$ is the $k'$-th highest live square in $D_j$ for $k' < k$. Then $(i,j)$ is linked to the $\ell'$-th square in $A(\mathcal D)$ for some $\ell' < \ell$.
\end{lem}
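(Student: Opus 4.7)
The plan is to recast linking via the \emph{invariant} $\iota(i,j) := i - r(i,j)$: two squares of $D$ are linked exactly when they share the same invariant. A direct check shows that regular and K-bubbling moves each preserve the invariant of every live square along its trajectory, and that the dead square $(i,j)$ produced by a K-bubbling move inherits the invariant of the live square it replaced (both $(i,j)$, with unchanged rank, and the new live square $(i-1,j)$, with rank $r(i,j)-1$, have invariant $i - r(i,j)$). In particular---this is the content of the equalities $r'(i_j',j) - r(i_j,j) = i_j' - i_j$ appearing in the proof of Lemma~\ref{lem:propagating-linkedness}---the $a$-th highest live square in $D_j$ of $\mathcal D$ has the same invariant as the $a$-th highest square in $D(w)_j$. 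Consequently, the positional correspondence $A(w) \leftrightarrow A(\mathcal D)$ matches invariants entry by entry.

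Applying this to our hypotheses: the dead square $(i,j) \in F$ has some invariant $\iota$ which, by the linking hypothesis, coincides with the invariant of the $k$-th highest live square in $D_j$, and therefore with the invariant of the $k$-th highest square in $D(w)_j$. The $\ell$-th entry of $A(w)$ is the $k'$-th highest square of $D(w)_j$ with $k' < k$; so the $k$-th highest square of $D(w)_j$ lies strictly below the $k'$-th highest, and is therefore $\prec$-smaller. In the greedy construction of $A(w)$, the $k$-th highest of $D(w)_j$ is thus processed strictly before the $k'$-th highest.

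Since $A(w)$ contains exactly one square from column $j$---namely the $k'$-th highest---no column-$j$ square was yet in $A(w)$ at the moment the $k$-th highest of $D(w)_j$ was processed. The fact that it was nonetheless skipped forces the linking-class obstruction: $A(w)$ already contained a square of invariant $\iota$, sitting at some position $\ell'' < \ell$. By the invariant-preserving correspondence, the $\ell''$-th entry of $A(\mathcal D)$ also has invariant $\iota$, so it is linked to $(i,j)$. Setting $\ell' := \ell''$ yields the conclusion. The main work lies in establishing the invariant-preserving correspondence between $A(w)$ and $A(\mathcal D)$; after that, the argument is bookkeeping with the $\prec$-order and the greedy rule defining $A(w)$.
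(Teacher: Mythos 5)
Your proof is correct and takes essentially the same approach as the paper: you transfer the question to the Rothe diagram using the invariant $i - r(i,j)$ (the same quantity underlying the paper's Lemma~\ref{lem:propagating-linkedness}), then invoke the greedy rule for $A(w)$ — the $k$-th highest square of $D(w)_j$ is $\prec$-smaller than the $k'$-th, cannot be in $A(w)$ by the one-per-column rule, and hence was skipped for the linking-class reason, producing an $\ell' < \ell$ — and finally push the linking relation back to $\mathcal D$. The only cosmetic difference is that you make explicit the bidirectional invariant-preservation and the exclusion of the column obstruction, where the paper states the same chain of deductions more tersely.
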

\begin{proof}
As the $k'$-th highest live square in $D_j$ is in $A(\mathcal D)$, the $k'$-th highest square in $D(w)_j$ is in $A(w)$. Similarly, the $k$-th highest square in $D(w)_j$ is not in $A(w)$. As the $k$-th highest square in $D(w)_j$ precedes the $k'$-th highest square in the $\prec$ order, it follows that the $k$-th highest square in $D(w)_j$ is linked to the $\ell'$-th square in $A(w)$ for $\ell' < \ell$. Thus, the $k$-th highest live square in $D_j$ is linked to the $\ell'$-th square in $A(\mathcal D)$.
\end{proof}
\begin{defn}
Let $w \in S_n$ be a vexillary permutation. Construct the bubbling diagram 
\[
\mathcal D^\top(w) \colonequals (D^\top(w), r_{D(w)}^\top, F^\top(w))\in\BD(w)
\]
from $\mathcal D(w)$ by repeatedly applying bubbling moves to every square that is above a distinguished live square until it is no longer possible to do so and then repeatedly applying K-bubbling moves to every distinguished live square until it is no longer possible to do so.

For example, the bubbling diagram $\mathcal D^\top(w)$ for $w = 178925(10)346$ is shown in Figure~\ref{fig:dtop178925(10)346}.
\begin{figure}[ht]
\includegraphics{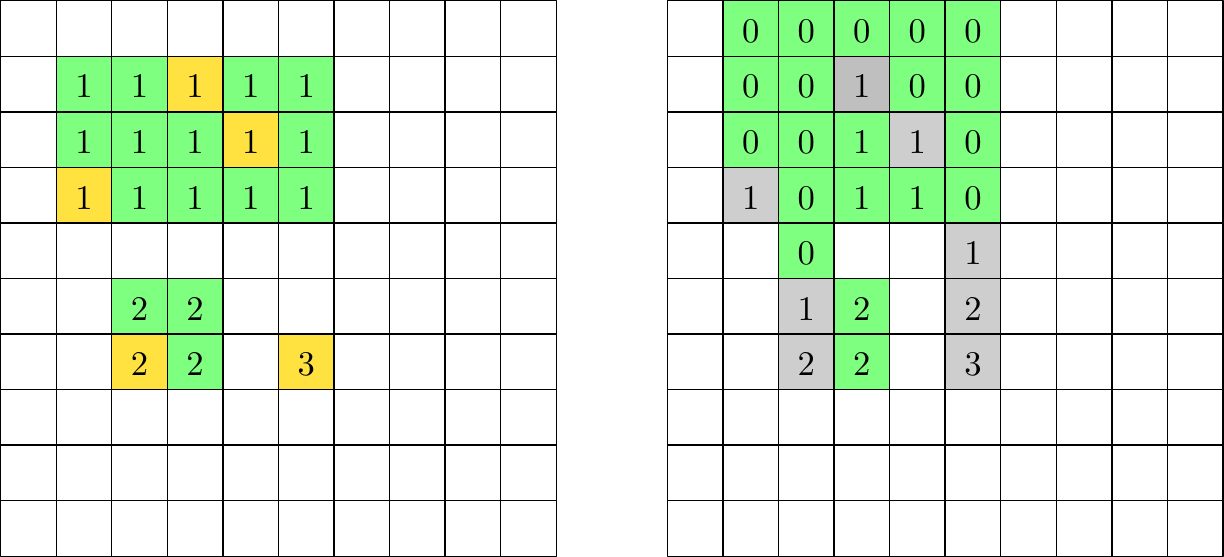}
\caption{Left: The squares in $D(w)$ are labeled by their rank, and the set $A(w)$ is colored gold. Right: The bubbling diagram $\mathcal D^\top(w)$.}
\label{fig:dtop178925(10)346}
\end{figure}
\end{defn}

\begin{thm}
\label{thm:top-diagrams}
Let $w \in S_n$ be a vexillary permutation. For any $\mathcal D = (D, r, F) \in \BD(w)$, there is $\mathcal D' = (D',r',F')\in\BD(w)$ with $F' = F^\top(w)$ and $x^{\wt(D)} \mid x^{\wt(D')}$.
\end{thm}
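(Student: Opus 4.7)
The plan is to apply Lemma~\ref{lem:BD-characterization}: it suffices to exhibit a pair $(D', F^\top(w))$ that is $\mathcal D(w)$-admissible and satisfies $\wt(D) \leq \wt(D')$ componentwise, since this automatically yields the needed $\mathcal D' \in \BD(w)$. Because $\mathcal D(w)$ has no dead squares, admissibility conditions (1) and (4) are vacuous, leaving conditions (2), (3), (5), and the weight inequality to verify.

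The construction of $D'$ proceeds column-by-column. In each column $j$ containing a distinguished square $a_j \in A(w)$, I set $F'_j \colonequals F^\top(w)_j = \{s_j+2, \ldots, a_j\}$, where $s_j$ denotes the number of squares in $D(w)_j$ strictly above $a_j$. The live squares of $D'_j$ occupy rows $1, 2, \ldots, s_j+1$ (matching the placement in $\mathcal D^\top(w)_j$) together with positions matching the bottom $\ell_j - s_j - 1$ live squares of $D_j$, shifted up as needed to avoid the dead rows $F'_j$. In columns $j$ with no distinguished square of $A(w)$, I take $F'_j \colonequals \emptyset$ and place the live squares of $D'_j$ at the rows of the live squares of $D_j$.

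Admissibility condition (2) follows from $p_{j,k} \leq r_{j,k}$, which holds for $\mathcal D$ by construction and is preserved by the placement rule. Condition (3) is verified directly from the column-local construction: each dead square $(i,j) \in F^\top(w)_j$ sits immediately below the $(s_j+1)$-th highest live square of $D'_j$ (namely the one at row $s_j+1$), and the $(s_j+1)$-th highest square of $D(w)_j$ is at row $a_j \geq i$. The main obstacle is condition (5): if $(i,j), (i,k) \in F^\top(w)$ are dead squares in the same row, the $m_j$-th and $m_k$-th live squares of $D(w)_j$ and $D(w)_k$ immediately above them must not be linked. This follows from the construction of $A(w)$, which selects at most one square per column and per linking class, combined with Lemma~\ref{lem:dead-squares-linked-below-Aw}: together they promote non-linking from the distinguished squares themselves to all live squares of $D(w)$ that sit immediately above a dead square of $F^\top(w)$.

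The weight inequality $\wt(D) \leq \wt(D')$ requires a row-by-row comparison. In columns with a distinguished square, $D'_j$ covers rows $\{1, \ldots, a_j\}$ completely (via live and dead squares combined) and agrees with $D_j$ below row $a_j$; this dominates the at most $a_j$ squares of $D_j$ in rows $\{1, \ldots, a_j\}$. The delicate part, and where the bulk of the remaining work will lie, is handling columns $j$ that lack a distinguished square but have nonempty $F_j$: the dead squares of $\mathcal D$ there appear to lose weight in the naive $D'$. Resolving this likely requires exploiting Lemma~\ref{lem:vexillary-diagram-column-copies} to show that such dead squares are always ``mirrored'' by coverage in neighboring columns that do contain distinguished squares, using the vexillary hypothesis to control the structure.
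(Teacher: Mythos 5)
Your high-level framing is reasonable: reduce the statement to exhibiting a $\mathcal D(w)$-admissible pair $(D', F^\top(w))$ with $\wt(D) \leq \wt(D')$ and invoke Lemma~\ref{lem:BD-characterization}. But the column-by-column construction you propose has a genuine gap, and it is exactly the one you flagged at the end.

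The critical problem is the weight inequality in columns $j$ that have no distinguished square but have $F_j \neq \emptyset$. Your construction sets $F'_j = \emptyset$ and places only $\#D(w)_j$ live squares there, so it strictly loses $\#F_j$ units of weight distributed over the rows where $\mathcal D$ had dead squares in column $j$. You suggest, without working it out, that Lemma~\ref{lem:vexillary-diagram-column-copies} should let these dead squares be ``mirrored'' by neighboring columns. This is not a detail to be filled in later — it is the entire substance of the theorem. The paper's proof constructs $\mathcal D'$ iteratively, processing distinguished squares $(i_k^*, j_k) \in A(w)$ one at a time: by Lemma~\ref{lem:dead-squares-linked-below-Aw}, any stray dead square in a non-distinguished column is linked to some $(i_k^*, j_k)$, and the paper moves such dead squares horizontally between columns in the same linking class, then swaps column segments to funnel them into column $j_k$, using Lemmas~\ref{lem:vexillary-diagram-semi-containment-1}, \ref{lem:vexillary-diagram-semi-containment-2}, and \ref{lem:vexillary-diagram-semi-containment-3} to certify that each intermediate step stays in $\BD(w)$ and never decreases any row weight. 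A purely column-local construction, as you propose, cannot preserve weight in the rows of orphaned dead squares; cross-column transfers guided by linking classes are unavoidable.

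There are also smaller issues. The description ``shifted up as needed to avoid the dead rows $F'_j$'' is backwards: live squares of $D_j$ landing in the dead band $\{m_k+1,\dots,i_k^*\}$ of a distinguished column cannot shift up (those rows are already occupied by the top live block), so they must be pushed down below $i_k^*$, and one needs to check this stays $\leq D(w)_j$. More seriously, your verifications of admissibility conditions (3) and (5) are asserted rather than argued — condition (5) in particular requires linking information that changes as you push squares around, which is why the paper must prove and repeatedly invoke Lemma~\ref{lem:propagating-linkedness} and the semi-containment lemmas. The proposal identifies the right obstacle but does not overcome it.
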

The proof of Theorem~\ref{thm:top-diagrams} uses the following two lemmas.
\begin{lem}
\label{lem:vexillary-diagram-semi-containment-1}
Let $w\in S_n$ be vexillary. Let $(i_1,j_1), (i_2,j_2)\in D(w)$ be two linked squares, and suppose that $i_1<i_2$. Then $(i',j_2)\in D(w)$ implies $(i',j_1)\in D(w)$ for all $i'>i_2$. In particular, $(i',j_1)$ has at least as many squares below it as $(i',j_2)$ does, and the $k$-th square of $D(w)_{j_2}$ below row $i'$ is in the same row or in a lower row than the $k$-th square of $D(w)_{j_1}$ below row $i'$.
\end{lem}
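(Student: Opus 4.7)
The plan is to prove the main implication $(i', j_2) \in D(w) \Rightarrow (i', j_1) \in D(w)$ (for $i' > i_2$) by contradiction, extracting a 2143 pattern in $w$ from any counterexample; the ``In particular'' conclusions will then follow formally from a set-theoretic inclusion. First, by Lemma~\ref{lem:vexillary-diagram-column-copies}(1), the hypothesis $i_1 < i_2$ forces $j_1 \leq j_2$, and the case $j_1 = j_2$ is trivial, so I assume $j_1 < j_2$ throughout.

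Set $a \colonequals w^{-1}(j_1)$ and $b \colonequals w^{-1}(j_2)$, and suppose for contradiction that $(i', j_2) \in D(w)$ with $i' > i_2$ but $(i', j_1) \notin D(w)$. Since $w(i') > j_2 > j_1$, the failure of $(i', j_1) \in D(w)$ forces $i' > a$; combined with $i_1 < a$ (from $(i_1, j_1) \in D(w)$) and $i' < b$ (from $(i', j_2) \in D(w)$), we obtain the chain $i_1 < a < i' < b$. The key algebraic step is to rewrite $r_{D(w)}(i,j) = (i-1) - |\{k < i : w(k) \geq j\}|$, under which the linking equation $r_{D(w)}(i_2, j_2) - r_{D(w)}(i_1, j_1) = i_2 - i_1$ becomes
\[
X \colonequals |\{k < i_1 : j_1 \leq w(k) < j_2\}| \;=\; |\{i_1 \leq k < i_2 : w(k) \geq j_2\}| \colonequals Y.
\]
I expect this identity to be the main lever of the argument: it lets the cases $Y \geq 1$ and $Y = 0$ fit together cleanly, even though they use different quadruples of indices.

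If $Y \geq 1$ (equivalently $X \geq 1$), pick any $k_1 < i_1$ with $j_1 \leq w(k_1) < j_2$; since $k_1 < i_1 < a$ rules out $w(k_1) = j_1$, we in fact have $j_1 < w(k_1) < j_2 < w(i')$, and the quadruple $k_1 < a < i' < b$ with values $(w(k_1), j_1, w(i'), j_2)$ is a 2143 pattern. If $Y = 0$, every $k \in [i_1, i_2)$ satisfies $w(k) < j_2$; in particular $w(i_1) < j_2$, so combined with $w(i_1) > j_1$ we get $j_1 < w(i_1) < j_2 < w(i')$, and now the quadruple $i_1 < a < i' < b$ with values $(w(i_1), j_1, w(i'), j_2)$ is a 2143 pattern. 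Either case contradicts vexillarity, proving the main implication. For the ``In particular'' clauses, set $S_j \colonequals \{i'' > i' : (i'', j) \in D(w)\}$; applied to any $i'' \in S_2$ (which satisfies $i'' > i' > i_2$), the main implication gives $S_2 \subseteq S_1$, and listing elements in increasing order yields both $|S_1| \geq |S_2|$ and the row-by-row comparison.
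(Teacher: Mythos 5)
Your proof is correct and takes essentially the same route as the paper's: both convert the linking equation into a counting identity, extract a position $k \leq i_1$ with $j_1 < w(k) < j_2$, and exhibit the $2143$ pattern at positions $k < w^{-1}(j_1) < i' < w^{-1}(j_2)$. Your explicit case split ($Y \geq 1$ versus $Y = 0$, using $w(i_1)$ itself in the latter case) is just a more careful rendering of the paper's terser extraction of such a $k$.
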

\begin{proof}
Lemma~\ref{lem:vexillary-diagram-column-copies} implies that $j_1 < j_2$. Observe that if
\begin{align*}
|\{(k,w(k))\colon k < i_2 \textup{ and } w(k)<j_2\}| - |\{(k,w(k))\colon k<i_1 \textup{ and } w(k)<j_1\}| = i_2-i_1,
\end{align*}
then there exists $(k,w(k)) \in\{(k,w(k))\colon k < i_2 \textup{ and } w(k)<j_2\}$ with $k\leq i_1$ but $w(k)>j_1$. Since $w(i_1) = j_1$, it follows that $k<i_1$.

If there exists $i' > i_2$ with $(i',j_2)\in D(w)$ and $(i',j_1)\not\in D(w)$, then $k<w^{-1}(j_1)<i'<w^{-1}(j_2)$ is a $2143$ pattern.
\end{proof}
\begin{lem}
\label{lem:vexillary-diagram-semi-containment-2}
Let $w \in S_n$ be vexillary. Let $(i,j_1), (i,j_2)\in D(w)$, and suppose that $(i,j_1)$ has more squares below it than $(i,j_2)$ does. Then $(i',j_2) \in D(w)$ implies $(i',j_1)\in D(w)$ for all $i'>i$. In particular, the $k$-th square of $D(w)_{j_2}$ below row $i$ is in the same row or in a lower row than the $k$-th square of $D(w)_{j_1}$ below row $i$.
\end{lem}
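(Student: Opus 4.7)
My plan is to argue by cases on the signs of $j_1 - j_2$ and $a - b$, where I set $a := w^{-1}(j_1)$ and $b := w^{-1}(j_2)$. In each case I will either verify the claim directly or manufacture a $2143$ pattern, contradicting vexillarity. Writing $N_j^i$ for the number of squares in $D(w)_j$ strictly below row $i$, the hypothesis becomes $N_{j_1}^i > N_{j_2}^i$. Since both $(i, j_1), (i, j_2) \in D(w)$, note that $a, b > i$ and $w(i) > \max(j_1, j_2)$.

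Two of the four cases require no real work. If $j_1 < j_2$ and $a > b$, the conclusion is automatic: for $i' > i$ with $(i', j_2) \in D(w)$, one has $i' < b < a$ and $w(i') > j_2 > j_1$, so $(i', j_1) \in D(w)$. If $j_1 > j_2$ and $a < b$, then every $k \in (i, a)$ with $w(k) > j_1$ also satisfies $k < b$ and $w(k) > j_2$, giving $N_{j_2}^i \geq N_{j_1}^i$ and contradicting the hypothesis; this case cannot occur.

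The two remaining cases are dual. For $j_1 < j_2$ and $a < b$, I will first reduce the claim to showing that no $k \in (a, b)$ has $w(k) > j_2$, since $(i', j_2) \in D(w)$ with $i' < a$ automatically forces $(i', j_1) \in D(w)$. Then I will establish the identity
\[
N_{j_1}^i - N_{j_2}^i = |\{a' \in (i, a) : j_1 < w(a') < j_2\}| - |\{k \in (a, b) : w(k) > j_2\}|
\]
by carefully splitting the row ranges $(i, a)$ and $(a, b)$ and comparing contributions to the two columns. If the second set on the right were nonempty, the hypothesis would force the first to be nonempty, and picking $a'$ and $k$ from each would yield a $2143$ pattern at positions $a' < a < k < b$ with values $w(a'), j_1, w(k), j_2$. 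The case $j_1 > j_2$ and $a > b$ is symmetric: the claim reduces to ruling out $k_1 \in (i, b)$ with $j_2 < w(k_1) < j_1$, and the analogous identity
\[
N_{j_1}^i - N_{j_2}^i = |\{k_2 \in (b, a) : w(k_2) > j_1\}| - |\{k_1 \in (i, b) : j_2 < w(k_1) < j_1\}|
\]
produces a $2143$ pattern at $k_1 < b < k_2 < a$ with values $w(k_1), j_2, w(k_2), j_1$ whenever both sets are nonempty.

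The main obstacle I expect is the careful bookkeeping in deriving the counting identities above: one must match up which rows contribute to each column and isolate the rows generating the discrepancy $N_{j_1}^i - N_{j_2}^i$, so that the hypothesis can be directly translated into the existence of a $2143$ witness. The ``in particular'' clause will then follow immediately from the set-theoretic inclusion $\{i' > i : (i', j_2) \in D(w)\} \subseteq \{i' > i : (i', j_1) \in D(w)\}$, which implies that the $k$-th smallest row index on the left is at least the $k$-th smallest row index on the right.
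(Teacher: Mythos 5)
Your proof is correct, and at its core it uses the same mechanism as the paper's: from the hypothesis, find a row $\ell > i$ with $(\ell,j_1)\in D(w)$ but $(\ell,j_2)\notin D(w)$; from an assumed counterexample, get a row $i'>i$ with $(i',j_2)\in D(w)$ but $(i',j_1)\notin D(w)$; then observe that the positions $\ell < w^{-1}(j_1) < i' < w^{-1}(j_2)$ (if $j_1<j_2$) or $i' < w^{-1}(j_2) < \ell < w^{-1}(j_1)$ (if $j_2<j_1$) form a $2143$ pattern. The paper does exactly this in three sentences with no case analysis on $\mathrm{sgn}(a-b)$: once a counterexample is assumed, the ordering of $\ell$, $a=w^{-1}(j_1)$, $i'$, $b=w^{-1}(j_2)$ is automatically forced by the definition of $D(w)$, so the only split needed is on $\mathrm{sgn}(j_1-j_2)$. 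Your four-way split and the explicit counting identities for $N_{j_1}^i - N_{j_2}^i$ are correct (I checked both identities), and cases~1 and~2 do correctly isolate the vacuous situations, but they are more machinery than the statement requires: the paper needs only the existence of one $\ell$, which the strict inequality $N_{j_1}^i > N_{j_2}^i$ gives for free, rather than the full accounting of the difference. Both arguments are sound; yours is a more heavily instrumented version of the same $2143$-pattern construction.
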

\begin{proof}
Suppose there exists $i' > i$ such that $(i',j_2)\in D(w)$ but $(i',j_1)\not\in D(w)$. Because $(i,j_1)$ has more squares below it than $(i,j_2)$ does, there exists $\ell>i$ with $(\ell,j_1)\in D(w)$ and $(\ell,j_2)\not\in D(w)$. If $j_1 < j_2$, then $j_1 < w^{-1}(\ell) < j_2 < w^{-1}(i')$ is a $2143$ pattern; if $j_2 < j_1$, then $j_2 < w^{-1}(i') < j_1 < w^{-1}(\ell)$ is a $2143$ pattern.
\end{proof}
\begin{lem}
\label{lem:vexillary-diagram-semi-containment-3}
Let $w\in S_n$ be vexillary. Let $(i_1, j_1), (i_2, j_2) \in D(w)$ be two linked squares with $j_1 < j_2$, and let $i_1'<i_1$ be minimal so that $(i,j_1) \in D(w)$ for all $i_1' \leq i \leq i_1$. Then there are exactly $i_1 - i_1'$ indices $k_1 < \dots < k_{i_1-i_1'}$ for which $i_1' \leq k_m < i_2$ and $(k_m,j_2)\in D(w)$. Furthermore, $(k_m,j_2)$ is linked to $(i_1' + m - 1, j_1)$.
\end{lem}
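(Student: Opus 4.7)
By Lemma~\ref{lem:vexillary-diagram-column-copies}(1) we already have $i_1 \leq i_2$. The plan is to reduce everything to a single structural claim: \emph{for every $k < i_1'$, $w(k) \notin (j_1, j_2)$.} I would prove this claim by contradiction. Suppose $k < i_1'$ satisfies $j_1 < w(k) < j_2$. By minimality of $i_1'$, $(i_1'-1, j_1) \notin D(w)$; since $i_1' - 1 < i_1 < w^{-1}(j_1)$, this forces $w(i_1'-1) < j_1$ (equality with $j_1$ is impossible as $w^{-1}(j_1) > i_1 \geq i_1'$). The case $k = i_1'-1$ then contradicts $w(k) > j_1$ directly. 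For $k < i_1'-1$, the four positions $k < i_1'-1 < i_2 < w^{-1}(j_2)$ carry values $w(k), w(i_1'-1), w(i_2), j_2$, and the chain $w(i_1'-1) < j_1 < w(k) < j_2 < w(i_2)$ shows that these values realize a $2143$ pattern, contradicting vexillarity.

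With this structural claim in hand, the counting is routine. For $k < i_1'$, neither $w(k) = j_1$ nor $w(k) = j_2$ is possible (since $w^{-1}(j_1), w^{-1}(j_2) > i_2 > k$), so the claim gives $\{k < i_1' : (k, j_1) \in D(w)\} = \{k < i_1' : (k, j_2) \in D(w)\}$ and $r_{D(w)}(i_1', j_1) = r_{D(w)}(i_1', j_2)$. Using the identity $|\{k < I : (k,J) \in D(w)\}| = (I-1) - r_{D(w)}(I, J)$ whenever $I < w^{-1}(J)$, the linking condition immediately yields $|\{k < i_2 : (k, j_2) \in D(w)\}| = |\{k < i_1 : (k, j_1) \in D(w)\}|$; subtracting the two equal counts at row $i_1'$ and using that $[i_1', i_1) \subseteq D(w)_{j_1}$ gives exactly $i_1 - i_1'$ indices $k_1 < \cdots < k_{i_1-i_1'}$ as claimed.

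For the linking assertion, set $r^* := r_{D(w)}(i_1, j_1)$; since consecutive squares of $D(w)$ in the same column preserve rank (the rank difference is $[w(i) < j_1] = 0$ for $(i, j_1) \in D(w)$), every row of the run $[i_1', i_1]$ in column $j_1$ has rank $r^*$. For $k \in [i_1', k_1)$ the minimality of $k_1$ forces $w(k) < j_2$ (otherwise $(k, j_2) \in D(w)$ with $k < k_1$); combined with the structural claim this gives $r_{D(w)}(k_1, j_2) = r^* + (k_1 - i_1')$, so $(k_1, j_2)$ is linked to $(i_1', j_1)$. For the inductive step, no $k \in (k_{m-1}, k_m)$ lies in $D(w)_{j_2}$, so $r_{D(w)}(k_m, j_2) - r_{D(w)}(k_{m-1}, j_2) = k_m - k_{m-1} - 1$; induction then yields $r_{D(w)}(k_m, j_2) = r^* + k_m - i_1' - (m-1)$, which is exactly the linking $(k_m, j_2) \sim (i_1' + m - 1, j_1)$.

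The main obstacle is isolating the correct structural claim and establishing it via the single $2143$ pattern above; once that is in place, the rest is careful rank arithmetic using the identity $|D(w)_J \cap [1, I)| = (I-1) - r_{D(w)}(I, J)$ and the constancy of rank along contiguous column runs of $D(w)$.
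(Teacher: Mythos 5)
Your proposal is correct and follows essentially the same route as the paper: the heart of both arguments is the same $2143$ pattern on positions $k < i_1'-1 < i_2 < w^{-1}(j_2)$ (using $w(i_1'-1) < j_1$ from the minimality of $i_1'$) to rule out any $k < i_1'$ with $j_1 < w(k) < j_2$, after which the count and the linking follow from the same rank bookkeeping. Your phrasing of the count via equal numbers of diagram squares below rows $i_1'$, $i_1$, $i_2$ and your inductive rank computation are just a reorganization of the paper's explicit formulas, so no substantive difference.
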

\begin{proof}
Lemma~\ref{lem:vexillary-diagram-column-copies} implies that $i_1 < i_2$. Thus,
\[
\{(k,w(k))\colon k<i_1 \textup{ and } w(k)<j_1\} \subseteq\{(k,w(k))\colon k < i_2 \textup{ and } w(k)<j_2\}.
\]
Since $(i,j_1)\in D(w)$ for all $i_1' \leq i \leq i_1$, we know $w(i) > j_1$ for all $i_1' \leq i \leq i_1$ and hence,
\[
\{(k,w(k))\colon k<i_1' \textup{ and } w(k)<j_1\} = \{(k,w(k))\colon k<i_1 \textup{ and } w(k)<j_1\}.
\]
If some $(k,w(k)) \in \{(k,w(k))\colon k < i_2 \textup{ and } w(k)<j_2\} \setminus \{(k,w(k))\colon k<i_1' \textup{ and } w(k)<j_1\}$ satisfies $k < i_1'$, then $w(k) \geq j_1 > w^{-1}(i_1'-1)$, and it follows that $w^{-1}(i_1'-1)<w(k)<j_2<w^{-1}(i_2)$ forms a $2143$ pattern. Because $w$ is vexillary, it follows that all $i_2 - i_1$ such elements $(k,w(k))$ satisfies $k\geq i_1'$. There are, thus, exactly $i_2 - i_1$ elements $b$ such that $i_1' \leq b < i_2$ and $(b,j_2)\not\in D(w)$; therefore, there are exactly $(i_2 - i_1') - (i_2 - i_1) = i_1 - i_1'$ elements $k$ such that $i_1' \leq k < i_2$ and $(k,j_2)\in D(w)$.

The linkedness result follows from the facts that
\[
r_{D(w)}(i,j_1) = r_{D(w)}(i_1,j_1) \qquad\textup{ for all } i_1'\leq i<i
\]
and
\[
r_{D(w)}(k_m,j_2) = r_{D(w)}(i_2,j_2) - i_2 + k_m + (i_1 - i_1' - m + 1)\qquad\textup{ for all } m\in[i_1-i_1'].\qedhere
\]
\end{proof}
\begin{proof}[Proof of Theorem~\ref{thm:top-diagrams}]
Denote the $k$-th square in $A(w)$ by $(i_k^*, j_k)$, and suppose that $(i_k^*,j_k)$ is the $m_k$-th highest square of $D(w)$ in its column. We will construct diagrams $\mathcal D^k = (D^k,r^k,F^k)$ satisfying the following properties for all $k$:

\begin{itemize}
\item $\mathcal D^k \in\BD(w)$,

\item $x^{\wt(\mathcal D^{k-1})} \mid x^{\wt(\mathcal D^k)}$,

\item $(i,j_\ell)\in D^k\setminus F^k$ for all $\ell\in[k]$ and $1\leq i \leq m_\ell$,

\item $(i,j_\ell)\in F^k$ for all $\ell\in[k]$ and $m_\ell+1\leq i \leq i_\ell^*$,

\item $(i,j_\ell)\not\in F^k$ for all $\ell\in[k]$ and $i_\ell^*+1\leq i\leq n$,

\item $\mathcal D^k_{j_\ell} = \mathcal D^{k-1}_{j_\ell}$ for all $\ell\in[k-1]$. 
\end{itemize}

Set $\mathcal D^0\colonequals\mathcal D$. Given $\mathcal D^{k-1}$, we construct $\mathcal D^k$ according to the following procedure.

Let $i_k\in D^{k-1}_{j_k}\setminus F^{k-1}_{j_k}$ be the $m_k$-th highest live square in its column. Observe that $\mathcal D^{k-1}_{j_k}$ contains no dead squares $(f,j_k)$ linked to a live square below row $i_k$: Lemma~\ref{lem:dead-squares-linked-below-Aw} guarantees that $(f,j_k)$ is linked to $(m_\ell,j_\ell)$ for some $\ell<k$, and the definition of $\prec$ guarantees that $(f,j_k)$ is linked to a dead square $(f,j_\ell)$ in the same row. In particular, $\mathcal D^{k-1}_{j_k}$ contains no dead squares below row $i_k^*+1$.

Let $L$ denote the set of dead squares in $F^{k-1}$ which are below row $i_k$ and are linked to $(i_k,j_k)$. Let $C = \{c_1, \dots, c_r\}$ be the set of columns that have a square in $L$; note that $C\cap\{j_1,\dots,j_{k-1}\}$ is empty as all dead cells in column $j_\ell$ are linked to $(m_\ell,j_\ell)$ and hence not linked to $(i_k,j_k)$.

We shall first move squares in $L$ horizontally between the columns in $C$ and reindex the $c_i$ so that whenever $k < k'$, all squares of $L$ in column $c_k$ are above all squares of $L$ in column $c_{k'}$ using the following process. Let $(r_i^-, c_i)$ denote the live square in column $c_i$ that is linked to $(i_k,j_k)$, and let $r_i^+>r_i^-$ be minimal such that $(r_i^+,c_i)$ is live. The squares in $(i,c_j)$ for $r_j^-< i < r_j^+$ are either dead or empty; if they are dead, then they are linked to $(i_k,j_k)$. Furthermore, if $r_j^- < i < r_j^+$ and $r_k^- < i < r_k^+$, then $(i,c_j)$ and $(i,c_k)$ cannot both be dead. If $[r_i^-,r_i^+]\supseteq[r_j^-,r_j^+]$ for some $i,j$, then we may move all dead squares in $\{(i,c_j)\colon r_j^-<i<r_j^+\}$ to column $c_i$ (to break a tie $[r_i^-,r_i^+]=[r_j^-,r_j^+]$, we move all dead squares to the column with the smaller index). Now, reorder the $c_i$ so that $r_1^- > r_2^- > \dots$, and move all dead squares in $\{(i,c_j)\colon r_j^-<i<r_j^+\}$ to $(i,c_k)$ for $k$ minimal such that $i<r_k^+$.

 We now modify each column $c_i \in C$, starting from $c_1$ and working towards $c_r$, according to the following procedure:

Let $x_1 < \dots < x_{k_1}$ be the rows below $i_k$ where $(x_s, c_i)$ is dead and linked to $(i_k,j_k)$ and where $(x_s,j_k)$ is live.  Also, let $y_1 < \dots < y_{k_2}$ be the rows below $i_k$ where $(y_s,c_i)$ is dead and linked to $(i_k,j_k)$ and where $(y_s,j_k)$ is empty. Let $z_1 < \dots < z_{k_1}$ be the first $k_1$ rows below row $\max\{x_{k_1},y_{k_2}\}$ where $(z_s, c_i)$ is live and $(z_s,j_k)$ is empty; such rows $z_1, \dots, z_{k_1}$ exist because the live square $(a,c_i)$ immediately above the dead squares $(x_s,c_i)$ and $(y_s,c_i)$ is linked to $(i_k,j_k)\in A(\mathcal D^k)$, so Lemma~\ref{lem:vexillary-diagram-semi-containment-1} implies $(a,c_i)$ has at least as many live squares below it as as $(i_k,j_k)$ does.

Let $k_3$ be the number of rows between $i_k$ and $\max\{x_{k_1}, y_{k_2}\}$ which have an empty space in column $c_i$ and a live square in column $j_k$. Modify the portions of columns $c_i$ and $j_k$ below row $i_k$ such that:
\begin{itemize}
\item Column $c_i$ has live squares in rows $x_1, \dots, x_{k_1}$ and any rows below $i_k$ which previously had live squares, except for rows $z_1, \dots, z_{k_1}$,

\item Column $j_k$ has dead squares in all rows between $i_k + 1$ and $\max\{x_{k_1}, y_{k_2}\}$, inclusive, along with dead squares in any other rows which already has dead squares, and

\item Column $j_k$ has live squares in all rows below $\max\{x_{k_1}, y_{k_2}\}$ which already had a live square, rows $z_1, \dots, z_{k_1}$, and the first other $k_3$ rows below $\max\{x_{k_1}, y_{k_2}\}$.
\end{itemize}

See Figure~\ref{fig:dtop-procedure1} for an example. Letting $S_{c_i; i}$ and $S_{j_k;i}$ denote the set of live squares in the modified columns $c_i$ and $j_k$ respectively, Lemmas~\ref{lem:vexillary-diagram-semi-containment-1} and~\ref{lem:vexillary-diagram-semi-containment-2} imply that $S_{c_i;i} \leq D(w)_{c_i}$ and $S_{j_k;i}\leq D(w)_{j_k}$. Then, Lemma~\ref{lem:BD-characterization} implies that the resulting diagram is in $\BD(w)$.

\begin{figure}[ht]
\includegraphics{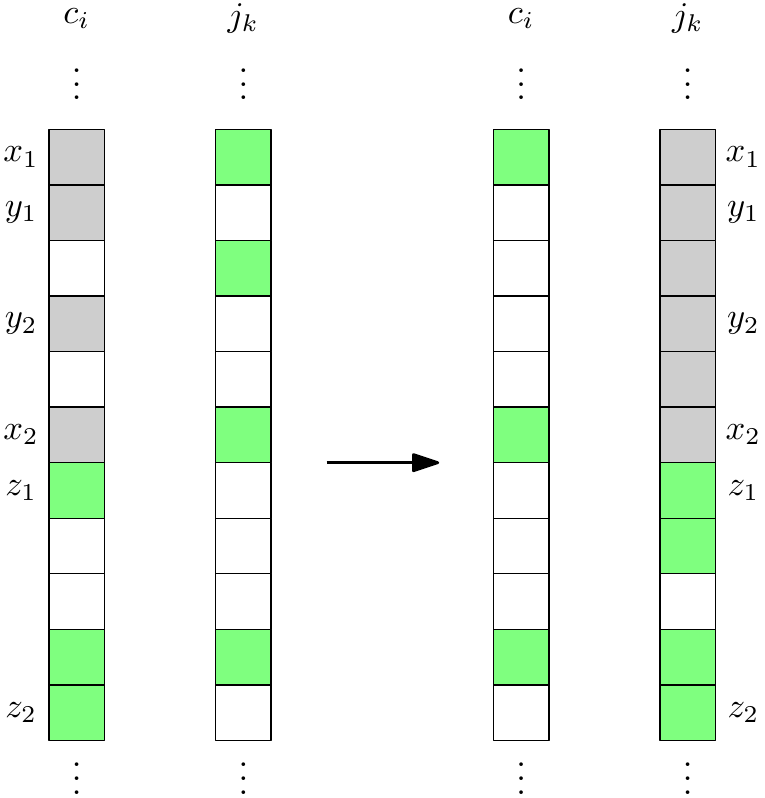}
\caption{In this example, $k_3 = 1$ because row $(y_1+1,c_i)$ is empty and $(y_1+1,j_k)$ is live. After performing the procedure, $(z_2-3,j_k)$ is live because it is the first row below $\max\{x_2,y_2\}$ which did not already have a live square and is not row $z_1$ or $z_2$.}
\label{fig:dtop-procedure1}
\end{figure} 

At this point, every square in $L$ is in column $j_k$. We now use the following procedure to bubble up squares in column $j_k$ so that $(i,j_k)$ is live for all $\ell\in[k]$ and $1\leq i\leq m_\ell$:
\begin{itemize}
\item If there is a column $j\neq j_k$ such that $(i_k,j)\in F$ is dead and linked to $(i_k,j_k)$, swap the portions of columns $j$ and $j_k$ in and above row $i_k$. Then fill in dead squares between $(i_k,j_k)$ and the next lowest live square above it, removing matching dead squares from other columns if necessary. Such a $j$ is necessarily not equal to $j_\ell$ for $\ell\in[k-1]$, as those columns contain only dead squares linked to $(m_\ell,j_\ell)$. Letting $S_j$ and $S_{j_k}$ denote the set of live squares in the modified columns $j$ and $j_k$ respectively, Lemmas~\ref{lem:vexillary-diagram-column-copies} and~\ref{lem:vexillary-diagram-semi-containment-3} imply that $S_j\leq D(w)_j$ and $S_{j_k}\leq D(w)_{j_k}$. Then Lemma~\ref{lem:BD-characterization} implies that the resulting diagram is in $\BD(w)$.

\item If there is no such column, and for the maximal $i < i_k$ so that $i\not\in D_{j_k}\setminus F_{j_k}$, $(i,j_k)$ is empty, then apply bubbling moves at $i+1,\dots,i_k-1$ followed by a K-bubbling move at $i_k$. The resulting diagram is in $\BD(w)$.

\item If there is no such column, and for the maximal $i < i_k$ so that $i\not\in D_{j_k}\setminus F_{j_k}$, $(i,j_k)$ is dead, then remove this dead square and apply bubbling moves at $i+1,\dots,i_k-1$ followed by a K-bubbling move at $i_k$. The resulting diagram is in $\BD(w)$.
\end{itemize}

When this procedure terminates, the square $(i,j_k)$ is live for all $1\leq i \leq m_k$, is dead for all $m_k+1\leq i\leq i_k^*$, and is live or empty otherwise. Columns $j_1, \dots, j_{k-1}$ were left invariant throughout this construction. We may push down any remaining live squares in $\{(i_k+1,j_k),\dots,(i_k^*,j_k)\}$ so that there are no live squares in this region and then fill in any empty squares in $\{(i_k+1,j_k),\dots,(i_k^*,j_k)\}$ with dead squares. We set $\mathcal D^k$ to be the resulting diagram.
\end{proof}

\newtheorem*{thm:main2}{Theorem~\ref{thm:main2}}
\begin{thm:main2}
Let $w \in S_n$ be a vexillary permutation. Then $\supp(\mathfrak G_w^\top) = \supp(\chi_{D^\top(w)})$.\end{thm:main2}
\begin{proof}[Proof of Theorem~\ref{thm:main2}]
Theorem~\ref{thm:top-diagrams} implies that any monomial appearing in $\mathfrak G_w^\top$ is equal to $x^{\wt(D)}$ for some $(D,r,F)\in\BD(w)$ with $F = F^\top(w)$. Any such diagram $D$ satisfies $D\leq D^\top(w)$, so
\[
\supp(\mathfrak G_w^\top) \subseteq \{\wt(D)\colon D\leq D^\top(w)\}.
\]
By construction, $D^\top(w) \in \BD(w)$. Furthermore, if $(i,j)\in F^\top(w)$, then $(i',j) \in D^\top(w)$ for all $i' < i$. It follows (e.g.\ by Lemma~\ref{lem:BD-characterization}) that 
\[\{
\wt(D)\colon D\leq D^\top(w)\}\subseteq \{\wt(\mathcal D)\colon \mathcal D\in\BD(\mathcal D^\top(w))\}\subseteq\supp(\mathfrak G_w^\top).\]
We conclude that
\[
\supp(\mathfrak G_w^\top) = \{\wt(D)\colon D\leq D^\top(w)\}.
\]
Finally, Proposition~\ref{prop:support-of-chi} guarantees that $\{\wt(D)\colon D\leq D^\top(w)\} = \supp(\chi_{D^\top(w)})$.
\end{proof}

The next result asserts that if a monomial $x^{\wt(D)}$ appearing in $\mathfrak G_w$ is represented by a bubbling diagram $\mathcal D = (D,r,F)$, then any monomial $x^\alpha$ which divides $x^{\wt(D)}$ and appears in $\mathfrak G_w$ can be represented by a bubbling diagram whose dead squares are contained in $F$.
\begin{thm}
\label{thm:remove-dead-squares}
Let $\mathcal D = (D,r,F) \in \BD(w)$. Suppose that there exists $i\in[n]$ so that $x^{\wt(D)}/x_i$ appears with nonzero coefficient in $\mathfrak G_w$. Then there is $\mathcal D^- = (D^-, r^-, F^-) \in\BD(w)$ so that $\wt(D^-) = \wt(D) - e_i$, $F^-\subsetneq F$, and $r=r^-$ on $F^-$.
\end{thm}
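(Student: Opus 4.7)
The plan is to use Theorem~\ref{thm:main1} to get existence of some bubbling diagram of the correct weight, and then modify it by an exchange argument to satisfy the stronger constraints $F^- \subsetneq F$ and $r^- = r\big|_{F^-}$.

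First, by Theorem~\ref{thm:main1}, the hypothesis $x^{\wt(D)}/x_i \in \supp(\mathfrak G_w)$ yields some $\widetilde{\mathcal D} = (\widetilde D, \widetilde r, \widetilde F) \in \BD(w)$ with $\wt(\widetilde{\mathcal D}) = \wt(\mathcal D) - e_i$. A parity count --- each K-bubbling move increases both $|D|$ and $|F|$ by one while a bubbling move preserves both --- forces $|\widetilde F| = |F| - 1$. Thus it suffices to arrange $\widetilde F \subseteq F$ with ranks inherited from $r$: once this holds, the strict containment $\widetilde F \subsetneq F$ is automatic.

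To produce such a $\widetilde{\mathcal D}$, I would run an exchange argument: among all bubbling diagrams in $\BD(w)$ of weight $\wt(\mathcal D) - e_i$, pick one maximizing the number of dead squares whose position \emph{and} rank agree with some dead square of $\mathcal D$. If this maximum is $|F|-1$, we are done. Otherwise, fix any ``discrepant'' dead square $(a,b)\in \widetilde F \setminus F$ (or with non-matching rank). The goal is to build a new $\widetilde{\mathcal D}' \in \BD(w)$ of the same weight in which $(a,b)$ is replaced by a dead square drawn from $F$, contradicting maximality. The construction phrases itself most cleanly through Lemma~\ref{lem:BD-characterization}: I would describe the swap as a concrete modification of the pair $(\widetilde D, \widetilde F)$, then verify the five clauses of $\mathcal D(w)$-admissibility for the resulting pair.

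Verifying admissibility is where the vexillary hypothesis is used heavily. Shuffling live squares between linked columns while maintaining admissibility is precisely the kind of operation enabled by the column-comparison Lemmas~\ref{lem:vexillary-diagram-column-copies}, \ref{lem:vexillary-diagram-semi-containment-1}, \ref{lem:vexillary-diagram-semi-containment-2}, and~\ref{lem:vexillary-diagram-semi-containment-3}; these are the same workhorses employed in the proof of Theorem~\ref{thm:top-diagrams} and should suffice to reroute live squares through the swap target. The main obstacle will be the fifth admissibility clause --- that no two dead squares in the same row of the swapped diagram correspond to linked live squares of $\mathcal D(w)$ --- because the swap may change which live squares sit immediately above which dead squares. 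The swap target therefore cannot be chosen arbitrarily from $F\setminus\widetilde F$; it must be picked so that its linking class is compatible with $(a,b)$'s column, the choice being guided by an analysis of linking classes along the columns involved.

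If a single swap proves too rigid, I would fall back to induction on $|\widetilde F\setminus F|$, performing swaps one K-bubble at a time along a sequence of admissible intermediate diagrams constructed via the canonical bubbling sequence of Definition~\ref{defn:canonical-bubbling-sequence}. In that setting, each step reduces the discrepancy by one, and the vexillary column-comparison lemmas again certify that each intermediate $(D'',F'')$ satisfies Lemma~\ref{lem:BD-characterization}, terminating when $\widetilde F\subseteq F$ with matching ranks as required.
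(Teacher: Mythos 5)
Your opening moves are sound: invoking Theorem~\ref{thm:main1} to obtain some $\widetilde{\mathcal D}$ of weight $\wt(\mathcal D)-e_i$, and the count $|D|-|F|=|D(w)|$ forcing $|\widetilde F|=|F|-1$, are both correct, and you correctly identify that the real content of the theorem is forcing the dead squares of the new diagram to sit inside $F$ with matching ranks. But from that point on the proposal is a plan rather than a proof. The entire exchange step --- the choice of swap target in $F\setminus\widetilde F$, the explicit modification of $(\widetilde D,\widetilde F)$, and the verification of the five admissibility clauses of Lemma~\ref{lem:BD-characterization} --- is left as something you ``would'' do, and you yourself flag clause (5) (no two dead squares in a row above linked live squares) as the main obstacle without resolving it. Since that is exactly where the argument could fail, the proof is not complete. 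Note also that matching ranks is a genuine extra constraint: the rank of a dead square is determined by the live square immediately above it, so even after arranging $\widetilde F\subseteq F$ as position sets you must still control where the live squares of $\widetilde D$ sit relative to those of $D$.

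The paper avoids this matching problem entirely by turning the argument around: it uses the diagram $\mathcal C$ of weight $\wt(D)-e_i$ only as a \emph{guide}, and modifies $\mathcal D$ itself. If row $i$ of $\mathcal D$ already contains a dead square, delete it and stop. Otherwise there is a live square $(i,j)\in D\setminus F$ with $(i,j)\notin C$, and one bubbles the contiguous run of live squares in column $j$ up or down (according to whether the corresponding live square of $C_j$ lies above or below row $i$), deleting a dead square of $F$ whenever one blocks the move. Each step either removes a dead square of $F$ (done) or moves the surplus to a new row $i'$, and the process terminates because the live squares of $D$ strictly approach their counterparts in $C$. Since dead squares are only ever deleted and live squares only slide within their own columns, $F^-\subsetneq F$ and $r^-=r$ on $F^-$ hold by construction, with no exchange argument and no linking-class analysis needed. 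If you want to salvage your approach, you would need to actually exhibit the swap and check admissibility; but the paper's one-column perturbation is the cleaner route.
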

\begin{proof}
Fix a diagram $\mathcal C = (C, s, G) \in\BD(w)$ so that $\wt(C) = \wt(D) - e_i$.

If row $i$ of $\mathcal D$ contains a dead square, then removing that square gives the desired diagram $\mathcal D^-$. Otherwise, there must be a square $(i,j)$ so that $(i,j)\in D\setminus F$ and $(i,j)\not\in C$. Suppose that $(i,j)\in D\setminus F$ is the $k$-th uppermost live square in the column $D_j$. There are two cases:
\begin{enumerate}
\item Suppose that the $k$-th uppermost live square in the column $C_j$ is above row $i$. Let $i' < i$ be maximal so that $D_j$ does not have a live square in the $i'$-th row; such a position exists because $C_j$ has its $k$-th uppermost square above row $i$. Apply a bubbling move to the live squares $(i,j), (i-1,j), \dots, (i'+1,j)$ of $D$. If $(i',j)\in F$, then simply remove it to make the bubbling move legal. The resulting diagram is in $\BD(w)$.

\item Suppose that the $k$-th uppermost live square in the column $C_j$ is below row $i$. Let $i' > i$ be minimal so that $D_j$ does not have a live square in the $i'$-th row. Because the $k$-th uppermost live square in $C_j$ is below row $i$, the diagram obtained from $D$ by ``pushing down'' the live squares $(i,j), (i+1, j), \dots, (i'-1,j)$ of $D$ by one space, removing a dead square at $(i',j)$ if it exists, is again a diagram in $\BD(w)$.
\end{enumerate}
In either case, if a dead square was removed then the resulting diagram has weight $\wt(D) - e_i$, giving our desired bubbling diagram $\mathcal D^-$. If no dead square was removed, then the resulting diagram has weight $\wt(D) - e_i + e_{i'}$ and has more squares in row $i'$ than does $C$, so we may repeat the process using row $i'$ until a dead square is removed.

At each step of the process, the squares in $D$ move closer to their counterparts in $C$. Because $\wt(C) = \wt(D) - e_i$ and $|D\setminus F| = |C\setminus G|$, there is a row $r$ so that $\#\{j\colon(r,j)\in C\setminus G\} > \#\{j\colon(r,j)\in D\setminus F\}$. In particular, there is a column $j_*$ in which a live square $(r,j_*)\in C\setminus G$ is not in the same row as its counterpart in $(r', j_*)\in D\setminus F$; the algorithm will eventually move $(r',j_*)$ to $(r,j_*)$, so this procedure will terminate.
\end{proof}
\begin{defn}
	\label{defn:sbd}
Let $\SBD(w)$ denote the set of bubbling diagrams $\mathcal D = (D,r,F)\in\mathcal{BD}(w)$ for which every dead square is linked to a distinguished live square in its column.
\end{defn}
\begin{thm}
\label{thm:supp-Gw-SBD}
If $w\in S_n$ is vexillary, then $\supp(\mathfrak G_w) = \{\wt(\mathcal D)\colon \mathcal D \in \SBD(w)\}$.
\end{thm}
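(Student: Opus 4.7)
The inclusion $\{\wt(\mathcal D) : \mathcal D \in \SBD(w)\} \subseteq \supp(\mathfrak G_w)$ is immediate from $\SBD(w) \subseteq \BD(w)$ together with Theorem~\ref{thm:main1}. For the reverse inclusion, given $\alpha \in \supp(\mathfrak G_w)$, Theorem~\ref{thm:main1} yields some $\mathcal D_0 \in \BD(w)$ with $\wt(\mathcal D_0) = \alpha$, and Theorem~\ref{thm:top-diagrams} applied to $\mathcal D_0$ produces $\mathcal D^* = (D^*, r^*, F^\top(w)) \in \BD(w)$ with $x^\alpha \mid x^{\wt(\mathcal D^*)}$. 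The strategy is to first check that $\mathcal D^*$ already lies in $\SBD(w)$, then iteratively apply Theorem~\ref{thm:remove-dead-squares} to reduce the weight down to $\alpha$ while remaining in $\SBD(w)$.

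For the first step, admissibility of $(D^*, F^\top(w))$ with respect to $\mathcal D(w)$ forces a rigid column structure in each column $j_k$ carrying a distinguished live square $(i_k^*, j_k) \in A(w)$. Writing $m_k$ for the position of $(i_k^*, j_k)$ among squares of $D(w)_{j_k}$, the dead squares $F^\top(w)_{j_k}$ occupy exactly rows $m_k+1, \ldots, i_k^*$; admissibility bounds the $\ell$-th highest live of $D^*_{j_k}$ above by the $\ell$-th smallest row in $D(w)_{j_k}$ while forbidding it from lying in dead rows, so by pigeonhole the top $m_k$ live squares of $D^*_{j_k}$ fill rows $1, \ldots, m_k$ exactly. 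Consequently the $m_k$-th highest live of $\mathcal D^*$ (the distinguished live of $A(\mathcal D^*)$ in column $j_k$) sits at row $m_k$ with rank $r_{D(w)}(i_k^*, j_k) - (i_k^* - m_k)$, and Definition~\ref{defn:bubbling-diagram} then forces $r^*(i, j_k) = r_{D(w)}(i_k^*, j_k) - (i_k^* - i)$ for each $(i, j_k) \in F^\top(w)$. A direct computation gives $i - m_k = r^*(i, j_k) - r^*(m_k, j_k)$, so every dead square of $\mathcal D^*$ is linked to the distinguished live square in its column, i.e.\ $\mathcal D^* \in \SBD(w)$.

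For the iterative reduction, starting from $\mathcal D^{(0)} = \mathcal D^*$, at step $t$ I select an index $i$ with $\wt(\mathcal D^{(t)})_i > \alpha_i$. Since $x^\alpha \mid x^{\wt(\mathcal D^{(t)}) - e_i} \mid x^{\wt(\mathcal D^*)}$, Theorem~\ref{thm:HafSupportBetween} ensures that $x^{\wt(\mathcal D^{(t)})}/x_i$ appears with nonzero coefficient in $\mathfrak G_w$, so Theorem~\ref{thm:remove-dead-squares} produces $\mathcal D^{(t+1)} \in \BD(w)$ with $\wt(\mathcal D^{(t+1)}) = \wt(\mathcal D^{(t)}) - e_i$ and $F^{(t+1)} \subsetneq F^{(t)}$. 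The main technical obstacle is verifying that $\mathcal D^{(t+1)}$ remains in $\SBD(w)$. The construction in the proof of Theorem~\ref{thm:remove-dead-squares} uses only bubbling moves within a single column and removal of dead squares, never introducing K-bubbles. Removing a dead square leaves all live squares and hence all remaining dead--to--distinguished linkings intact; bubbling a live square in a column $j$ preserves the identity of the $m$-th highest live square there (it is the same square, possibly shifted up by one row). When this $m$-th highest square is the distinguished live of column $j$, its row and rank both decrease by one in tandem, so the linking relation $d - m_k = r(d, j_k) - r(m_k, j_k)$ is preserved for every dead $(d, j_k)$ in the column; when it is not, the distinguished live is untouched. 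Since $|\wt(\mathcal D^*)| - |\alpha| \le |F^\top(w)|$, the iteration terminates after finitely many steps at a diagram in $\SBD(w)$ of weight $\alpha$.
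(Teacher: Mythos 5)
Your proposal is correct and follows essentially the same route as the paper: reduce to a diagram with dead-square set $F^\top(w)$ via Theorem~\ref{thm:top-diagrams}, then repeatedly apply Theorem~\ref{thm:HafSupportBetween} together with Theorem~\ref{thm:remove-dead-squares} to descend to the desired weight. The two verifications you spell out (that the output of Theorem~\ref{thm:top-diagrams} lies in $\SBD(w)$, and that the column-local moves in Theorem~\ref{thm:remove-dead-squares} preserve the dead-to-distinguished linkings) are exactly the details the paper leaves implicit, and your checks of them are sound.
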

\begin{proof}
%
By Theorem~\ref{thm:main1}, any monomial appearing in $\mathfrak G_w$ is of the form $x^{\wt(D)}$ for some $\mathcal D = (D,r,F)\in\BD(w)$. By Theorem~\ref{thm:top-diagrams}, there is a bubbling diagram $\mathcal C = (C, s, F^\top(w))\in\mathcal{BD}(\mathcal D^\top(w))$ so that $x^{\wt(D)}\mid x^{\wt(C)}$. Repeated application of Theorem~\ref{thm:HafSupportBetween} and Theorem~\ref{thm:remove-dead-squares} gives the desired result. 
\end{proof}
Observe that $\SBD(w)$ is precisely the set of diagrams which can be generated from $D(w)$ by any series of the following moves:
\begin{enumerate}
  \item Bubble up any live square
  \item K-bubble any distinguished live square
\end{enumerate}
In particular, once the set $A(w)$ of distinguished live squares has been determined, this procedure makes no further reference to the ranks of squares (since no pair of squares in $A(w)$ can be linked). The possible states of each column in $\SBD(w)$ are, thus, independent of the states of the other columns. Figure~\ref{fig:SBDExample} shows an example of $\SBD(w)$.
\begin{figure}
  \centering
  \begin{subfigure}{0.35\textwidth}
  \scalebox{1.2}{
  \begin{tikzpicture}
\node[scale=.75] at (-.7,1.75) {$D(w)=$};
  \filldraw[draw=black, color=green, opacity=0.5] (.5,2) rectangle (1.5,2.5);
  \filldraw[draw=black, color=green, opacity=0.5] (1,1.5) rectangle (1.5,2);
  \filldraw[draw=black, color=gold, opacity=0.75] (2,1.5) rectangle (2.5,2);
  \filldraw[draw=black, color=gold, opacity=0.75] (.5,1.5) rectangle (1,2);
  \draw[step=.5cm,black,very thin] (0,0) grid (3,3);
  \end{tikzpicture}}
  \end{subfigure}
  \hfill
  \begin{subfigure}{0.63\textwidth}
  \begin{subfigure}{0.63\textwidth}
  \scalebox{0.5}{
  \begin{tikzpicture}
  \node[scale=1.2] at (-1.5,2.75) {Column $2$:};
    \filldraw[draw=black, color=green, opacity=0.5] (0,2) rectangle (.5,2.5);
    \filldraw[draw=black, color=gold, opacity=0.75] (0,1.5) rectangle (.5,2);
    \draw[step=.5cm,black,very thin] (0,0) grid (.5,3);
    \filldraw[draw=black, color=green, opacity=0.5] (1,2.5) rectangle (1.5,3);
    \filldraw[draw=black, color=gold, opacity=0.75] (1,1.5) rectangle (1.5,2);
    \draw[step=.5cm,black,very thin] (.999,0) grid (1.5,3);
    \filldraw[draw=black, color=green, opacity=0.5] (2,2.5) rectangle (2.5,3);
    \filldraw[draw=black, color=gold, opacity=0.75] (2,2) rectangle (2.5,2.5);
    \draw[step=.5cm,black,very thin] (1.999,0) grid (2.5,3);
    \filldraw[draw=black, color=green, opacity=0.5] (3,2.5) rectangle (3.5,3);
    \filldraw[draw=black, color=gold, opacity=0.75] (3,2) rectangle (3.5,2.5);
    \filldraw[draw=black, color=lightgray] (3,1.5) rectangle (3.5,2);
    \draw[step=.5cm,black,very thin] (2.999,0) grid (3.5,3);
  \end{tikzpicture}
  \hspace{1cm}
  \begin{tikzpicture}
  \node[scale=1.2] at (-1.5,2.75) {Column $3$:};
    \filldraw[draw=black, color=green, opacity=0.5] (0,2) rectangle (.5,2.5);
    \filldraw[draw=black, color=green, opacity=0.5] (0,1.5) rectangle (.5,2);
    \draw[step=.5cm,black,very thin] (0,0) grid (.5,3);
    \filldraw[draw=black, color=green, opacity=0.5] (1,2.5) rectangle (1.5,3);
    \filldraw[draw=black, color=green, opacity=0.5] (1,1.5) rectangle (1.5,2);
    \draw[step=.5cm,black,very thin] (.999,0) grid (1.5,3);
    \filldraw[draw=black, color=green, opacity=0.5] (2,2.5) rectangle (2.5,3);
    \filldraw[draw=black, color=green, opacity=0.5] (2,2) rectangle (2.5,2.5);
    \draw[step=.5cm,black,very thin] (1.999,0) grid (2.5,3);
  \end{tikzpicture}}
  \vspace{0.1cm}
  \end{subfigure} 
  \begin{subfigure}{0.63\textwidth}
  \scalebox{0.5}{
  \begin{tikzpicture}
  \node[scale=1.2] at (-1.5,2.75) {Column $5$:};
    \filldraw[draw=black, color=gold, opacity=0.75] (0,1.5) rectangle (.5,2);
    \draw[step=.5cm,black,very thin] (0,0) grid (.5,3);
    \filldraw[draw=black, color=gold, opacity=0.75] (1,2) rectangle (1.5,2.5);
    \draw[step=.5cm,black,very thin] (.999,0) grid (1.5,3);
    \filldraw[draw=black, color=lightgray] (2,1.5) rectangle (2.5,2);
    \filldraw[draw=black, color=gold, opacity=0.75] (2,2) rectangle (2.5,2.5);
    \draw[step=.5cm,black,very thin] (1.999,0) grid (2.5,3);
    \filldraw[draw=black, color=gold, opacity=0.75] (3,2.5) rectangle (3.5,3);
    \draw[step=.5cm,black,very thin] (2.999,0) grid (3.5,3);
    \filldraw[draw=black, color=gold, opacity=0.75] (4,2.5) rectangle (4.5,3);
    \filldraw[draw=black, color=lightgray] (4,1.5) rectangle (4.5,2);
    \draw[step=.5cm,black,very thin] (3.999,0) grid (4.5,3);
    \filldraw[draw=black, color=gold, opacity=0.75] (5,2.5) rectangle (5.5,3);
    \filldraw[draw=black, color=lightgray] (5,2) rectangle (5.5,2.5);
    \draw[step=.5cm,black,very thin] (4.999,0) grid (5.5,3);
    \filldraw[draw=black, color=gold, opacity=0.75] (6,2.5) rectangle (6.5,3);
    \filldraw[draw=black, color=lightgray] (6,2) rectangle (6.5,2.5);
    \filldraw[draw=black, color=lightgray] (6,1.5) rectangle (6.5,2);
    \draw[step=.5cm,black,very thin] (5.999,0) grid (6.5,3);
  \end{tikzpicture}}
  \end{subfigure}
  \end{subfigure}
  \caption{Construction of $\SBD(w)$ for $w=146235$. The set $A(w)$ of distinguished live squares is shown in gold. Any combination of the above options for columns $2$, $3$, and $5$ will yield a valid diagram in $\SBD(w)$}
  \label{fig:SBDExample}
\end{figure}
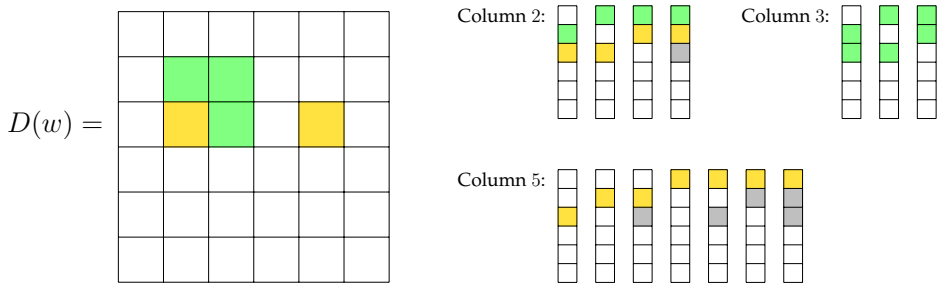
\section{Supports of homogenized Grothendieck polynomials}
\label{sec:supports-of-homog}
We deduce Theorem~\ref{thm:main3} from a ``one-column version'' of the result (Proposition~\ref{prop:one-column-M-convexity}).
\begin{defn}[{\cite{mty2019}}]
Let $D\subseteq[n]\times[k]$ be a diagram. The \textbf{Schubitope} $\mathcal S_D$ is the Newton polytope of the dual character $\chi_D$ of the flagged Weyl module.
\end{defn}
By~\cite{fms2018}, the Schubitope $\mathcal S_D$ is the Minkowski sum
\[
\mathcal S_D = \sum_{i=1}^k P(\SM_n(D_i))
\]
of Schubert matroid polytopes.

We recall the combinatorial interpretation, due to \cite{mty2019}, for the rank functions of Schubert matroids. For $I,J\subseteq[n]$, construct a string denoted $\word_I^n(J)$ by setting $k = 1, \dots, n$ and recording
\begin{itemize}
\item $\underline{\hspace{0.2cm}}$ if $k\not\in I$ and $k\not\in J$;

\item $($ if $k\not\in I$ and $k\in J$;

\item $)$ if $k\in I$ and $k\not\in J$;

\item $\star$ if $k\in I$ and $k\in J$.
\end{itemize}
Define
\[
\theta_I^n(J) \overset{\rm def}= \#\{\textup{matched $()$'s in $\word_I^n(J)$}\} + \#\{\textup{$\star$'s in $\word_I^n(J)$}\}
\]
where parentheses are matched iteratively left-to-right, removing matched pairs.

\begin{example}
\label{ex:theta_example}
	Let $n=14$, $I=\{2,4,6,9,10,11,12,13\}$, and $J=\{3,4,5,8,9,12,14\}$. Coloring $I-J$, $J-I$, and $I\cap J$ respectively red, blue, and purple, we show how to compute $\theta_I^n(J)$ in Figure~\ref{fig:theta_example}.
\end{example}

\begin{figure}[ht]
	\begin{center}
		\includegraphics[scale=0.8]{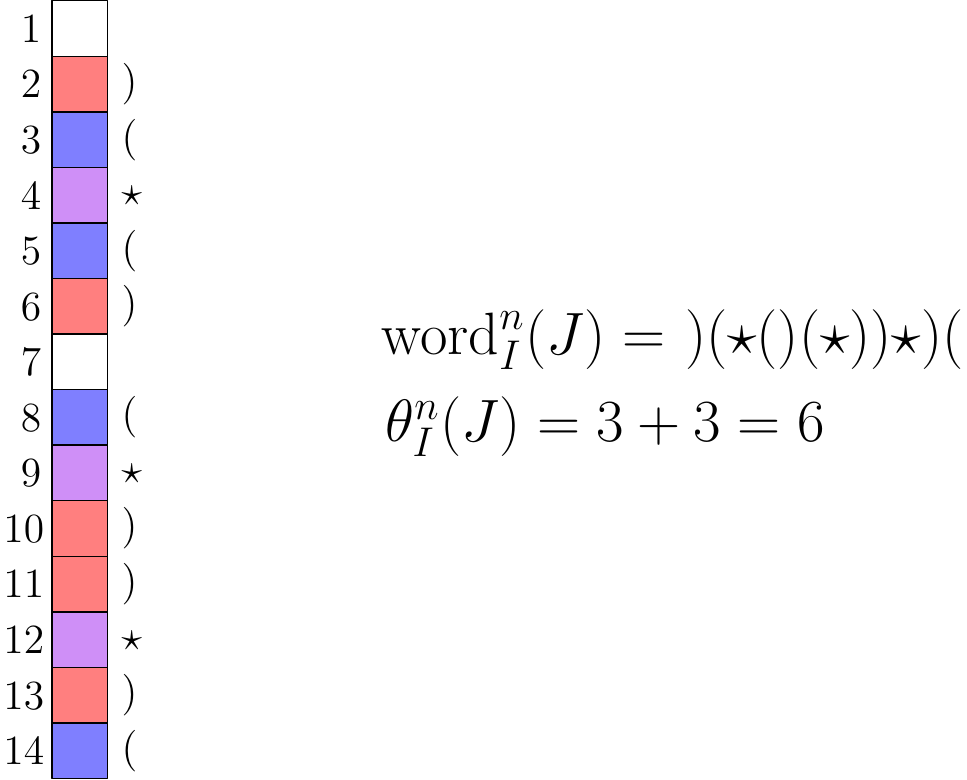}
	\end{center}
	\caption{Left: the sets $I$, $J$, and $I\cap J$ from Example~\ref{ex:theta_example}. Right: the word $\word_I^n(J)$ and the number $\theta_I^n(J)$.}
	\label{fig:theta_example}
\end{figure}

\begin{thm}[{\cite{fms2018}*{Theorem 10}}]
For any $n$ and any $I,J\subseteq[n]$,
\[
r_{\SM_n(I)}(J) = \theta_I^n(J).
\]
\end{thm}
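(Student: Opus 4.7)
The plan is to prove both inequalities. Write $I=\{s_1<\cdots<s_r\}$, $s=|I\cap J|$, and $m$ for the number of matched pairs produced by the left-to-right matching on $\word_I^n(J)$, so $\theta_I^n(J)=s+m$. Recall that $B=\{b_1<\cdots<b_r\}\subseteq[n]$ is a basis of $\SM_n(I)$ iff $b_i\leq s_i$ for every $i$, equivalently iff $|B\cap[k]|\geq|I\cap[k]|$ for every $k\in[n]$, with equality at $k=n$.

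For the lower bound, I exhibit an explicit basis $B$ achieving $|B\cap J|=\theta_I^n(J)$. Let $B$ consist of $I\cap J$ (the $\star$-positions), the position of each matched $($, and the position of each unmatched $)$ in $\word_I^n(J)$. Counting contributions shows $|B|=|I|$, since matched closes are swapped one-for-one for matched opens. Writing $B$ as $I$ with each matched close $b$ replaced by its partner $a<b$, each such swap raises $|B\cap[k]|-|I\cap[k]|$ by one on $[a,b-1]$ and by zero elsewhere; since matched pairs are non-crossing, the total difference is pointwise nonnegative, so $B$ is a basis. By construction, $|B\cap J|=s+m=\theta_I^n(J)$.

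For the upper bound, fix any basis $B$ and set $U=B\setminus I$, $D=I\setminus B$, so $|U|=|D|$ and $|U\cap[k]|\geq|D\cap[k]|$ for every $k$. Sorting $U=\{a_1<a_2<\cdots\}$ and $D=\{d_1<d_2<\cdots\}$ and setting $\phi(a_i)=d_i$ produces a bijection $\phi\colon U\to D$ with $\phi(a)>a$ for every $a\in U$: a violation $d_i\leq a_i$ would force $|D\cap[d_i]|\geq i>i-1\geq|U\cap[d_i]|$, contradicting the prefix-sum inequality. Decompose
\[
B\cap J=(B\cap I\cap J)\sqcup(U\cap J),
\]
and split $U\cap J$ by whether $\phi(a)\in I\setminus J$ (a close) or $\phi(a)\in I\cap J$ (a star). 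Pairs of the first kind form a disjoint matching of opens with strictly later closes in $\word_I^n(J)$, so their count is at most $m$. Pairs of the second kind inject into $(I\cap J)\setminus B$, so their count is at most $s-|B\cap I\cap J|$. Summing,
\[
|B\cap J|\leq|B\cap I\cap J|+m+\bigl(s-|B\cap I\cap J|\bigr)=s+m=\theta_I^n(J).
\]

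The main obstacle is the first-kind bound: any collection of disjoint pairs $(a_i,b_i)$ with $a_i\in J\setminus I$, $b_i\in I\setminus J$, and $a_i<b_i$ has size at most $m$. This is handled by an uncrossing argument (sort the $a_i$'s; whenever two pairs cross, swap their $b$'s), which reduces any such collection to a non-crossing matching of opens with later closes in $\word_I^n(J)$; non-crossing matchings of this form have size $\leq m$, achieved by the greedy left-to-right matching.
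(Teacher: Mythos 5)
The paper does not actually prove this statement; it is imported from \cite{fms2018}*{Theorem 10} with no internal proof, so the only thing to assess is whether your argument stands on its own. It essentially does. The lower bound is correct and clean: your set $B$ (stars, matched opens, unmatched closes) has cardinality $|I|$; writing it as $I$ with each matched close replaced by its strictly earlier partner shows $|B\cap[k]|\geq|I\cap[k]|$ for every $k$ (each individual swap already contributes nonnegatively to every prefix count, so the appeal to non-crossingness is not even needed), hence $B$ is a basis, and $|B\cap J|=s+m$ by construction. The upper bound is also correct: the prefix inequality forces $\phi(a)>a$ (using that $U$ and $D$ are disjoint to rule out equality), the decomposition $B\cap J=(B\cap I\cap J)\sqcup(U\cap J)$ is valid, second-kind pairs inject into $(I\cap J)\setminus B$, and the arithmetic closes up to $s+m=\theta_I^n(J)$.

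The one genuine soft spot is the lemma you flag as the main obstacle: that any family of disjoint pairs, each an open matched to a strictly later close, has size at most the greedy count $m$. Your route (uncross, then compare to greedy) still leaves unproved the claim that non-crossing matchings have size at most $m$, so as written the key inequality is asserted rather than established. It is a standard fact, and a direct argument avoids uncrossing altogether: writing $o(k)$ and $c(k)$ for the numbers of opens and closes in positions $1,\dots,k$, any such family pairs each used close in $[1,k]$ with an open in $[1,k]$, so at least $c(k)-o(k)$ closes in every prefix are unused; hence at least $\max_k\bigl(c(k)-o(k)\bigr)$ closes are unused overall, and this maximum is exactly the number of closes the left-to-right greedy matching leaves unmatched (the last unmatched close occurs when the stack of pending opens is empty). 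Substituting that for the uncrossing paragraph makes your proof complete.
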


Let $S\subseteq[n]$ be a one-column diagram with a single distinguished square $s\in S$. Set $S^{(0,s)}\colonequals S$, and whenever $[s]\not\subseteq S^{(k-1,s)}$, define $S^{(k,s)}$ from $S^{(k-1,s)}$ by 
\[
S^{(k,s)} = S^{(k-1,s)}\cup \max\{i\colon i < s \text{ and } i \not\in S^{(k-1,s)}\}.
\]
Let $d = s - |\{i\in S\colon i \leq s\}|$ so that $[s]\subseteq S^{(d,s)}$.
\begin{lem}
\label{lem:rank-functions-explicit}
For all $k$, we have 
\[
r_{\SM_n(S^{(k+1,s)})}(J) - r_{\SM_n(S^{(k,s)})}(J) \in \{0,1\}.
\]
When $r_{\SM_n(S^{(k+1,s)})}(J) = r_{\SM_n(S^{(k,s)})}(J)$, then we have 
\begin{align*}
r_{\SM_n(S^{(k'+1,s)})}(J) &= r_{\SM_n(S^{(k',s)})}(J) \qquad \textup{ for all } k' > k,\textup{ and } \\
r_{\SM_n(S^{(k+1,s)})}(J') &= r_{\SM_n(S^{(k,s)})}(J') \qquad \textup{ for all } J' \subset J.
\end{align*}
\end{lem}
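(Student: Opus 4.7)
The plan is to reduce the whole lemma to a single min-cut formula for $r_{\SM_n(I)}(J)$. The rank equals the maximum matching in the interval bipartite graph with left vertices $J$, right vertices $I$, and an edge $\{j,s\}$ whenever $j \leq s$; the order-preservation built into the definition of $\SM_n(I)$ is automatic by a swap argument. König's theorem together with the observation that a minimum vertex cover may be taken of threshold form $(J \cap [1,t-1]) \cup (I \cap [t,n])$ then yields
\[
r_{\SM_n(I)}(J) = \min_{t \in [1,n+1]} \phi_I(J,t), \qquad \phi_I(J,t) \colonequals |J \cap [1,t-1]| + |I \cap [t,n]|.
\]
Writing $\phi^{(k)}(J,t) \colonequals \phi_{S^{(k,s)}}(J,t)$ and $i^* \colonequals \max\{i < s : i \notin S^{(k,s)}\}$ for the element added at step $k+1$, the driving computational fact is $\phi^{(k+1)}(J,t) - \phi^{(k)}(J,t) = \mathbbm{1}[t \leq i^*]$.

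For part (a), I partition the minimum into $A \colonequals \min_{t \leq i^*} \phi^{(k)}(J,t)$ and $B \colonequals \min_{t > i^*} \phi^{(k)}(J,t)$, so that $r_{\SM_n(S^{(k,s)})}(J) = \min(A,B)$ and $r_{\SM_n(S^{(k+1,s)})}(J) = \min(A+1,B)$. The difference therefore lies in $\{0,1\}$ and equals $0$ precisely when $A \geq B$.

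For part (b2), suppose $A \geq B$ and fix $t^* > i^*$ attaining $B$. For every $t \leq i^*$, the hypothesis $\phi^{(k)}(J,t) \geq A \geq B = \phi^{(k)}(J,t^*)$ rearranges to
\[
|J \cap [t, t^*-1]| \leq |S^{(k,s)} \cap [t, t^*-1]|.
\]
Replacing $J$ by $J' \subseteq J$ only weakly decreases the left-hand side, so the same inequality persists, giving $\phi^{(k)}(J',t^*) \leq \phi^{(k)}(J',t)$ for all $t \leq i^*$. Hence $t^*$ again witnesses $A(J') \geq B(J')$, proving $\Delta^{(k)}(J') = 0$.

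For part (b1), let $i^{**} < i^*$ be the element added at step $k+2$, and set $A' \colonequals \min_{t \leq i^{**}} \phi^{(k)}(J,t)$ and $B' \colonequals \min_{i^{**} < t \leq i^*} \phi^{(k)}(J,t)$, so that $A = \min(A', B')$. The analogous splitting applied to the transition from step $k+1$ to $k+2$ gives $A^{(k+1)} = A' + 1$ and $B^{(k+1)} = \min(B'+1, B)$. The hypothesis $A \geq B$ forces $A' \geq B$ and $B' \geq B$, whence
\[
A^{(k+1)} = A' + 1 \geq B + 1 > B \geq \min(B'+1, B) = B^{(k+1)},
\]
so $\Delta^{(k+1)}(J) = 0$. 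Iterating yields the conclusion for all $k' > k$. The main conceptual content of the argument is really the min-cut formula; once that is in hand, each of (a), (b1), and (b2) is a short calculation about how $\min(A, B)$ transforms under increments of $\phi$.
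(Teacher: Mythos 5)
Your proof is correct, and it takes a genuinely different route from the paper's. The paper works directly with the word $\word_I^n(J)$ and the formula $r_{\SM_n(I)}(J)=\theta_I^n(J)$ (matched $()$ pairs plus $\star$'s): when passing from $S^{(k,s)}$ to $S^{(k+1,s)}$ one character of the word flips (a $($ becomes $\star$, or a $\underline{\hspace{0.2cm}}$ becomes $)$), and each claim of the lemma is then argued by a case analysis on that flip and on how it interacts with the existing parenthesis matching. You instead pass to the dual (min-cut) description $r_{\SM_n(I)}(J)=\min_t\phi_I(J,t)$ with $\phi_I(J,t)=|J\cap[1,t-1]|+|I\cap[t,n]|$; this is equivalent to the $\theta$-formula (setting $h(t)=\#\{(\text{'s before }t\}-\#\{)\text{'s before }t\}$, one has $\phi_I(J,t)-|I\cap J|=\#\{)\text{'s}\}+h(t)$, whose minimum over $t$ is exactly the number of matched pairs), or alternatively follows from K\"onig as you say. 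Once the min-cut formula is in hand, all three parts of the lemma become min-plus arithmetic: the element added at step $k+1$ increments $\phi$ exactly on the range $t\leq i^*$, so the rank transforms as $\min(A,B)\mapsto\min(A+1,B)$, and everything else is transparent. This is a clean reorganization; it replaces the paper's ``every $($ left of position $i$ is already matched'' observation with the single inequality $A\geq B$ and avoids separate cases for $i\in J$ versus $i\notin J$. The one step you state without proof is that minimum vertex covers in this threshold bipartite graph may be taken of threshold form $(J\cap[1,t-1])\cup(I\cap[t,n])$; this is routine (take $t_0=\min(J\setminus C)$ for a minimum cover $C$, or $t_0=n+1$ if $J\subseteq C$, and check $C\supseteq C_{t_0}$), but worth a sentence in a final write-up.
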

\begin{proof}
Suppose that $S^{(k+1,s)}\setminus S^{(k,s)} = \{i\}$. If $i \in J$, then $\word_{S^{(k+1,s)}}^n(J)$ is obtained from $\word_{S^{(k,s)}}^n(J)$ by replacing the $($ in the $i$-th position of $\word_{S^{(k,s)}}^n(J)$ with a $\star$, while if $i\not \in J$, then $\word_{S^{(k+1,s)}}^n(J)$ is obtained from $\word_{S^{(k,s)}}^n(J)$ by replacing the $\underline{\hspace{0.2cm}}$ in the $i$-th position of $\word_{S^{(k,s)}}^n(J)$ with a $)$. In either case, $r_{\SM_n(S^{(k+1,s)})}(J) - r_{\SM_n(S^{(k,s)})}(J) \in \{0,1\}$.

Now, fix $k' > k$, and write $S^{(k'+1,s)}\setminus S^{(k',s)} = \{i'\}$. Note that $i' < i$.

Suppose that $r_{\SM_n(S^{(k+1,s)})}(J) = r_{\SM_n(S^{(k,s)})}(J)$. By considering separately the cases $i \in J$ and $i\not\in J$, one can deduce that every $($ in the $\ell$-th position of $\word_{S^{(k,s)}}^n(J)$, $\ell \leq i$, is matched to a $)$. Thus:
\begin{itemize}
\item If $i' \in J$, then replacing a $($ with a $\star$ decreases the number of matched $()$'s by one while increasing the number of $\star$'s by one. Thus, $r_{\SM_n(S^{(k'+1,s)})}(J) = r_{\SM_n(S^{(k',s)})}(J)$. 

\item If $i'\not\in J$, then replacing the $\underline{\hspace{0.2cm}}$ with a $)$ in the $i'$-th position of $\word_{S^{(k',s)}}^n(J)$ does not increase the number of matched $()$'s, as every $($ to the left of position $i'$ was already matched. Thus, $r_{\SM_n(S^{(k'+1,s)})}(J) = r_{\SM_n(S^{(k',s)})}(J)$.

\end{itemize}
Now fix $J'\subset J$. Note that for any set $T$, $\word_T^n(J')$ is obtained from $\word_T^n(J)$ by replacing, for every $j\in (J\setminus J')\cap T$, the $\star$ in the $j$-th position with a $)$ and, for every $j\in (J\setminus J')\setminus T$, the $($ in the $j$-th position with a $\underline{\hspace{0.2cm}}$.

Suppose that $r_{\SM_n(S^{(k+1,s)})}(J) = r_{\SM_n(S^{(k,s)})}(J)$. We know that every $($ in the $\ell$-th position of $\word_{S^{(k,s)}}^n(J)$, $\ell \leq i$, is matched to a $)$. Because $\word_T^n(J')$ is obtained from $\word_T^n(J)$ by replacing $\star$'s with $)$'s and $($'s with $\underline{\hspace{0.2cm}}$'s, every $($ in the $\ell$-th position of $\word_{S^{(k,s)}}^n(J')$, $\ell \leq i$, is matched to a $)$. Thus:
\begin{itemize}
\item If $i \in J'$, then replacing a $($ with a $\star$ decreases the number of matched $()$'s by one while increasing the number of $\star$'s by one. Thus, $r_{\SM_n(S^{(k+1,s)})}(J') = r_{\SM_n(S^{(k,s)})}(J')$. 

\item If $i\not\in J'$, then replacing the $\underline{\hspace{0.2cm}}$ in the $i$-th position of $\word_{S^{(k,s)}}^n(J')$ with a $)$ does not change the number of matched $()$'s, as every $($ to the left of position $i$ was already matched. Thus, $r_{\SM_n(S^{(k+1,s)})}(J') = r_{\SM_n(S^{(k,s)})}(J')$. 
\end{itemize}
\end{proof}
\begin{cor}
\label{cor:rank-functions-explicit}
The Schubert matroid rank function of $S^{(k,s)}$ is given by
\[
r_{\SM_n(S^{(k,s)})}(I) = \min\{r_{\SM_n(S^{(d,s)})}(I), r_{\SM_n(S)}(I)+k\} \qquad\textup{ for all } I\subseteq[n].
\]
Furthermore, if $J'\subseteq J$, then
\[
r_{\SM_n(S^{(d,s)})}(J) - r_{\SM_n(S^{(d,s)})}(J') \geq r_{\SM_n(S)}(J) - r_{\SM_n(S)}(J').
\]
\end{cor}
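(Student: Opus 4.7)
My plan is to derive both statements as direct bookkeeping consequences of the three properties already established in Lemma~\ref{lem:rank-functions-explicit}. For any $I \subseteq [n]$, let $a_k(I) \colonequals r_{\SM_n(S^{(k,s)})}(I)$. The first property says the sequence $a_0(I), a_1(I), \dots, a_d(I)$ has increments in $\{0,1\}$; the second says that once an increment is $0$, all subsequent increments are $0$; and the third says that a zero increment at step $k$ for $J$ forces a zero increment at step $k$ for every $J' \subset J$.

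For the first statement, I would observe that by the first two properties the sequence $a_0(I), a_1(I), \dots, a_d(I)$ must look like
\[
a_0(I),\, a_0(I)+1,\, a_0(I)+2,\, \dots,\, a_0(I)+k^*,\, a_0(I)+k^*,\, \dots,\, a_0(I)+k^*
\]
for some $0 \leq k^* \leq d$, i.e.\ it strictly increases by one until it stabilizes, and then remains constant. In particular $a_d(I) = a_0(I)+k^*$, and for every $k \in \{0,\dots,d\}$,
\[
a_k(I) = \min\{a_0(I)+k,\, a_d(I)\},
\]
which is exactly the stated formula after rewriting $a_0(I) = r_{\SM_n(S)}(I)$ and $a_d(I) = r_{\SM_n(S^{(d,s)})}(I)$.

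For the second statement, I would use a telescoping/counting argument. Writing
\[
r_{\SM_n(S^{(d,s)})}(J) - r_{\SM_n(S)}(J) = \sum_{k=0}^{d-1}\bigl(a_{k+1}(J) - a_k(J)\bigr),
\]
the right-hand side simply counts the indices $k$ for which the sequence for $J$ strictly increases at step $k$, and similarly for $J'$. The contrapositive of the third property of Lemma~\ref{lem:rank-functions-explicit} says that whenever the sequence for $J'$ strictly increases at step $k$, so must the sequence for $J$. Hence the set of ``increment indices'' for $J'$ is contained in the set of ``increment indices'' for $J$, giving
\[
r_{\SM_n(S^{(d,s)})}(J) - r_{\SM_n(S)}(J) \;\geq\; r_{\SM_n(S^{(d,s)})}(J') - r_{\SM_n(S)}(J'),
\]
which rearranges to the stated inequality. (If one reads $\subset$ strictly in the lemma, I would apply it iteratively by removing elements of $J \setminus J'$ one at a time.)

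There is no real obstacle here: Lemma~\ref{lem:rank-functions-explicit} has already done all the combinatorial work with the $\word$-words and the parenthesis matching, so the corollary reduces to the two short arguments above. The only point requiring any care is the routine translation between ``the sequence $a_k(I)$ plateaus'' and the $\min$ formula, and between ``subset-monotonicity of zero increments'' and the stated difference inequality.
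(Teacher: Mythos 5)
Your proof is correct and follows essentially the same route as the paper's: the $\min$ formula comes from the "increase by one until you plateau" structure forced by the first two parts of Lemma~\ref{lem:rank-functions-explicit}, and the difference inequality comes from the containment of increment indices given by the third part. The paper states these steps a bit more tersely, but the content is identical.
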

\begin{proof}
By Lemma~\ref{lem:rank-functions-explicit}, we know $r_{\SM_n(S^{(k+1,s)})}(J) - r_{\SM_n(S^{(k,s)})}(J) \in \{0,1\}$. This implies
\[
r_{\SM_n(S^{(k,s)})}(J)\leq\min\{r_{\SM_n(S^{(d,s)})}(I), r_{\SM_n(S)}(I)+k\}.
\]
Furthermore, Lemma~\ref{lem:rank-functions-explicit} implies that:
\begin{itemize}
\item If $r_{\SM_n(S^{(k+1,s)})}(J) - r_{\SM_n(S^{(k,s)})}(J) = 0$, then $r_{\SM_n(S^{(k'+1,s)})}(J) - r_{\SM_n(S^{(k',s)})}(J) = 0$ for all $k' > k$, so $r_{\SM_n(S^{(k,s)})}(J) = r_{\SM_n(S^{(d,s)})}(J)$.

\item If $r_{\SM_n(S^{(k+1,s)})}(J) - r_{\SM_n(S^{(k,s)})}(J) = 1$, then $r_{\SM_n(S^{(k'+1,s)})}(J) - r_{\SM_n(S^{(k',s)})}(J) = 1$ for all $k' < k$, so $r_{\SM_n(S^{(k,s)})}(J) = r_{\SM_n(S)}(J) + k$.
\end{itemize}
If $J'\subseteq J$, then Lemma~\ref{lem:rank-functions-explicit} implies that
\[
r_{\SM_n(S^{(d,s)})}(J) - r_{\SM_n(S)}(J) \geq r_{\SM_n(S^{(d,s)})}(J') - r_{\SM_n(S)}(J').
\]
Rearranging the inequality gives the desired result.
\end{proof}
For $B \leq S^{(k,s)}$, let $\widetilde \zeta^B = (\zeta^B_1, \dots, \zeta^B_n,d-k)\in\RR^{n+1}$ be the vector with $\zeta^B_i = 1$ if $i \in B$ and $\zeta^B_i = 0$ if $i\not\in B$. Define the polytope
\[
\mathcal P(S^{(s)}) \overset{\rm def}=\mathrm{conv}\{\widetilde\zeta^B\colon B\leq S^{(k,s)}\textup{ for some } k\leq d\}.
\]
\begin{prop}
\label{prop:one-column-M-convexity}
The polytope $\mathcal P(S^{(s)})$ is a generalized permutahedron, and 
\[
\mathcal P(S^{(s)})\cap\ZZ^{n+1} = \{\widetilde\zeta^B\colon B\leq S^{(k,s)}\textup{ for some } k\leq d\}.
\]
\end{prop}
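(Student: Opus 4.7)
The plan is to exhibit an explicit submodular function $z\colon 2^{[n+1]} \to \RR$, define $P_z$ to be its associated generalized permutahedron, and argue $\mathcal P(S^{(s)}) = P_z$ and that $P_z \cap \ZZ^{n+1}$ equals the prescribed set of $\widetilde\zeta^B$'s. Specifically, I would set
\[
z(I) = \begin{cases} r_{\SM_n(S^{(d,s)})}(I) & \textup{if } n+1 \notin I, \\ r_{\SM_n(S)}(I\setminus\{n+1\}) + d & \textup{if } n+1 \in I. \end{cases}
\]
Then $z(\emptyset) = 0$ and $z([n+1]) = |S|+d$ matches the common coordinate sum of every $\widetilde\zeta^B$. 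Submodularity of $z$ when $I,J$ both contain (or both omit) $n+1$ follows from submodularity of the relevant Schubert matroid rank function; the mixed case, with $n+1 \in I$, $n+1 \notin J$, and $I' = I \setminus\{n+1\}$, reduces to combining submodularity of $r_{\SM_n(S)}$ with the inequality
\[
r_{\SM_n(S^{(d,s)})}(J) - r_{\SM_n(S^{(d,s)})}(I' \cap J) \geq r_{\SM_n(S)}(J) - r_{\SM_n(S)}(I' \cap J),
\]
which is exactly the second assertion of Corollary~\ref{cor:rank-functions-explicit}.

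Next I would verify $\mathcal P(S^{(s)}) \subseteq P_z$ by checking $\sum_{i\in I} \widetilde\zeta^B_i \leq z(I)$ for every $B \leq S^{(k,s)}$ and every $I \subseteq [n+1]$. When $n+1 \notin I$, this is immediate from the monotonicity $r_{\SM_n(S^{(k,s)})} \leq r_{\SM_n(S^{(d,s)})}$; when $n+1 \in I$, it follows from the rank bound $r_{\SM_n(S^{(k,s)})}(I\setminus\{n+1\}) \leq r_{\SM_n(S)}(I\setminus\{n+1\}) + k$ of Corollary~\ref{cor:rank-functions-explicit}, adding the last-coordinate contribution $d-k$.

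The heart of the argument is showing that every integer point $p \in P_z$ equals $\widetilde\zeta^B$ for some $B \leq S^{(k,s)}$. Standard generalized-permutahedron bounds $p_i \leq z(\{i\})$ and $p_i \geq z([n+1]) - z([n+1]\setminus\{i\})$ force $p_i \in \{0,1\}$ for $i \in [n]$ and $p_{n+1} \in \{0,\dots,d\}$. Setting $k = d - p_{n+1}$ and $B = \{i\in[n] : p_i = 1\}$, it remains to prove $|B \cap [t]| \geq |S^{(k,s)} \cap [t]|$ for all $t$. The plan is to derive two lower bounds on $p([t])$: applying $p(I) \leq z(I)$ with $I = [t+1,n]\cup\{n+1\}$ yields $p([t]) \geq |S \cap [t]|$, and with $I = [t+1,n]$ yields $p([t]) \geq |S^{(d,s)} \cap [t]| - (d-k)$. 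Writing $S^{(k,s)} = S \sqcup \{t_1 > \cdots > t_k\}$ where the $t_j$ are the rightmost indices below $s$ missing from $S$, a dichotomy on whether $j^* := \min\{j : t_j \leq t\}$ satisfies $j^* \leq k$ or $j^* > k$ shows one of the two bounds always delivers $p([t]) \geq |S^{(k,s)} \cap [t]|$. Once this is established, $p = \widetilde\zeta^B$; since $z$ is integer-valued, $P_z$ is a lattice polytope, so $P_z = \mathrm{conv}(P_z \cap \ZZ^{n+1}) = \mathcal P(S^{(s)})$ is a generalized permutahedron, and both assertions of the proposition follow. The most delicate piece is the case analysis in this final paragraph, and Corollary~\ref{cor:rank-functions-explicit} is what makes the two competing lower bounds cover all possibilities.
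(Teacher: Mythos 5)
Your proposal is correct and follows the paper's proof almost exactly: the same submodular function $z$, the same three-case submodularity check via Corollary~\ref{cor:rank-functions-explicit}, and the same two containments. The only (minor) divergence is the final integer-point step, where the paper invokes the identity $r_{\SM_n(S^{(k,s)})}(I) = \min\{r_{\SM_n(S^{(d,s)})}(I),\, r_{\SM_n(S)}(I)+k\}$ from Corollary~\ref{cor:rank-functions-explicit} to conclude that $(t_1,\dots,t_n)$ lies in the Schubitope $\mathcal S_{S^{(k,s)}}$, whereas you verify the dominance $B\leq S^{(k,s)}$ directly from the inequalities for the tail sets $[t+1,n]$ and $[t+1,n]\cup\{n+1\}$; your case analysis on $j^*$ checks out and is an equivalent, slightly more hands-on route to the same conclusion.
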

\begin{proof}
Consider the function $z\colon2^{[n+1]}\to\RR$ defined by
\[
z(I) = \begin{cases} r_{\SM_n(S^{(d,s)})}(I) &\textup{ if } n+1\not\in I\\
r_{\SM_n(S)}(I\setminus\{n+1\})+d &\textup{ if } n+1\in I.\end{cases}
\]
We claim that $z$ is submodular. Indeed:
\begin{itemize}
\item If $n+1\not\in I,J$, then $z(I) + z(J)\geq z(I\cup J) + z(I\cap J)$ because $r_{\SM_n(S^{(d,s)})}$ is submodular,

\item If $n+1 \in I\setminus J$, then
\begin{align*}
z(I) + z(J) &= r_{\SM_n(S)}(I\setminus \{n+1\}) + d + r_{\SM_n(S^{(d,s)})}(J) \\
&\geq r_{\SM_n(S)}(I\setminus\{n+1\}) + d + r_{\SM_n(S)}(J) - r_{\SM_n(S)}(I\cap J) + r_{\SM_n(S^{(d,s)})}(I\cap J)\\
&\geq d + r_{\SM_n(S)}(I\cup J\setminus\{n+1\}) + r_{\SM_n(S^{(d,s)})}(I\cap J)\\
&=z(I\cup J) + z(I\cap J),
\end{align*}
where the first inequality uses Corollary~\ref{cor:rank-functions-explicit} applied to $r_{\SM_n(S^{(d,s)})}(J)$ and the second inequality uses the submodular inequality $r_{\SM_n(S)}(J) - r_{\SM_n(S)}(I\cap J) \geq r_{\SM_n(S)}(I\cup J\setminus\{n+1\}) - r_{\SM_n(S)}(I\setminus\{n+1\})$.

\item If $n+1 \in I,J$, then $z(I) + z(J) \geq z(I\cup J) + z(I\cap J)$ because $r_{\SM_n(S)}$ is submodular.
\end{itemize}
Since $z$ is submodular, we have a generalized permutahedron
\[
P = \left\{t\in\RR^{n+1}\colon\sum_{i\in I}t_i\leq z(I) \textup{ for all } I\subseteq[n+1] \textup{ and } \sum_{i=1}^{n+1}t_i = z([n+1])\right\}.
\]
We now claim that $\mathcal P(S^{(s)}) = P$. To prove this, fix any $B\leq S^{(k,s)}$ and $I\subseteq[n+1]$. If $n+1\not\in I$, then
\[
\sum_{i\in I}\zeta^B_i \leq r_{\SM_n(S^{(k,s)})}(I) \leq r_{\SM_n(S^{(d,s)})}(I),
\]
and if $n+1\in I$, then
\begin{align*}
\left(\sum_{i\in I\setminus\{n+1\}}\zeta^B_i\right) + d-k &\leq r_{\SM_n(S^{(k,s)})}(I\setminus\{n+1\}) + d-k \\&\leq r_{\SM_n(S)}(I\setminus\{n+1\}) + d,
\end{align*}
where we use the inequality $r_{\SM_n(S^{(k,s)})}(I) - k \leq r_{\SM_n(S)}(I)$ from Corollary~\ref{cor:rank-functions-explicit}. Furthermore,
\[
\sum_{i\in[n]}\zeta^B_i + d-k = (|S| + k) + d-k = z([n+1]),
\]
so $\widetilde\zeta^B \in P$. We conclude that $\mathcal P(S^{(s)})\subseteq P$.

Now fix any $t \in P\cap\ZZ^{n+1}$. Observe that $z([n]) = z([n+1])$, so $t_{n+1} \geq 0$. Furthermore, $z(\{n+1\}) = d$, so $t_{n+1}\leq d$. Write $t_{n+1} = d - k$. Observe that for any $I\subseteq[n]$, we have
\[
\sum_{i\in I}t_i \leq z(I) = r_{\SM_n(S^{(d,s)})}(I) \qquad\textup{ and } \qquad \sum_{i\in I}t_i \leq z(I\cup\{n+1\}) - t_{n+1} = r_{\SM_n(S)}(I\setminus\{n+1\}) + k,
\]
so that
\[
\sum_{i\in I}t_i\leq \min\{r_{\SM_n(S^{(d,s)})}(I), r_{\SM_n(S)}(I\setminus\{n+1\}) + k\} = r_{\SM_n(S^{(k,s)})}(I).
\]
In particular, $(t_1, \dots, t_n)$ is an integer point of the Schubitope $\mathcal S_{S^{(k,s)}}$. It follows that $(t_1, \dots, t_n) = \zeta^B$ for some $B\leq S^{(k,s)}$; hence, $t = \widetilde\zeta^B$. 
Thus, $P\cap\ZZ^{n+1} = \{\widetilde\zeta^B\colon B\leq S^{(k,s)}\textup{ for some } k\}$ and $P\supseteq \mathcal P(S^{(s)})$. We conclude that $P = \mathcal P(S^{(s)})$ and that $\mathcal P(S^{(s)})\cap\ZZ^{n+1} = P\cap\ZZ^{n+1} = \{\widetilde\zeta^B\colon B\leq S^{(k,s)}\}$.
\end{proof}
Let $f^\top_i \colonequals \#F^\top(w)_i$, and write $f^\top = (f^\top_1, \dots, f^\top_n)$. For $f\leq f^\top$, let $D^f(w) = (D^f(w)_1, \dots, D^f(w)_n)$ be the diagram with
\[
D^f(w)_k = (D(w)_k)^{(f_k, i_k)} \qquad\textup{ where } (i_k,k) \in A(w).
\]
\begin{lem}
\label{lem:SBD-explicit}
We have
\[
\{D\colon \textup{there exist $r, F$ so that } (D,r,F) \in\SBD(w)\} = \{D\colon D\leq D^{f}(w) \textup{ for some } f\leq f^\top\}.
\]
\end{lem}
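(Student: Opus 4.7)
The plan is to reduce the lemma to a per-column statement using the column-independence structure of $\SBD(w)$ noted just before the statement, and then handle the forward and reverse inclusions separately for one column at a time. Recall that $\SBD(w)$ is generated from $\mathcal{D}(w)$ by two moves: bubble any live square and K-bubble any distinguished live square. Since $A(w)$ contains at most one square per column and no two of its squares are linked, Lemma~\ref{lem:propagating-linkedness} implies that no two distinguished live squares of any $\mathcal D \in \SBD(w)$ are linked; hence two dead squares in the same row cannot have equal rank (that would force the two corresponding column-distinguished live squares to be linked), so the rank-clash condition of Definition~\ref{defn:bubbling-diagram} is automatic. Consequently $\SBD(w)$ decouples as a product over columns, and it suffices to prove the column-wise identity: fixing $(i_k,k) \in A(w)$ (and setting $f^\top_k = 0$, $(D(w)_k)^{(0,i_k)} = D(w)_k$ if no distinguished square lies in column $k$), the set of column configurations $D_k$ reachable from $D(w)_k$ by the two moves equals $\{C : C \leq (D(w)_k)^{(g,i_k)} \textup{ for some } 0 \leq g \leq f^\top_k\}$.

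For the forward direction, given $\mathcal D \in \SBD(w)$, I set $f_k = |F_k|$. The bound $f_k \leq f^\top_k$ is immediate from the construction of $\mathcal D^\top(w)$, which performs the maximum possible number of K-bubbles in each column. The inequality $D_k \leq (D(w)_k)^{(f_k,i_k)}$ is proved by induction on the length of a bubbling sequence from $\mathcal D(w)$ to $\mathcal D$, using the equivalent characterization $R \leq S \iff |R\cap[i]| \geq |S\cap[i]|$ for all $i$. A bubbling move increases $|D_k\cap[i]|$ by one at exactly one threshold $i$ and preserves the inequality. A K-bubbling move from row $p$ to $p-1$ inserts $p-1$ into $D_k$ while simultaneously advancing the comparison target from $(D(w)_k)^{(g,i_k)}$ to $(D(w)_k)^{(g+1,i_k)}$, which picks up the largest currently-empty row below $i_k$; the legality condition that row $p-1$ is empty in the current column pairs up precisely with this newly-added row, preserving the inequality $|D_k\cap[i]| \geq |(D(w)_k)^{(g+1,i_k)}\cap[i]|$ for every $i$.

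For the reverse direction, given $D_k \leq (D(w)_k)^{(g,i_k)}$ with $g \leq f^\top_k$, I build a realizing sequence in two phases. First, perform the greedy sequence that bubbles live squares above $i_k$ just enough to clear $g$ empty rows immediately above the distinguished square and then K-bubbles $g$ times, landing on $(D(w)_k)^{(g,i_k)}$ with known dead-square positions. Second, use the order-preserving matching of the sorted rows of $(D(w)_k)^{(g,i_k)}$ and $D_k$ provided by $\leq$ to designate which $g$ rows of $D_k$ become dead, and bubble each remaining live square up from its $(D(w)_k)^{(g,i_k)}$ position to its $D_k$ counterpart in top-to-bottom order. The main obstacle is in this second phase: I must verify that the matching identifies the dead rows consistently---in particular, that after deleting the matched dead rows from both sorted sequences the resulting live-row subsequences are still componentwise $\leq$---and that the top-to-bottom bubbling order keeps each intermediate move legal. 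Both points should follow by direct checks: the live subsequence inherits the $\leq$ relation from the overall matching, and processing from top to bottom guarantees that each bubbling move has its required empty slot available at the moment it is applied.
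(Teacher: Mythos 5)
Your reduction to a per-column statement matches the decomposition the paper itself records right after Theorem~\ref{thm:supp-Gw-SBD}, and your justification (via Lemma~\ref{lem:propagating-linkedness}) that the rank-clash condition of Definition~\ref{defn:bubbling-diagram} is automatic once the columns of $\SBD(w)$ are treated separately is correct. From there the two arguments diverge: the paper does not induct along a bubbling sequence. For the forward inclusion it reads $D_k\setminus F_k\leq D(w)_k$ off the $\mathcal D(w)$-admissibility criterion of Lemma~\ref{lem:BD-characterization} and combines this with the observation that the dead squares are exactly the $(m_k+1)$-th through $(m_k+f_k)$-th rows of $D_k$, all at or above row $i_k$, to conclude $D_k\leq (D^f(w))_k$; for the reverse inclusion it writes down $(D,r,F)$ explicitly and verifies admissibility, again via Lemma~\ref{lem:BD-characterization}, rather than engineering a realizing move sequence.

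The K-bubble step of your forward induction is where the gap lies. You assert that the legality condition ``row $p-1$ is empty'' pairs up precisely with the row $e_{g+1}$ that $(D(w)_k)^{(g+1,i_k)}$ acquires, so that $|D_k\cap[i]|\geq|(D(w)_k)^{(g+1,i_k)}\cap[i]|$ is automatically preserved. That reasoning covers only the case $p-1\leq e_{g+1}$: when $e_{g+1}<p-1$, the right-hand side increases by one on the interval $[e_{g+1},p-2]$ while the left-hand side does not, so you would need the inductive hypothesis to have held with \emph{strict} inequality there. This case does occur --- take $D(w)_k=\{2,3,5\}$ with $i_k=5$: K-bubble at $5$, then bubble $2\to1$ and $3\to2$ to reach $D_k'=\{1,2,4,5\}$ with $g=1$ and the distinguished square at row $4$, so the next K-bubble has $p=4$ but $e_2=1<p-1=3$. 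The required strict slack does in fact hold (one can derive it from $i_k-p\geq g$ together with the fact that the $m_k-1$ live squares above the distinguished one all lie in $[1,p-2]$), but that is an additional argument you do not supply. The reverse direction has the same problem in sharper form: the assertion that Phase~1 lands exactly on $(D(w)_k)^{(g,i_k)}$, and the assertions that Phase~2's top-to-bottom bubbling is legal at every step and that deleting the matched dead rows keeps the live subsequences componentwise $\leq$, are precisely the substance of that direction and cannot be deferred to ``direct checks.''
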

\begin{proof}
Let $(D,r,F)\in\SBD(w)$. Define $(f_1, \dots, f_n)$ by $f_k\colonequals \#F_k = \#D_k - \#D(w)_k$. Because $F\subseteq F^\top(w)$, we know that $f\leq f^\top$. Suppose that $(i_k,k)\in A(w)$ is the $m_k$-th highest square in $D(w)_k$. Writing $d_i$ for the $i$-th highest square in $D_k$, we have $F_k = \{d_{m_k+1} < \dots < d_{m_k+f_k}\}$ with $d_{m+f_k} \leq i_k$. Since $D\in\BD(w)$, Lemma~\ref{lem:BD-characterization} implies that $D_k\setminus F_k \leq D(w)_k$. It follows that $D_k\leq (D^f(w))_k$, and by varying $k$, we deduce that $D\leq D^f(w)$.

Now suppose that $D\leq D^f(w)$. As above, suppose that $(i_k,k) \in A(w)$ is the $m_k$-th highest square in $D(w)_k$, and write $d_i$ for the $i$-th highest square in $D_k$. Let $F_k\colonequals \{d_{m_k+1} < \dots < d_{m_k+f_k}\}$. Then $D\setminus F\leq D(w)$, and furthermore, $d_{m_k+f_k}\leq i_k$. The live square immediately above any square in $F_k$ is $d_{m_k}\in D_k\setminus F_k$, and no two squares in $A(w)$ are linked. Thus, Lemma~\ref{lem:BD-characterization} implies that there exists $r$ so that $(D,r,F)\in\BD(w)$.
\end{proof}
For $D\leq D^f(w)$, let $\widetilde{\wt}(D)\in\ZZ^{n+1}$ denote the vector whose $i$-th coordinate counts the number of squares in the $i$-th row of $D$ for $i \leq n$ and whose $(n+1)$-th coordinate is $\deg(\mathfrak G_w) - |D|$.

Recall that $\widetilde{\mathfrak G}_w$ denotes the \textbf{homogenized Grothendieck polynomial}
\[
\widetilde{\mathfrak G}_w(x_1, \dots, x_n, z) \colonequals \sum_{k=\ell(w)}^{\deg(\mathfrak G_w)}\mathfrak G_w^{(k)}(x_1, \dots, x_n)z^{\deg(\mathfrak G_w) - k}
\]

\newtheorem*{thm:main3}{Theorem~\ref{thm:main3}}
\begin{thm:main3}
Let $w\in S_n$ be a vexillary permutation. Then, the homogenized Grothendieck polynomial $\widetilde{\mathfrak G}_w$ has M-convex support. In particular, each degree component $\mathfrak G_w^{(k)}$ has M-convex support.
\end{thm:main3}

\begin{proof}[Proof of Theorem~\ref{thm:main3}]
By Theorem~\ref{thm:supp-Gw-SBD} and Lemma~\ref{lem:SBD-explicit}, we know that 
\begin{align*}
\supp(\widetilde{\mathfrak G}_w) &= \left\{\widetilde{\wt}(D)\colon D\leq D^f(w) \textup{ for some } f\leq f^\top\right\} \\
&=\sum_{k=1}^n\left\{\widetilde\zeta^{D_k} \colon D_k\leq (D(w)_k)^{(f_k,i_k)}\textup{ for some } f_k \leq f^\top_k\right\}.
\end{align*}
Thus 
\[
\Newton(\widetilde{\mathfrak G}_w) = \sum_{k=1}^n\mathcal P((D(w)_k)^{(i_k)})
\]
is a generalized permutahedron. Furthermore, \cite{schrijver2003}*{Corollary 46.2c} implies that any $t \in \Newton(\widetilde{\mathfrak G_w})\cap\ZZ^{n+1}$ can be written as a sum
\[
t = t_1 + \dots + t_n, \qquad t_i \in \mathcal P((D(w)_k)^{(i_k)}) \cap\ZZ^{n+1}.
\]
Since $t_i = \widetilde\zeta^{D_i}$, we conclude that $t = \widetilde{\wt}(D)\in\supp(\widetilde{\mathfrak G}_w)$ for $D = (D_1, \dots, D_n)$, so $\widetilde{\mathfrak G}_w$ has SNP.

M-convexity of $\supp(\mathfrak G_w^{(k)})$ follows from the equality
\[
\supp(\mathfrak G_w^{(k)}) = \supp(\widetilde{\mathfrak G}_w)\cap\{t\in\RR^{n+1}\colon t_{n+1} = \deg(\mathfrak G_w) - k\}.
\]
\end{proof}

\section{Linear independence of Schubert matroid rank functions}
\label{sec:linear-independence}
We prove Theorem~\ref{thm:main4} and use it to show that our results are sharp.
\begin{defn}
For each $n$, denote by $V_n$ the set of all nonempty subsets of $[n]$ with the following total order: if $I,J\in V_n$, then $I\prec J$ if
\[
\max(I\setminus J)\leq\max(J\setminus I),
\]
where we take $\max(\emptyset)\colonequals 0$.
\end{defn}
\begin{example}
$V_4$ is the chain
\begin{align*}
\{1\}&\prec\{2\}\prec\{1,2\}\prec\{3\}\prec\{1,3\}\prec\{2,3\}\prec\{1,2,3\}\\
&\prec\{4\}\prec\{1,4\}\prec\{2,4\}\prec\{1,2,4\}\prec\{3,4\}\prec\{1,3,4\}\prec\{2,3,4\}\prec\{1,2,3,4\}.
\end{align*}
Note that $V_{n-1}$ is an initial segment of $V_n$.
\end{example}
\begin{defn}
For each $n\geq 1$, define $A_n$ to be the $(2^n-1)\times(2^n-1)$ matrix
\[
A_n = (r_{\SM_n(I)}(J))_{I,J\in V_n}.
\]
\end{defn}
\begin{example}
For $n = 3$ and $n = 4$, we have 
\[A_3=\left(\begin{array}{*{7}c}
1 & 0 & 1 & 0 & 1 & 0 & 1 \\
1 & 1 & 1 & 0 & 1 & 1 & 1 \\
1 & 1 & 2 & 0 & 1 & 1 & 2 \\
1 & 1 & 1 & 1 & 1 & 1 & 1 \\
1 & 1 & 2 & 1 & 2 & 1 & 2 \\
1 & 1 & 2 & 1 & 2 & 2 & 2 \\
1 & 1 & 2 & 1 & 2 & 2 & 3 \\
\end{array}\right),
\quad
A_4=\left(\begin{array}{*{15}c}
1 & 0 & 1 & 0 & 1 & 0 & 1 & 0 & 1 & 0 & 1 & 0 & 1 & 0 & 1 \\
1 & 1 & 1 & 0 & 1 & 1 & 1 & 0 & 1 & 1 & 1 & 0 & 1 & 1 & 1 \\
1 & 1 & 2 & 0 & 1 & 1 & 2 & 0 & 1 & 1 & 2 & 0 & 1 & 1 & 2 \\
1 & 1 & 1 & 1 & 1 & 1 & 1 & 0 & 1 & 1 & 1 & 1 & 1 & 1 & 1 \\
1 & 1 & 2 & 1 & 2 & 1 & 2 & 0 & 1 & 1 & 2 & 1 & 2 & 1 & 2 \\
1 & 1 & 2 & 1 & 2 & 2 & 2 & 0 & 1 & 1 & 2 & 1 & 2 & 2 & 2 \\
1 & 1 & 2 & 1 & 2 & 2 & 3 & 0 & 1 & 1 & 2 & 1 & 2 & 2 & 3 \\
1 & 1 & 1 & 1 & 1 & 1 & 1 & 1 & 1 & 1 & 1 & 1 & 1 & 1 & 1 \\
1 & 1 & 2 & 1 & 2 & 1 & 2 & 1 & 2 & 1 & 2 & 1 & 2 & 1 & 2 \\
1 & 1 & 2 & 1 & 2 & 2 & 2 & 1 & 2 & 2 & 2 & 1 & 2 & 2 & 2 \\
1 & 1 & 2 & 1 & 2 & 2 & 3 & 1 & 2 & 2 & 3 & 1 & 2 & 2 & 3 \\
1 & 1 & 2 & 1 & 2 & 2 & 2 & 1 & 2 & 2 & 2 & 2 & 2 & 2 & 2 \\
1 & 1 & 2 & 1 & 2 & 2 & 3 & 1 & 2 & 2 & 3 & 2 & 3 & 2 & 3 \\
1 & 1 & 2 & 1 & 2 & 2 & 3 & 1 & 2 & 2 & 3 & 2 & 3 & 3 & 3 \\
1 & 1 & 2 & 1 & 2 & 2 & 3 & 1 & 2 & 2 & 3 & 2 & 3 & 3 & 4 \\
\end{array}\right).\]
Because $V_{n-1}$ is an initial segment of $V_n$, the upper left justified $(2^{n-1}-1)\times(2^{n-1}-1)$ submatrix of $A_n$ is equal to $A_{n-1}$.
\end{example}
We would like to show that the columns of $A_n$ are linearly independent. To do this, we will use symmetries of $A_n$ which relate blocks of $A_n$ with $A_{n-1}$. We first give a motivating example.
\begin{example}
Take $A_4$ as above. For each $I\in V_3$, subtract row $I$ from row $I\cup\{n\}$ to get
\begin{center}
\includegraphics[scale=0.9]{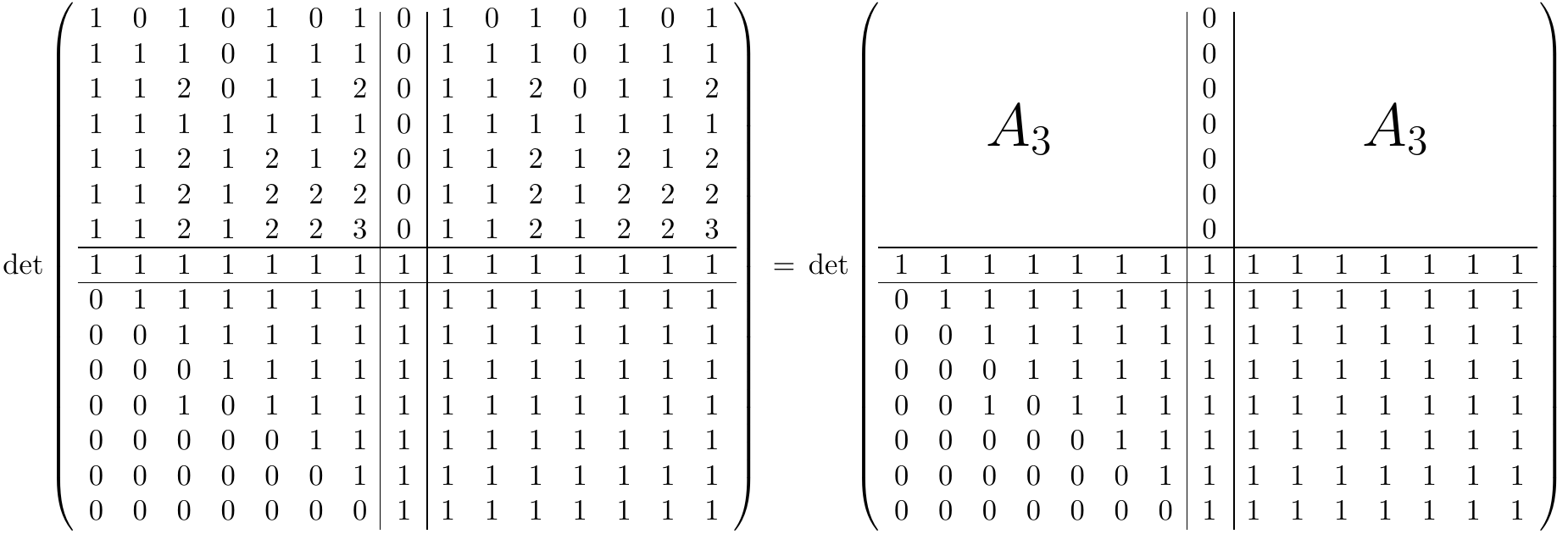}
\end{center}
\end{example}
\begin{lem}
\label{lem:schubert-matroid-rank-1}
The row and column of $A_n$ indexed by $\{n\}$ are given by
\[
r_{\SM_n(\{n\})}(J) = 1 \qquad\textup{ and } \qquad r_{\SM_n(I)}(\{n\}) = \begin{cases} 1 &\textup{ if } n \in I\\ 0 &\textup{ if } n \not\in I\end{cases}
\]
respectively.
\end{lem}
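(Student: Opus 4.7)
The plan is to prove both identities by direct appeal to the definition of Schubert matroids rather than passing through the $\theta_I^n$ formula, since each statement reduces to a one-line observation about which bases exist.

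For the row claim, observe that $\SM_n(\{n\})$ has rank $1$ and its bases are precisely the singletons $\{a\}$ with $a \leq n$, i.e.\ all of $\{1\},\{2\},\ldots,\{n\}$. So for any nonempty $J \subseteq [n]$, on the one hand $\#(J\cap B)\leq |B|=1$ for every basis $B$, and on the other hand, picking any $a\in J$ gives the basis $B=\{a\}$ with $\#(J\cap B)=1$. Therefore $r_{\SM_n(\{n\})}(J)=1$, as every $J\in V_n$ is nonempty by definition of $V_n$.

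For the column claim, fix $I=\{i_1<\cdots<i_r\}\subseteq[n]$. By definition the bases of $\SM_n(I)$ are the sets $\{a_1<\cdots<a_r\}$ with $a_k\leq i_k$; in particular every basis $B$ satisfies $\max(B)\leq i_r$. If $n\notin I$, then $i_r<n$, so no basis contains $n$, hence $\#(\{n\}\cap B)=0$ for all bases $B$ and $r_{\SM_n(I)}(\{n\})=0$. If instead $n\in I$, then necessarily $i_r=n$, and the set $B=\{1,2,\ldots,r-1,n\}$ satisfies $a_k=k\leq i_k$ for $k<r$ and $a_r=n=i_r$, so $B$ is a basis with $\#(\{n\}\cap B)=1$. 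Combined with the trivial bound $\#(\{n\}\cap B)\leq 1$, this gives $r_{\SM_n(I)}(\{n\})=1$.

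There is no real obstacle here; the only minor point to keep in mind is verifying that the candidate basis $\{1,\ldots,r-1,n\}$ in the second case actually satisfies all the defining inequalities $a_k\leq i_k$, which follows from $k\leq i_k$ (a consequence of $i_1<\cdots<i_r$ with $i_k\in[n]$). The whole argument is two or three lines and avoids invoking the parenthesis-matching interpretation.
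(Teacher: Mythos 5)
Your proof is correct, but it takes a different route from the paper. The paper proves this lemma (and its companion, Lemma~\ref{lem:schubert-matroid-rank-2}) entirely through the combinatorial formula $r_{\SM_n(I)}(J) = \theta_I^n(J)$, writing out the words $\word_{\{n\}}^n(J)$ and $\word_I^n(\{n\})$ explicitly and counting stars and matched parentheses; this keeps the whole of Section~\ref{sec:linear-independence} in a single formalism, which pays off in the more delicate cases of Lemma~\ref{lem:schubert-matroid-rank-2} where the word description is genuinely easier to manipulate. You instead go straight to the definition of the rank function $r_M(S)=\max\{\#(S\cap B)\colon B\in\mathcal B\}$ and the explicit description of the bases of $\SM_n(s_1,\dots,s_r)$, which is more elementary and self-contained: the row claim is just that every singleton is a basis of $\SM_n(\{n\})$ and every $J\in V_n$ is nonempty, and the column claim follows from $\max(B)\le\max(I)$ for every basis $B$ together with the explicit basis $\{1,\dots,r-1,n\}$ when $n\in I$ (your check that $k\le i_k$, and implicitly $r-1<n$, is exactly the point that needs verifying, and it holds). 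Both arguments are complete; yours avoids the parenthesis-matching machinery at the cost of not matching the style of the surrounding proofs.
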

\begin{proof}
It is straightforward to check that
\[
\word_{\{n\}}^n(J) = \begin{cases} ( \dots ( \star &\textup{ if } n \in J\\ ( \dots () &\textup{ if } n\not\in J\end{cases}\quad\textup{ and } \quad \word_I^n(\{n\}) = \begin{cases} ) \dots )\star &\textup{ if } n\in I\\ ) \dots )( &\textup{ if } n\not\in I\end{cases}.
\]
\end{proof}
\begin{lem}
\label{lem:schubert-matroid-rank-2}
Let $I,J\in V_n$. The rank functions $r_{\SM_n(I)}$ satisfy the following properties:
\begin{enumerate}
\item If $n\not\in I$ and $n\not\in J$, then $r_{\SM_n(I)}(J) = r_{\SM_{n-1}(I)}(J)$;
\item If $n\in I$ and $n\not\in J$, then $r_{\SM_n(I)}(J) = r_{\SM_n(I)}(J\setminus\{n\})$;
\item If $n\in I$ and $n\in J$, then $r_{\SM_n(I)}(J) = r_{\SM_n(I\setminus\{n\})}(J\setminus\{n\})+1$;
\item If $n\in I$, $n\not\in J$, and $I\setminus\{n\}\prec J$, then $r_{\SM_n(I)}(J) = r_{\SM_n(I\setminus\{n\})}(J) + 1$;
\item If $I\succ\{n\}$, then $r_{\SM_n(I)}(I\setminus\{n\}) = r_{\SM_n(I\setminus\{n\})}(I\setminus\{n\})$.
\end{enumerate}
\end{lem}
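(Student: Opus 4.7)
The plan is to prove each property by direct case analysis on the single symbol at position $n$ of $\word_I^n(J)$, invoking the identity $r_{\SM_n(I)}(J) = \theta_I^n(J)$ from \cite{fms2018}*{Theorem 10}. In every part, changing the membership of $n$ in $I$ or $J$ (or the ambient $[n]$) modifies only position $n$ of the word while preserving positions $1,\ldots,n-1$. The whole lemma thus reduces to inspecting how a single trailing symbol ($\underline{\hspace{0.2cm}}$, $($, $)$, or $\star$) affects the $\theta$-count, and in the $)$ case whether an unmatched $($ is available.

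For (1), position $n$ is $\underline{\hspace{0.2cm}}$ and a trailing blank changes neither matched pairs nor stars, so $\theta_I^n(J) = \theta_I^{n-1}(J)$. Property (2) is immediate since $J\setminus\{n\} = J$ when $n\notin J$. For (3), position $n$ of $\word_I^n(J)$ is $\star$, contributing $+1$ to $\theta$; the first $n-1$ symbols coincide with $\word_{I\setminus\{n\}}^{n-1}(J\setminus\{n\})$, and applying (1) to $I\setminus\{n\}$ and $J\setminus\{n\}$ identifies the remaining contribution with $r_{\SM_n(I\setminus\{n\})}(J\setminus\{n\})$. For (5), the hypothesis $I \succ \{n\}$ forces $n\in I$, and with $J = I\setminus\{n\}$ one has $J\setminus I = \emptyset$; hence $\word_I^n(J)$ contains no $($ symbols, its trailing $)$ at position $n$ has nothing to pair with, and $\theta_I^n(J) = |I| - 1$. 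The word $\word_{I\setminus\{n\}}^n(J)$ similarly has no parentheses and the same $|I|-1$ stars, yielding the same value.

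The main technical step is (4), where $\word_I^n(J)$ agrees with $\word_{I\setminus\{n\}}^n(J)$ in positions $1,\ldots,n-1$ and differs only at position $n$ ($)$ versus $\underline{\hspace{0.2cm}}$). The task is to exhibit an unmatched $($ in the prefix for the new $)$ to pair with. Setting $p \colonequals \max(J\setminus I)$ and $q \colonequals \max((I\setminus J)\setminus\{n\})$, the hypothesis $I\setminus\{n\} \prec J$, interpreted strictly (equality would force $J = I\setminus\{n\}$ and contradict the claimed $+1$), yields $q < p$. Every $)$ in $\word_{I\setminus\{n\}}^n(J)$ sits at a position in $(I\setminus J)\setminus\{n\}$, hence at position $\leq q < p$, so no $)$ lies to the right of the $($ at position $p$. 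That $($ is therefore unmatched in the greedy left-to-right matching, and the trailing $)$ at position $n$ pairs with it, adding exactly one matched pair, which is the required increment.
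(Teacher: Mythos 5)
Your proof is correct and takes essentially the same approach as the paper's: both arguments reduce every part to how the single symbol at position $n$ of $\word_I^n(J)$ affects $\theta_I^n(J)$, and your key step in (4) --- the unmatched $($ at position $\max(J\setminus I)$ sitting to the right of every $)$ --- is exactly the paper's. The only divergence is part (2), which as printed is vacuous (indeed $J\setminus\{n\}=J$); your literal reading is valid for the statement as given, but the paper's own proof establishes the evidently intended hypothesis $n\notin I$, $n\in J$ (appending a trailing $($ leaves $\theta$ unchanged), which is the version actually used in the block decomposition in Proposition~\ref{prop:linear-independence-of-ranks}, so that non-vacuous case should be recorded as well.
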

\begin{proof}
If $n\not\in I$ and $n\not\in J$, then $\word_I^n(J) = \word_I^{n-1}(J)$. Thus, $r_{\SM_n(I)}(J) = r_{\SM_{n-1}(I)}(J)$.

If $n\not\in I$ and $n\in J$, then $\word_I^n(J)$ is obtained by appending a $($ to the end of $\word_I^n(J\setminus\{n\})$. Doing so does not change the number of $\star$s or paired $()$s, so $r_{\SM_n(I)}(J) = r_{\SM_n(I)}(J\setminus\{n\})$.

If $n\in I$ and $n\in J$, then $\word_I^n(J)$ is obtained by appending a $\star$ to the end of $\word_{I\setminus\{n\}}^n(J\setminus\{n\})$, so $r_{\SM_n(I)}(J) = r_{\SM_n(I\setminus\{n\})}(J\setminus\{n\})+1$.

If $n\in I$ and $n\not\in J$, then $\word_I^n(J)$ is obtained by appending a $)$ to the end of $\word_{I\setminus\{n\}}^n(J)$. On the other hand, if $I\setminus\{n\}\prec J$, then $\max(I\setminus\{n\}\setminus J) < \max(J\setminus I)$; thus, $\word_{I\setminus\{n\}}^n(J)$ contains an unmatched left parenthesis to the right of all closed parentheses. Combined, we deduce $r_{\SM_n(I)}(J) = r_{\SM_n(I\setminus\{n\})}(J) + 1$.

If $I\succ\{n\}$, then $\word_I^n(I\setminus\{n\})$ is obtained by appending a $)$ to the end of $\word_{I\setminus\{n\}}^n(I\setminus\{n\}) = \star\dots\star$. Thus, $r_{\SM_n(I)}(I\setminus\{n\}) = r_{\SM_n(I\setminus\{n\})}(I\setminus\{n\})$.
\end{proof}

\begin{prop}
\label{prop:linear-independence-of-ranks}
The Schubert matroid rank functions $r_{\SM_n(I)}$ are linearly independent.
\end{prop}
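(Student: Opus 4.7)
The approach is induction on $n$, with the base case $n=1$ (where $A_1 = (1)$) being immediate. For the inductive step, assuming $A_{n-1}$ is invertible, I decompose $V_n$ as the disjoint union $V_{n-1} \sqcup \{\{n\}\} \sqcup \{I' \cup \{n\} : I' \in V_{n-1}\}$, giving $A_n$ a $3 \times 3$ block structure with rows and columns partitioned in this way.

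Lemmas~\ref{lem:schubert-matroid-rank-1} and~\ref{lem:schubert-matroid-rank-2} directly identify eight of the nine blocks. The top-left block equals $A_{n-1}$ (by item (1) of Lemma~\ref{lem:schubert-matroid-rank-2}); the top-middle column vanishes (since $n \notin I$ for $I \in V_{n-1}$); the top-right block is again $A_{n-1}$ (by items (2) and (1)); the middle row and the lower two entries of the middle column consist entirely of $1$'s by Lemma~\ref{lem:schubert-matroid-rank-1}; and the bottom-right block equals $A_{n-1} + \mathbf{J}$, where $\mathbf{J}$ is the all-ones matrix (by items (3) and (1)). Only the bottom-left block, indexed by $(I' \cup \{n\}, J)$ with $I', J \in V_{n-1}$, requires further analysis.

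For this remaining block, I would prove the auxiliary identity
\[
r_{\SM_n(I' \cup \{n\})}(J) - r_{\SM_{n-1}(I')}(J) \;=\; [I' \not\geq_G J],
\]
where $[\,\cdot\,]$ denotes the Iverson bracket and $\geq_G$ is the Gale dominance order on subsets of $[n-1]$: $A \geq_G B$ iff $|A| \geq |B|$ and, after sorting each set decreasingly, $A_k \geq B_k$ for $k = 1, \dots, |B|$. The key observation is that $\word_{I' \cup \{n\}}^n(J)$ is obtained from $\word_{I'}^{n-1}(J)$ by appending a single $)$, so the matched-pair count increases by exactly one precisely when $\word_{I'}^{n-1}(J)$ contains an unmatched $($. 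A Hall-type argument on the bipartite matching sending each $($ (at a position in $J \setminus I'$) to a later $)$ (at a position in $I' \setminus J$) shows that all $($'s can be matched if and only if $|I' \cap (t, n-1]| \geq |J \cap (t, n-1]|$ for every threshold $t$, which is equivalent to $I' \geq_G J$.

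Write the bottom-left block as $A_{n-1} + \Delta$ with $\Delta_{I', J} = [I' \not\geq_G J]$. Subtracting row $I'$ from row $I' \cup \{n\}$ for each $I' \in V_{n-1}$, and then subtracting column $J'$ from column $J' \cup \{n\}$ for each $J' \in V_{n-1}$, produces a block lower-triangular matrix whose diagonal blocks are $A_{n-1}$, the scalar $(1)$, and $M$, where $M_{I', J'} = 1 - [I' \not\geq_G J'] = [I' \geq_G J']$. The first diagonal block is invertible by the inductive hypothesis and the second trivially; since $\geq_G$ is a partial order on $V_{n-1}$, ordering both indices by any linear extension of $\geq_G$ makes $M$ unitriangular and hence invertible. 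This proves $A_n$ is invertible, completing the induction. The main technical point is the auxiliary identification of $\Delta$ with the Gale-order indicator, which rests on the Hall-type parenthesis-matching argument.
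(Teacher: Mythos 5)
Your proposal is correct, and the first half coincides with the paper's argument: the same first round of row operations (subtracting row $I'$ from row $I'\cup\{n\}$) and the same identification of the blocks via Lemmas~\ref{lem:schubert-matroid-rank-1} and~\ref{lem:schubert-matroid-rank-2} (note that item~(2) of the latter should read ``$n\notin I$ and $n\in J$,'' as its proof makes clear, and you use it in that corrected sense). Where you genuinely diverge is in handling the one nontrivial block. The paper never computes the bottom-left block explicitly; instead it performs a second round of row operations (consecutive differences among the rows indexed by $V_n\setminus(V_{n-1}\cup\{n\})$, in the order $\prec$), using items~(4) and~(5) of Lemma~\ref{lem:schubert-matroid-rank-2} to produce a second block structure, and then finishes by chasing the coefficients of a putative column dependence. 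You instead prove the closed-form identity $r_{\SM_n(I'\cup\{n\})}(J)-r_{\SM_{n-1}(I')}(J)=[I'\not\geq_G J]$ via the suffix-counting (Hall-type) criterion for when the appended $)$ finds an unmatched $($ — this is correct, and it is exactly the step that replaces items~(4) and~(5). The payoff is a cleaner endgame: after the symmetric column operations the third diagonal block is the incidence matrix $\bigl([I'\geq_G J']\bigr)$ of the Gale order, which is unitriangular under any linear extension, so invertibility of $A_n$ follows at once from the inductive hypothesis. Your route costs one extra auxiliary lemma but yields a more structural explanation (and, as a bonus, recovers $\det(A_n)=1$ immediately, which the paper only remarks on afterward); the paper's route avoids introducing the Gale order at the price of a more ad hoc coefficient-chasing finish.
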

\begin{proof}
We will show that the columns of $A_n$ are linearly independent. First, let $A_n'$ denote the matrix obtained from $A_n$ by subtracting row $I$ from row $I\cup\{n\}$ for each $I \in V_{n-1}$. Lemmas~\ref{lem:schubert-matroid-rank-1} and~\ref{lem:schubert-matroid-rank-2} imply that $A_n'$ has a block decomposition
\begin{center}
\includegraphics{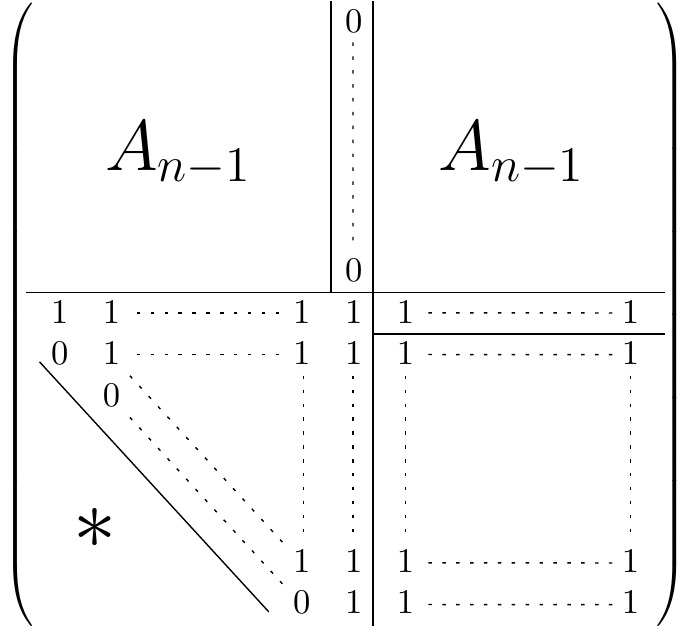}
\end{center}
Let $A_n''$ denote the matrix obtained from $A_n'$ by subtracting each row in $V_n\setminus (V_{n-1}\cup\{n\})$ from the row above it, working from the top row to the bottom row. Then $A_n''$ has a block decomposition
\begin{center}
\includegraphics{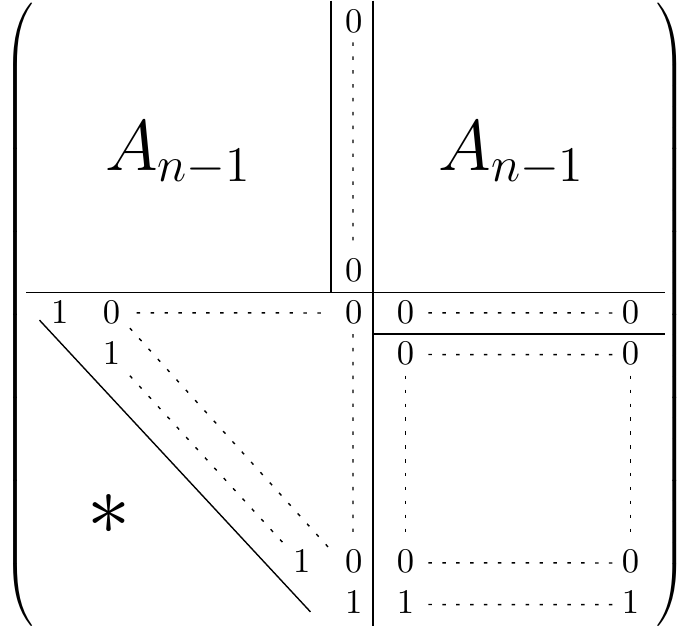}
\end{center}
Let $v_J$ denote the column vector of $A_n''$ indexed by $J\in V_n$, and suppose that 
\begin{equation}
\label{eqn:linear-independence-of-ranks}
\tag{$\diamondsuit$}
\sum_{J\in V_n}c_Jv_J = 0
\end{equation}
is a linear dependence between the vectors $v_J$. The columns of $A_{n-1}$ are linearly independent, so $c_J + c_{J\cup\{n\}} = 0$ for all $J \in V_{n-1}$. Furthermore, comparing coordinates of the dependence~\eqref{eqn:linear-independence-of-ranks} corresponding to $V_n\setminus (V_{n-1}\cup\{n\})$, working from the smallest element to the largest element, gives that $c_J = 0$ for all $J\in V_{n-1}$. It follows that $c_{J\cup\{n\}} = 0$ for all $J\in V_{n-1}$ as well. Thus, the linear dependence~\eqref{eqn:linear-independence-of-ranks} reads $c_{\{n\}}v_{\{n\}} = 0$, and since $v_{\{n\}} \neq 0$, it follows that $c_{\{n\}} = 0$.

We conclude that the columns of $A_n''$, and hence the columns of $A_n$, are independent.
\end{proof}
\newtheorem*{thm:main4}{Theorem~\ref{thm:main4}}
\begin{thm:main4}
Fix $n\geq 1$. The rank functions $r_{\SM_n(I)}$ of Schubert matroids form a basis of the vector space of functions $f\colon 2^{[n]} \to \RR$ satisfying $f(\emptyset) = 0$. In particular,
\begin{itemize}
\item A generalized permutahedron is a Schubitope if and only if its associated submodular function is a $\ZZ_{\geq 0}$-linear combination of rank functions of Schubert matroids, and

\item Two Schubitopes $\mathcal S_D$ and $\mathcal S_{D'}$ are equal if and only if $D$ can be obtained from $D'$ by a permutation of columns.
\end{itemize}
\end{thm:main4}
\begin{proof}[Proof of Theorem~\ref{thm:main4}]
The vector space of functions $f\colon 2^{[n]}\to\RR$ satisfying $f(\emptyset) = 0$ is $(2^n - 1)$-dimensional and contains the $2^n - 1$ functions $r_{\SM_n(I)}$. Proposition~\ref{prop:linear-independence-of-ranks} guarantees that these functions are linearly independent, so they form a basis.

Let $D = (D_1, \dots, D_k)$ be a collection of columns. The submodular function of the Schubitope $\mathcal S_D$ is given by $r_{\SM_n(D_1)} + \dots + r_{\SM_n(D_k)}$; in particular, it is a $\ZZ_{\geq 0}$-linear combination of Schubert matroid rank functions.

Lemma~\ref{lem:gp-from-rank} guarantees that a generalized permutahedron $P$ is uniquely determined by its submodular function $z$. Because Schubitopes are generalized permutahedra, an arbitrary generalized permutahedron $P$ is equal to a Schubitope $\mathcal S_D$ if and only if the submodular function $z$ defining $P$ is a $\ZZ_{\geq 0}$-linear combination of rank functions of Schubert matroid polytopes.

Combined with the linear independence of rank functions of Schubert matroid polytopes, it also follows that two Schubitopes $\mathcal S_D$ and $\mathcal S_{D'}$ are equal if and only if $D$ can be obtained from $D'$ by a permutation of columns.
\end{proof}
\begin{rem}
One can show that $\det(A_n) = 1$, so the Schubert matroid rank functions in fact form a $\ZZ$-basis for the space of functions $f\colon 2^{[n]}\to\ZZ$ with $f(\emptyset) = 0$.
\end{rem}

The following examples provide counterexamples to natural generalizations of Theorems~\ref{thm:main2} and \ref{thm:main3}.
\begin{example}
\label{exp:main2counter}
Consider the non-vexillary permutation $w = 2168534(10)79 \in S_{10}$. We show that the Newton polytope of $\mathfrak G_w^\top$ is not a Schubitope. 
The defining inequalities of $\Newton(\mathfrak G_w^\top)$ show it is a generalized permutahedron. Its submodular function $z$ expands in the basis of Schubert matroid rank functions as
\begin{align*}
	z = r_{\SM_n(\{1\})} - r_{\SM_n(\{2,3,4\})} + 2 r_{\SM_n(\{1,2,3,4\})} + r_{\SM_n(\{3,4,5\})} \\
	\phantom{aaaaaaaa}+r_{\SM_n(\{1,2,3,4,5\})} + r_{\SM_n(\{2,3,4,8\})} + r_{\SM_n(\{1,2,3,4,5,6,7,8\})}.
\end{align*}
Because there is a negative coefficient in this expansion, Theorem~\ref{thm:main4} implies that \linebreak$\Newton(\mathfrak G_w^\top)$ is not a Schubitope.
\end{example}
\begin{example}
\label{exp:main3counter}
Let $w = 14253 \in S_5$. We show that the Newton polytope of $\mathfrak G_w^{(\ell(w) + 1)}$ is not a Schubitope. 
Since $w$ is vexillary, Theorem~\ref{thm:main3} implies $\Newton(\mathfrak G_w^{(\ell(w) + 1)})$ is a generalized permutahedron. Its submodular function $z$ expands in the basis of Schubert matroid rank functions as
\[
	z = r_{\SM_n(\{1,2\})} + r_{\SM_n(\{2,4\})} - r_{\SM_n(\{1,2,4\})} + r_{\SM_n(\{2,3,4\})}.
\]
Because there is a negative coefficient in this expansion, Theorem~\ref{thm:main4} implies that \linebreak$\Newton(\mathfrak G_w^{(\ell(w)+1)})$ is not a Schubitope.
\end{example}

Based on the previous two examples, we conclude with the following conjecture, a generalization of Theorem \ref{thm:main2}.

\newtheorem*{conj:1}{Conjecture \ref{conj:1}}
\begin{conj:1}
	If $w \in S_n$ is vexillary, then $\mathfrak G_w^\top$ is an integer multiple of $\chi_{D^\top(w)}$.
\end{conj:1}
We tested Conjecture \ref{conj:1} for all vexillary $w\in S_n$, $n \leq 9$.

\begin{bibdiv}
\begin{biblist}
\bib{fms2018}{article}{
 author={Fink, Alex},
 author={M\'esz\'aros, Karola},
 author={St.\ Dizier, Avery},
 title={Schubert polynomials as integer point transforms of generalized permutahedra},
 journal={Adv.\ Math.},
 volume={332},
 date={2018},
 pages={465--475}
}
\bib{frank2011}{book}{
 author={Frank, Andr\'as},
 title={Connections in Combinatorial Optimization},
 series={Oxford Lecture Series in Mathematics and its Applications},
 volume={38},
 publisher={Oxford University Press},
 date={2011} 
}
\bib{hmms2022}{article}{
	author={Huh, June},
	author={Matherne, Jacob P.},
	author={M\'esz\'aros, Karola},
	author={St.~Dizier, Avery},
	title={Logarithmic concavity of {S}chur and related polynomials},
	journal={Trans. Amer. Math. Soc.},
	volume={375},
	date={2022},
	pages={4411--4427}
}
\bib{ms2020}{article}{
	author={M{\'es}z{\'a}ros, Karola},
	author={St.~Dizier, Avery},
	title={From generalized permutahedra to {G}rothendieck polynomials via flow polytopes},
	journal={Algebr. Comb.},
	volume={3},
	date={2020},
	pages={1197--1229},
	number={5}
}
\bib{ey2017}{article}{
	author={Escobar, Laura},
	author={Yong, Alexander},
	title={Newton polytopes and symmetric Grothendieck polynomials},
	journal={C. R. Math. Acad. Sci. Paris},
	volume={355},
	date={2017},
	pages={831--834},
	number={8}
}
\bib{hafner2022}{article}{
 author={Hafner, Elena S.},
 title={Vexillary Grothendieck Polynomials via Bumpless Pipe Dreams},
 eprint={arXiv:2201.12432},
 date={2022}
}
\bib{ccmm2022}{article}{
 author={Castillo, Federico},
 author={Cid-Ruiz, Yairon},
 author={Mohammadi, Fatemeh},
 author={Monta\~no, Jonathan},
 title={K-polynomials of multiplicity-free varieties},
 date={2022},
 eprint={arXiv:2212.13091}
}
\bib{ls1982}{article}{
 author={Lascoux, Alain},
 author={Sch\"utzenberger, Marcel-Paul},
 title={Structure de Hopf de l'anneau de cohomologie et de l'anneau de Grothendieck d'une vari\'et\'e de drapeaux},
 journal={C.\ R.\ Acad.\ Sc.\ Paris},
 volume={295},
 date={1982},
 number={11},
 pages={629--633}
}
\bib{mty2019}{article}{
 author={Monical, Cara},
 author={Tokcan, Neriman},
 author={Yong, Alexander},
 title={Newton polytopes in algebraic combinatorics},
 journal={Selecta.\ Math.},
 volume={25},
 number={66},
 date={2019},
}
\bib{mssd2022}{article}{
 author={M\'esz\'aros, Karola},
 author={Setiabrata, Linus},
 author={St.\ Dizier, Avery},
 title={On the support of Grothendieck polynomials},
 eprint={arXiv:2201.09452},
 date={2022}
}
\bib{ps2022}{article}{
 author={Pechenik, Oliver},
 author={Satriano, Matthew},
 title={Proof of a conjectured M\"obius inversion formula for Grothendieck polynomials},
 eprint={arXiv:2202.02897},
 date={2022}
}
\bib{psw2021}{article}{
 author={Pechenik, Oliver},
 author={Speyer, David E.},
 author={Weigandt, Anna},
 title={Castelnuovo-Mumford regularity of matrix Schubert varieties},
 eprint={arXiv:2111.10681},
 date={2021}
}
\bib{schrijver2003}{book}{
 author={Schrijver, Alexander},
 title={Combinatorial Optimization. Polyhedra and efficiency.},
 series={Algorithms and Combinatorics},
 volume={24},
 publisher={Springer-Verlag, Berlin},
 date={2003} 
}
\bib{weigandt2021}{article}{
 author={Weigandt, Anna},
 title={Bumpless pipe dreams and alternating sign matrices},
 journal={J.\ Combin.\ Theory, Ser.\ A},
 volume={182},
 date={2021},
 number={105470}
}
\bib{ry2015}{article}{
	author={Ross, Colleen},
	author={Yong, Alexander},
	title={Combinatorial rules for three bases of polynomials},
	journal={S\'{e}m. Lothar. Combin.},
	volume={74},
	date={2015},
	pages={Art. B74a, 11},
}
\bib{ls2021}{article}{
	author={Lam, Thomas},
	author={Lee, Seung Jin},
	author={Shimozono, Mark}
	title={Back stable Schubert calculus},
	journal={Compos. Math.},
	volume={157},
	date={2021},
	pages={883–962}
}

\bib{PanYu23}{article}{
author={Pan, Jianping},
author={Yu, Tianyi},
title={Top-degree components of Grothendieck and Lascoux polynomials},
year={2023},
      eprint={arXiv: 2306.04159}
}

\bib{Yu23}{article}{
author={Yu, Tianyi},
title={Connection between Schubert polynomials and top Lascoux polynomials},
year={2023},
      eprint={arXiv: 2302.03643}
}

\end{biblist}
\end{bibdiv}
\end{document}